\def\b{\textbf{b}}
\def\L{\mathcal{L}}
\newtheorem{theorem}{Theorem}[section]
\newtheorem{conjecture}[theorem]{Conjecture}
\newtheorem{corollary}[theorem]{Corollary}
\newtheorem{definition}[theorem]{Definition}
\newtheorem{proposition}[theorem]{Proposition}
\newtheorem{remark}[theorem]{Remark}
\newtheorem{examples}[theorem]{Examples}
\begin{document}
\title{The Braid Indices of Pretzel Links: \\
A Comprehensive Study, Part II}
\author{Yuanan Diao$^\dagger$, Claus Ernst$^*$ and Gabor Hetyei$^\dagger$}
\address{$^\dagger$ Department of Mathematics and Statistics\\
University of North Carolina Charlotte\\
Charlotte, NC 28223}
\address{$^*$ Department of Mathematics\\
Western Kentucky University\\
Bowling Green, KY 42101, USA}
\email{}
\subjclass[2010]{Primary: 5725; Secondary: 5727}
\keywords{knots, links, braid index, alternating links, Seifert graph, Morton-Frank-Williams inequality, pretzel link, Montesinos link.}

\begin{abstract}
This paper is the second part of our comprehensive study on the braid index problem of pretzel links. Our ultimate goal is to completely determine the braid indices of all pretzel links, alternating or non alternating. In our approach, we divide the pretzel links into three types as follows. Let $D$ be a standard diagram of an oriented pretzel link $\L$, $S(D)$ be the Seifert circle decomposition of $D$, and $C_1$, $C_2$ be the Seifert circles in $S(D)$ containing the top and bottom long strands of $D$ respectively, then $\L$ is classified as a Type 1 (Type 2) pretzel link if $C_1\not=C_2$  and $C_1$, $C_2$ have different (identical) orientations. In the case that $C_1=C_2$, then $\L$ is classified as a Type 3 pretzel link. In our previous paper, we succeeded in reaching our goal for all Type 1 and Type 2 pretzel links. That is, we successfully derived precise braid index formulas for all Type 1 and Type 2 pretzel links. In this paper, we present the results of our study on Type 3 pretzel links. In this case, we are very close to reaching our goal. More precisely, with the exception of a small percentage of Type 3 pretzel links, we are able to determine the precise braid indices for the majority of Type 3 pretzel links. Even for those exceptional ones, we are able to determine their braid indices within two consecutive integers. With some numerical evidence, we conjecture that in such a case, the braid index of the Type 3 pretzel link is given by the larger of the two consecutive integers given by our formulas.
\end{abstract}

\maketitle
\section{Introduction}\label{sec:intro}

This paper is the second part of our comprehensive study on braid indices of non alternating pretzel links. Our study is motivated by the lack of general knowledge on braid indices of non alternating links while much more is known in the case of alternating links. For example, all alternating Montesinos links have known braid indices~\cite{Diao2021} (which include all rational links and alternating pretzel links), yet very little is known about the braid indices of non alternating Montesinos links, including the non alternating pretzel links. By studying the braid indices of the non alternating pretzel links, which contain surprisingly many cases that are quite different from their alternating counterparts, we hope our work will shed some light for future studies and add a rich set of non alternating links with known braid indices in the literature. 

\medskip
Let $D$ be a standard diagram of an oriented pretzel link $\L$ and $S(D)$ be the Seifert circle decomposition of $D$. Let $C_1$ and $C_2$ be the Seifert circles in $S(D)$ containing the top and bottom strands of $D$ respectively. Then $D$ is said to be a Type 1 (Type 2) pretzel link if $C_1\not=C_2$  and $C_1$, $C_2$ have different (identical) orientations. In the case that $C_1=C_2$, then $D$ is said to be a Type 3 pretzel link. In our previous paper~\cite{Diao2024}, we have succeeded in determining the braid indices for all Type 1 and Type 2 pretzel links. This paper continues and completes the work in~\cite{Diao2024} by finding the braid indices of all Type 3 pretzel links, with only one small exceptional class of Type 3 non alternating pretzel links, where we can only determine their braid indices to be within two consecutive integers. Similar to the approach used in our first paper~\cite{Diao2024}, we determine the braid index of a link by determining its lower bound using the Morton-Frank-Williams inequality (MFW inequality)~\cite{FW,Mo}, and its upper bound by direct construction. The MFW inequality states that the braid index $\b(\L)$ of any link $\L$ is bounded below by $\b_0(\L)=(E(\L)-e(\L))/2+1$, where $E(\L)-e(\L)$ is the degree span of the variable $a$ in the HOMFLY-PT polynomial $H(\L,z,a)$ of $\L$ with $E(\L)$ and $e(\L)$ being the highest and lowest powers of the variable $a$ in $H(\L,z,a)$. Thus, in order to apply the MFW inequality, one has to be able to determine $E(\L)$ and $e(\L)$ and this is the first hurdle one has to overcome. On the other hand, a well known result due  to Yamada~\cite{Ya}, states that the number of Seifert circles in any diagram $D$ of $\L$ is an upper bound for $\b(\L)$. So, if one can present $\L$ by a diagram $D$ whose number of Seifert circles, denoted by $s(D)$, is the same as $\b_0(\L)$, then one succeeds with $\b(\L)=s(D)=\b_0(\L)$. Otherwise, one obtains an inequality $\b_0(\L)\le \b(\L)\le s(D)$. Thus, the second hurdle one has to overcome is to construct a diagram of $\L$ with the desired number of Seifert circles, namely $\b_0(\L)$. However, this method has a caveat since it is known that the MFW inequality is not sharp for some links. Thus when one fails to find a diagram of $\L$ with $\b_0(\L)$ Seifert circles, one cannot be sure whether it is because the braid index of $\L$ is really larger than $\b_0(\L)$, or one has not found the optimal construction yet. This is precisely the situation we run into for the exceptional class of Type 3 pretzel links, see (\ref{MT1e4}) in Theorem \ref{MT1}.

\medskip
Figure \ref{Mont} shows a few examples of Type 3 pretzel links. A non alternating Type 3 pretzel link may not be a minimum diagram. In fact the pretzel link in the middle of Figure \ref{Mont} is the unknot. We note that alternating Type 3 pretzel links belong to the set of Type  B Montesinos links defined in~\cite{Diao2021}. If $D$ is a standard diagram of a Type 3 pretzel link, then the Seifert circle decomposition of $D$ contains a cycle of Seifert circles (the adjacent Seifert circles in the cycle share crossings) as shown in Figure \ref{Mont}. We shall call this cycle of Seifert circles the {\em main cycle} of Seifert circles of $D$. Without loss of generality, we can always orient the top long strand in a pretzel link diagram from right to left. This choice, together with the crossing sign and the information on how the crossings in a strip are to be smoothed (either all vertically or all horizontally), will allow us to determine the diagram completely. Thus in our notations introduced below, we only need to indicate the sign of the crossings in each strip. 

\medskip
\begin{definition}\label{TypeDef}{\em 
We introduce the following {\em grouping} of Type 3 pretzel links. Consider a Type 3 pretzel link $\L$, which has a diagram with the structure of a Montesinos link diagram. We divide its strips of crossings into two parts. The first part contains the strips whose strings have parallel orientations and the second part contains the strips whose strings have antiparallel orientations. We then use the notation
$P_3(\mu_1,\ldots,\mu_{\rho^+};-\nu_1,\ldots,-\nu_{\rho^-}\vert 2\alpha_1,\ldots, 2\alpha_{\kappa^+}; -2\beta_1,\ldots, -2\beta_{\kappa^-})$ to denote the set of all Type 3 pretzel links with strips containing $\mu_1,\ldots,\mu_{\rho^+}$ positive crossings and $\nu_1,\ldots,\nu_{\rho^-}$ negative crossings in the first part, and strips with $2\alpha_1,\ldots, 2\alpha_{\kappa^+}$ positive crossings and $2\beta_1,\ldots, 2\beta_{\kappa^-}$ negative crossings in the second part.
}
\end{definition}

\medskip
\begin{figure}[htb!]
\includegraphics[scale=.7]{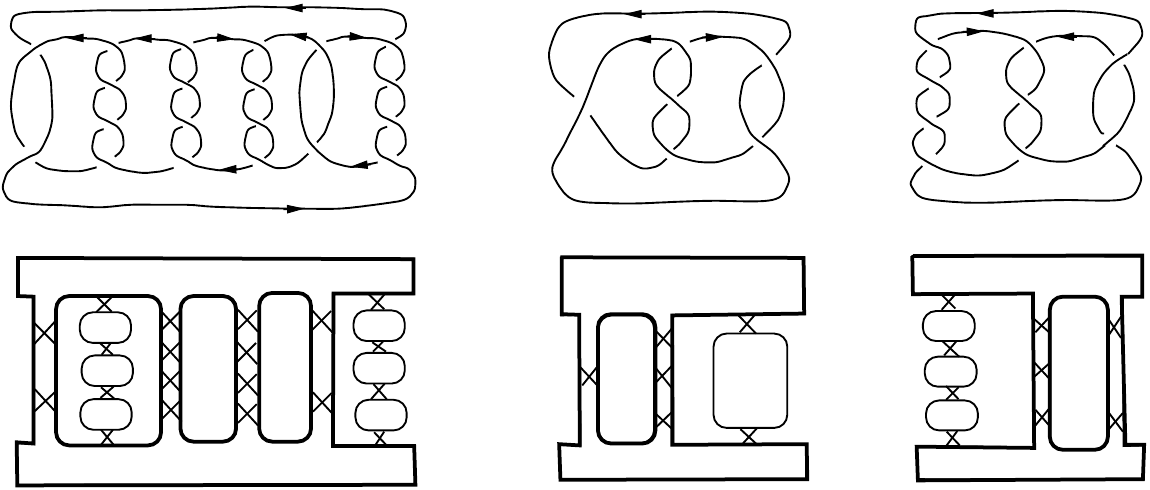}
\caption{Three examples of Type 3 pretzel links. Left: $D\in P_3(2;-4,-4,-2\vert 4,4;0)$; Middle: $D\in P_3(1;-3\vert 2;0)$; Right: $D\in P_3(2;-3\vert 4;0)$. The main cycles of Seifert circles in the examples are highlighted by thick lines.
}
\label{Mont} 
\end{figure}

\begin{remark}\label{remark1.1}{\em 
In the case of a Type 3 pretzel link, we must have $\kappa^++\kappa^-=2n$ for some $n\ge 1$, due to the fact that two non-concentric Seifert circles that share crossings must have opposite orientations. For the same reason, a strip with an odd number of half twists must belong to the first part in the grouping of a Type 3 pretzel link.
 }
 \end{remark}

\medskip
\begin{remark}\label{remark1.2}{\em 
For the sake of simplicity, we do not allow one crossing strips with different crossing signs in a pretzel link diagram since such crossings can be pairwise deleted via Reidemeister II moves. That is, we shall assume that in Definition \ref{TypeDef} that either $\mu_j>1$ for all $j$ or $\nu_i>1$ for all $i$. A pretzel link diagram with its top strand oriented from right to left and satisfying the condition that crossings in strips containing only one crossing have the same crossing sign is called a {\em standard} pretzel link diagram in this paper. Since the braid indices of a link and its mirror image are the same, we only need to consider Type 3 pretzel links with $\nu_i>1$ for all $i$ (in the case that $\rho^->0$). We summarize our results in the following theorems. }
\end{remark}

\medskip
In the statements of the theorems, we shall assume, WLOG, that $\mu_1\ge \mu_2 \ldots \ge \mu_{\rho^+}$, $\nu_1\ge \nu_2 \ldots \ge \nu_{\rho^-}$,  $\alpha_1\ge \alpha_2 \ldots \ge \alpha_{\kappa^+}$ and $\beta_1\ge \beta_2 \ldots \ge \beta_{\kappa^-}$. This does not mean that the corresponding strips are ordered like this in the diagram. We shall denote by $\delta^+$ the number of $\mu_j$'s such that $\mu_j=1$ by $\delta^-$ the number of $\nu_i$'s such that $\nu_i=1$. Note that $\delta^+\cdot \delta^-=0$ by Remark \ref{remark1.2}.

\begin{theorem}\label{MT1}
Let $\L\in P_3(\mu_1,\ldots,\mu_{\rho^+};-\nu_1,\ldots,-\nu_{\rho^-}\vert 2\alpha_1,\ldots, 2\alpha_{\kappa^+}; -2\beta_1,\ldots, -2\beta_{\kappa^-})$ such that $\rho^++\rho^-=2$, then
\begin{eqnarray}
\b(\L)&=&\left\{
\begin{array}{ll}
1,\ &{\rm if}\ \rho^+=\rho^-=1, \kappa^+=\kappa^-=0, |\mu_1-\nu_1|=1,\\
2,\ &{\rm if}\ \rho^+=\rho^-=1,  \kappa^+=\kappa^-=0, |\mu_1-\nu_1|\not=1,
\end{array}
\right.
 \label{MT1e0}
\end{eqnarray}
\begin{eqnarray}
\b(\L)&=&\left\{
\begin{array}{ll}
\sum \alpha_j,\ &{\rm if}\ \left\{
\begin{array}{l}
 \kappa^-=0, \kappa^+=1, \mu_1=1, \nu_1=3, \alpha_{1}=1,\ {\rm or}\\
  \kappa^-=0, \kappa^+>1, \mu_1=1, \nu_1=3, \alpha_{\kappa^+}=1, \alpha_{\kappa^+-1}>2, \ {\rm or}\\
 \kappa^-=0, \kappa^+> 0, \mu_1=1, \nu_1=2, \alpha_{\kappa^+}>1,
\end{array}
\right.\\
\sum \beta_i,\ &{\rm if}\ \left\{
\begin{array}{l}
\kappa^+=0, \kappa^-=1, \nu_1=1, \mu_1=3, \beta_{1}=1,\ {\rm or}\\
\kappa^+=0, \kappa^->1, \nu_1=1, \mu_1=3, \beta_{\kappa^-}=1, \beta_{\kappa^--1}>2, \ {\rm or}\\
\kappa^+=0, \kappa^-> 0, \nu_1=1, \mu_1=2, \beta_{\kappa^-}>1,
\end{array}
\right.
\end{array}
\right.
 \label{MT1e1}
\end{eqnarray}
\begin{eqnarray}
\b(\L)&=&\left\{
\begin{array}{ll}
1+\sum \alpha_j,\ &{\rm if}\ \left\{
\begin{array}{l}
\kappa^-=0,  \kappa^+>0, \mu_1>1, \nu_1=\mu_1+1, \alpha_{\kappa^+}>\mu_1,\ {\rm or}\\
\kappa^-=0,  \kappa^+>1, \mu_1=1, \nu_1=3, \alpha_{\kappa^+}=\alpha_{\kappa^+-1}=1,\ {\rm or}\\
\kappa^-=0, \kappa^+> 0, \mu_1=1, \nu_1=2, \alpha_{\kappa^+}=1,\ {\rm or}\\
\kappa^-=0, \kappa^+> 0, \mu_1=1, \nu_1=3, \alpha_{\kappa^+}>1,\ {\rm or}\\
\kappa^-=0, \kappa^+>0, \mu_1=1, \nu_1\ge 4,
\end{array}
\right.\\
1+\sum \beta_i,\ &{\rm if}\ \left\{
\begin{array}{l}
\kappa^+=0,  \kappa^->0, \nu_1>1, \mu_1=\nu_1+1, \beta_{\kappa^-}>\nu_1,\ {\rm or}\\
\kappa^+=0,  \kappa^->1, \mu_1=3, \nu_1=1, \beta_{\kappa^-}=\beta_{\kappa^--1}=1,\ {\rm or}\\
\kappa^+=0, \kappa^-> 0, \mu_1=2, \nu_1=1, \beta_{\kappa^-}=1,\ {\rm or}\\
\kappa^+=0, \kappa^-> 0, \mu_1=3, \nu_1=1, \beta_{\kappa^-}>1,\ {\rm or}\\
\kappa^+=0, \kappa^->0, \mu_1\ge 4, \nu_1=1,
\end{array}
\right.
\end{array}
\right.
 \label{MT1e2}
\end{eqnarray}
\begin{eqnarray}
\b(\L)&=&\left\{
\begin{array}{ll}
2+\sum \alpha_j,\ &{\rm if}\ \left\{
\begin{array}{l}
\kappa^-=0, \kappa^+>0, \mu_1>1, \nu_1=\mu_1+1, \alpha_{\kappa^+}=1,\ {\rm or}\\
\kappa^-=0, \kappa^+>0, \mu_1>1, \nu_1\not=\mu_1+1,\ {\rm or}\\
\rho^+=0, \rho^-=2, \kappa^-=0, \kappa^+>0,
\end{array}
\right.\\
2+\sum \beta_i,\ &{\rm if}\ \left\{
\begin{array}{l}
\kappa^+=0, \kappa^->0, \nu_1>1, \mu_1=\nu_1+1, \beta_{\kappa^-}=1,\ {\rm or}\\
\kappa^+=0, \kappa^->0, \nu_1>1, \mu_1\not=\nu_1+1,\ {\rm or}\\
\rho^+=2, \rho^-=0, \kappa^+=0, \kappa^->0, 
\end{array}
\right.
\end{array}
\right.
 \label{MT1e3}\\
  \b(\L)&\in&\left\{
\begin{array}{ll}
  \{1+\sum \alpha_j,2+\sum \alpha_j\}\ &{\rm if}\ 
\kappa^-=0, \kappa^+>0, \mu_1>1, \nu_1=\mu_1+1, 1<\alpha_{\kappa^+}\le \mu_1, \\
 \{1+\sum \beta_i,2+\sum \beta_i\}\ &{\rm if}\ \kappa^+=0, \kappa^->0,  \nu_1>1, \mu_1=\nu_1+1, 1<\beta_{\kappa^-}\le \nu_1,\\
  \{\sum \alpha_j,1+\sum \alpha_j\}\ &{\rm if}\ 
\kappa^-=0, \kappa^+>1, \mu_1=1, \nu_1=3, \alpha_{\kappa^+}=1, \alpha_{\kappa^+-1}= 2,\\
 \{\sum \beta_i,1+\sum \beta_i\}\ &{\rm if}\ \kappa^+=0, \kappa^->1, \mu_1=3, \nu_1=1, \beta_{\kappa^-}=1, \beta_{\kappa^--1}= 2,
  \end{array}
  \right. \label{MT1e4}\\
  \b(\L)&=&\sum \alpha_j+\sum \beta_i,  \ {\rm if}\ 
  \left\{
\begin{array}{l}
\rho^+= \delta^+=2, \rho^-=0, \kappa^+\ge 2,\ {\rm or}\\
\rho^-=\delta^-=2, \rho^+=0, \kappa^-\ge 2,
\end{array}
  \right.\label{MT1e5}\\
   \b(\L)&=&1+\sum \alpha_j+\sum \beta_i, \ {\rm if}\ \left\{
\begin{array}{l}
\kappa^->0, \kappa^+>0, \mu_1=1, \nu_1\ge 2,\ {\rm or}\\
\kappa^+>0, \kappa^->0, \mu_1\ge 2, \nu_1=1,\ {\rm or}\\
\rho^+=2, \rho^-=0, \delta^+\ge 1, \kappa^+=1,\ {\rm or}\\
\rho^-=2, \rho^+=0, \delta^-\ge 1, \kappa^-=1,\ {\rm or}\\
\rho^+=2, \rho^-=0, \delta^+= 1, \kappa^+\ge 1,\ {\rm or}\\
\rho^-=2, \rho^+=0, \delta^-= 1, \kappa^-\ge 1,
\end{array}
  \right.\label{MT1e6}\\
 \b(\L)&=&2+\sum \alpha_j+\sum \beta_i, \ {\rm if}\ 
  \left\{
\begin{array}{l}
\kappa^->0,  \mu_1>1, \nu_1=\mu_1+1,\ {\rm or}\\
\kappa^+>0, \nu_1>1, \mu_1=\nu_1+1,\ {\rm or}\\
\mu_1>1,  \nu_1>1, |\mu_1-\nu_1|\not=1,\ {\rm or}\\
\rho^+=2, \rho^-=0, \kappa^+>0, \delta^+=0, \ {\rm or}\\
\rho^-=2, \rho^+=0, \kappa^->0, \delta^-=0.
\end{array}
  \right.\label{MT1e7}
\end{eqnarray}
\end{theorem}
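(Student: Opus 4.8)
The plan is to establish each value (or interval) by the standard sandwich: derive a lower bound for $\b(\L)$ from the MFW inequality and a matching upper bound from Yamada's theorem by exhibiting a diagram whose number of Seifert circles equals $\b_0(\L)$. Since $\rho^++\rho^-=2$, there are exactly two strips of parallel (odd or singular) type, and by Remark \ref{remark1.2} we may pass to a mirror image, so it suffices to treat the branches in which the even part is positive (the $\sum\alpha_j$ cases); the $\sum\beta_i$ branches then follow verbatim, the braid index being a mirror invariant. I would also dispose at the outset of the degenerate case $\kappa^+=\kappa^-=0$ of (\ref{MT1e0}), where the main cycle is empty and $\L$ is a two-strand configuration (a $(2,m)$-torus-type link or, when $|\mu_1-\nu_1|=1$, the unknot), handled by inspection.

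First I would compute $E(\L)$ and $e(\L)$, the extreme powers of $a$ in $H(\L,z,a)$. When the two parallel strips are oriented-smoothed away, the diagram collapses onto the main cycle of even strips, i.e.\ a simpler pretzel link of the types already resolved in \cite{Diao2024} (alternating ones also covered by \cite{Diao2021}), whose $a$-span is known. I would therefore apply the HOMFLY skein relation repeatedly at the crossings of the two parallel strips, which yields a finite recursion in their lengths and expresses $H(\L,z,a)$ as an explicit $\mathbb{Z}[z^{\pm 1},a^{\pm 1}]$-combination of the HOMFLY polynomials of a short list of base links -- essentially the pure even cycle together with the links obtained by fully resolving one or both parallel strips. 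The crux is then deciding, in each regime, whether the extremal $a$-terms of these summands survive or cancel: the recursion contributes powers $a^{\pm k}$ weighted by writhe-dependent shifts coming from the two parallel strips, and whether a leading or trailing coefficient vanishes depends precisely on the arithmetic relations indexing the cases -- the parity and relative sizes of $\mu_1$ and $\nu_1$, the critical relation $\nu_1=\mu_1+1$, the boundary conditions $\alpha_{\kappa^+}=1$ and $\alpha_{\kappa^+}\le\mu_1$, and the counts $\delta^{\pm}$ of singular strips. Carrying out this cancellation bookkeeping is what splits the statement into the branches (\ref{MT1e1})--(\ref{MT1e7}) and produces the corrections $+0,+1,+2$ added to $\sum\alpha_j\,(+\sum\beta_i)$.

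For the upper bounds I would start from the standard pretzel diagram, in which each antiparallel strip $2\alpha_j$ contributes its share of nested Seifert circles to the main cycle, and apply the Seifert-circle-reducing moves developed in \cite{Diao2024} to bring the count down to exactly the value of $\b_0(\L)$ dictated by the HOMFLY computation above. The additive constant $+0,+1,$ or $+2$ in each case then records how many Seifert circles the two parallel strips and the closure of the main cycle are forced to contribute after all admissible reductions; confirming that no further reduction is possible (so that $s(D)=\b_0(\L)$) closes the sandwich $\b_0(\L)\le \b(\L)\le s(D)=\b_0(\L)$ and yields the stated equalities in (\ref{MT1e0})--(\ref{MT1e3}) and (\ref{MT1e5})--(\ref{MT1e7}).

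The main obstacle is the exceptional family (\ref{MT1e4}). There the skein analysis pins $\b_0(\L)$ to the smaller of the two listed integers, yet every diagram I can construct -- and, conjecturally, every diagram whatsoever -- realizes only the larger value, the gap being exactly the known non-sharpness of the MFW inequality. I expect the hardest honest computation to be the cancellation analysis at the two borderline relations $\alpha_{\kappa^+-1}=2$ versus $\alpha_{\kappa^+-1}>2$ (and their $\beta$-mirrors), since these are precisely the thresholds at which an extremal HOMFLY coefficient switches between vanishing and nonvanishing, and hence where the answer jumps between a sharp value and the unresolved interval of (\ref{MT1e4}).
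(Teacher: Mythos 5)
Your overall strategy---the MFW lower bound $\b_0(\L)$ computed by skein recursion on a standard diagram, matched by an explicit diagram with $\b_0(\L)$ Seifert circles, with mirror symmetry halving the cases---is the same sandwich the paper uses, and your treatment of (\ref{MT1e0}) and your description of the exceptional family are broadly accurate. But there is a genuine gap in where you locate the source of the case splitting. You assert that the thresholds $\alpha_{\kappa^+}\le\mu_1$ versus $\alpha_{\kappa^+}>\mu_1$, and likewise $\alpha_{\kappa^+-1}=2$ versus $\alpha_{\kappa^+-1}>2$, are points ``at which an extremal HOMFLY coefficient switches between vanishing and nonvanishing,'' so that cancellation bookkeeping alone produces the branches of the theorem. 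This is false. By Proposition \ref{P4.1}, for $\mu_1>1$, $\nu_1=\mu_1+1$, $\kappa^-=0$, $\kappa^+>0$ one has $e(\L)=-\kappa^+-2\sum\alpha_j$ and $E(\L)=-\kappa^+$ for \emph{every} $\alpha_{\kappa^+}>1$, hence $\b_0(\L)=1+\sum\alpha_j$ identically on both sides of the threshold $\alpha_{\kappa^+}=\mu_1$: the HOMFLY polynomial does not see that threshold at all (the only cancellation threshold there is $\alpha_{\kappa^+}=1$ versus $\alpha_{\kappa^+}>1$). The split between case (i) of (\ref{MT1e2}) and the unresolved family (\ref{MT1e4}) is created entirely on the upper-bound side: when $\alpha_{\kappa^+}>\mu_1$ the special moves of Figure \ref{MT1e2i_fig} merge the $\mu_1$ and $\nu_1$ strips into one long Seifert circle and realize $1+\sum\alpha_j$ circles, and this construction needs $\alpha_{\kappa^+}>\mu_1$; when $1<\alpha_{\kappa^+}\le\mu_1$ no such diagram is available, and Example \ref{Ex1}(iii) shows that for $P_3(2;-3\vert 4;0)$ one actually has $\b(\L)=\b_0(\L)+1$, i.e.\ MFW is genuinely non-sharp there. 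So a plan that expects the skein recursion to output the branch structure of (\ref{MT1e1})--(\ref{MT1e7}) cannot succeed; the same remark applies to the $\alpha_{\kappa^+-1}=2$ threshold, which by Remark \ref{remark1.3} is just the identification $P_3(1;-3\vert \ldots,2;0)=P_3(2;-3\vert \ldots;0)$ landing the link in the exceptional regime, not a vanishing phenomenon.

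A secondary underestimate concerns the upper bounds. The reductions you propose to import from Part I (MP moves on lone crossings) are not sufficient for (\ref{MT1e1}), (\ref{MT1e2}), (\ref{MT1e5}) and (\ref{MT1e6}); the paper needs constructions introduced in this Part II---the A-move of Figure \ref{alpha_sigma_reduction}, the N-move followed by re-routing a long strand (Figures \ref{MT1e2v_fig} and \ref{MT1e6_fig}), and the special moves of Figure \ref{MT1e2i_fig}---and verifying that these apply exactly under the stated hypotheses is where most of the case-specific work lies. Also, your phrase ``confirming that no further reduction is possible'' is not what is required (and is not provable by inspection of one diagram): one only needs some diagram attaining $\b_0(\L)$, since MFW supplies the other inequality. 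Finally, note that the paper organizes the skein induction on the antiparallel strips and on $\kappa^{\pm}$ (with connected sums of torus links and the alternating results of \cite{Diao2021} as base cases), rather than resolving the two parallel strips as you propose; your organization is plausible in principle, but it is not the route the paper takes and you would still face the obstruction described above.
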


\medskip
\begin{theorem}\label{MT2}
Let $\L\in P_3(\mu_1,\ldots,\mu_{\rho^+};-\nu_1,\ldots,-\nu_{\rho^-}\vert 2\alpha_1,\ldots, 2\alpha_{\kappa^+}; -2\beta_1,\ldots, -2\beta_{\kappa^-})$ such that $\rho^++\rho^-=2n\ge 4$, then
\begin{eqnarray}
\b(\L)&=&
\left\{
\begin{array}{ll}
2n-\kappa^+-\min\{\delta^+-\kappa^+,n-1\}+\sum \alpha_j+\sum \beta_i,\ &{\rm if}\ \delta^+> \rho^-+\kappa^+,\\
2n-\kappa^--\min\{\delta^--\kappa^-,n-1\}+\sum \alpha_j+\sum \beta_i \  &{\rm if}\ \delta^-> \rho^++\kappa^-,
\end{array}
\right.\label{MT2e1}\\
\b(\L)&=&
\left\{
\begin{array}{ll}
2n-\delta^++\sum \alpha_j+\sum \beta_i \  &{\rm if}\ \delta^-=0\ {\rm and}\  \delta^+\le \rho^-+\kappa^+,\\
2n-\delta^-+\sum \alpha_j+\sum \beta_i \  &{\rm if}\ \delta^+=0\ {\rm and}\  \delta^-\le \rho^++\kappa^-.
\end{array}
\right.\label{MT2e2}
\end{eqnarray}
\end{theorem}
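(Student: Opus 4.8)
The plan is to establish the two matching bounds dictated by the general strategy of the paper: a lower bound for $\b(\L)$ from the MFW inequality and an upper bound from Yamada's theorem realized by an explicit diagram. Since the braid index of a link coincides with that of its mirror image, and since $\delta^+\cdot\delta^-=0$, I would first normalize to the case $\delta^-=0$ (consistent with the standing assumption of Remark \ref{remark1.2}), so that only the first subcases of (\ref{MT2e1}) and (\ref{MT2e2}) require a direct argument; the $-$ subcases then follow by passing to mirror images. Under this normalization the claim becomes that the reduction from the naive count $2n+\sum\alpha_j+\sum\beta_i$ equals $\delta^+$ when $\delta^+\le \rho^-+\kappa^+$, and equals $\kappa^++\min\{\delta^+-\kappa^+,n-1\}$ otherwise.

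For the upper bound I would build, starting from the standard diagram of $\L$, a diagram $D^\p$ whose Seifert circle number $s(D^\p)$ equals the asserted value, giving $\b(\L)\le s(D^\p)$ by Yamada's theorem. The main cycle contributes $2n$ Seifert circles (the $2n$ parallel strips of the first part realize a cyclic arrangement in which adjacent circles carry opposite orientations, forcing the even length $2n$), while the antiparallel strips of the second part contribute $\sum\alpha_j+\sum\beta_i$ further circles in reduced form. The only place where circles of the main cycle can be amalgamated is at a single-crossing parallel strip, where two adjacent main-cycle circles can be merged by a Seifert-circle reduction move of Murasugi--Przytycki type. I would then perform these merges greedily, noting that their number is constrained simultaneously by the supply $\delta^+$ of single-crossing strips, by the structural quantity $\rho^-+\kappa^+$ (a merge cannot be pushed across a strip whose two circles are forced to keep opposite orientations), and by $n-1$ (the cycle cannot be fully collapsed), which is exactly what produces the three competing quantities in the formula.

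For the lower bound I would compute the extreme powers $E(\L)$ and $e(\L)$ of $a$ in $H(\L,z,a)$ and verify that $(E(\L)-e(\L))/2+1$ equals the same number. I expect to obtain these by induction on $n$, applying the HOMFLY-PT skein relation to a crossing of a parallel strip in order to relate $\L$ to Type 3 pretzel links of smaller total complexity, with the base case $\rho^++\rho^-=2$ furnished by Theorem \ref{MT1}. The delicate point is to identify which monomial in $a$ realizes $E(\L)$ and $e(\L)$ and to confirm that its coefficient does not vanish; I anticipate that the dichotomy $\delta^+\le \rho^-+\kappa^+$ versus $\delta^+> \rho^-+\kappa^+$ emerges here precisely as the condition deciding whether the predicted extreme term survives, or is cancelled by the contributions introduced through the single-crossing strips.

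The main obstacle will be this cancellation analysis for the HOMFLY-PT polynomial. Unlike the Seifert-circle bookkeeping of the upper bound, which is essentially combinatorial, the extreme $a$-degrees need not meet the bounds predicted by Morton's inequalities applied to the standard diagram, and for a non-alternating diagram a single one-crossing strip can shift an extreme term. I therefore expect the bulk of the work to be a careful inductive derivation of closed forms for the coefficients of $a^{E(\L)}$ and $a^{e(\L)}$, organized according to the quantities $\delta^+$, $\rho^-$, and $\kappa^+$, so that the two regimes of (\ref{MT2e1}) and (\ref{MT2e2}) are separated cleanly and the resulting value of $(E(\L)-e(\L))/2+1$ is matched with $s(D^\p)$.
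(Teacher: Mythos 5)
Your proposal has a genuine gap in the upper bound construction. You propose to realize the claimed Seifert circle count using only Murasugi--Przytycki-type merges of adjacent main-cycle circles at single-crossing parallel strips. Such moves can never reduce the main-cycle count $2n$ by more than $\min\{\delta^+,n-1\}$, so your construction yields at best $2n-\min\{\delta^+,n-1\}+\sum\alpha_j+\sum\beta_i$ Seifert circles, which is strictly larger than the claimed value whenever $\kappa^+>0$ and $\delta^+>n-1$ (and the analogous situation occurs in case (\ref{MT2e2})). Concretely, for $\L\in P_3(3,1,1,1,1,1,1,1;0\vert 4,4,2;0)$ (Example \ref{Ex1}(vi) of the paper) one has $n=4$, $\delta^+=7$, $\kappa^+=3$, $\sum\alpha_j=5$, and formula (\ref{MT2e1}) gives $\b(\L)=7$, while your merges produce a diagram with $8-3+5=10$ Seifert circles. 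The missing idea is the $A$-move (Remark \ref{additional_reduction}, Figure \ref{alpha_sigma_reduction}): a positive lone crossing can be absorbed into an antiparallel strip $2\alpha_j$ by re-routing a strand, reducing the Seifert circle count by one per move while leaving the $\alpha_j-1$ internal MP moves of that strip intact. Performing $\kappa^+$ such moves (or $\delta^+$ of them when $\delta^+\le\kappa^+$) and then the remaining $\min\{\delta^+-\kappa^+,n-1\}$ MP moves on the leftover lone crossings is exactly what produces the $-\kappa^+$ term in (\ref{MT2e1}) and closes the gap down to $2n-\delta^+$ in (\ref{MT2e2}); no combination of main-cycle merges alone can do this.

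Two secondary points. First, you attribute the dichotomy $\delta^+\lessgtr\rho^-+\kappa^+$ to an obstruction against merging in the upper bound (``a merge cannot be pushed across a strip whose two circles are forced to keep opposite orientations''); in fact this dichotomy arises on the lower-bound side, as the condition separating the two regimes of the extreme $a$-power computations (Propositions \ref{P5.4} and \ref{P5.6}), while on the upper-bound side the hypothesis $\delta^+\le\rho^-+\kappa^+$ is used only to verify $\delta^+-\kappa^+\le n-1$. Second, your lower-bound induction ``on $n$ with base case $\rho^++\rho^-=2$ from Theorem \ref{MT1}'' does not match how the skein relation actually behaves on these diagrams: resolving a crossing in a parallel strip either shortens that strip (leaving $n$ unchanged) or cuts the main cycle open and produces a connected sum of torus links, never a Type 3 link with smaller $n$. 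The paper instead inducts on the crossing numbers within strips (and on $\kappa^\pm$), with connected sums of torus links and alternating diagrams (handled via N-moves, A-moves, and the Type B/M2 results of \cite{Diao2021}) as base cases, and controls cancellation by tracking that the extreme coefficients stay in a fixed sign class $F$. Your cancellation concerns are well placed, but the inductive scheme as stated would not get off the ground.
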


\begin{remark}\label{remark1.3}{\em 
We note that in the statements of Theorems \ref{MT1} and \ref{MT2}, as well as in the rest of this paper, the summations $\sum \alpha_j$ and $\sum \beta_i$ are always taken over all possible values of $j$ and $i$, namely $1\le j\le \kappa^+$ and $1\le i\le \kappa^-$. In the case that $\kappa^+=0$ ($\kappa^-=0$), it is understood that $\sum \alpha_j=0$ ($\sum \beta_i=0$). In the case that the Type 3 pretzel link is alternating, the formulation of the braid index of the link here matches with that obtained in \cite{Diao2021}. The Type 3 pretzel links covered by formula (\ref{MT1e4}) satisfy the condition $\b_0(\L)\le \b(\L)\le \b_0(\L)$+1, and we know that for some of them (as we shall see in Example \ref{Ex1} it is true that $\b_0(\L)<\b(\L)=\b_0(\L)+1$. Finally, we note that the third  and fourth expression in (\ref{MT1e4}) follow directly from the first and second expression respectively as $P_3(1;-3\vert 2\alpha_1,\ldots, 2\alpha_{\kappa^+-1},2;0)=P_3(2;-3\vert 2\alpha_1,\ldots, 2\alpha_{\kappa^+-1};0)$ and $P_3(3;-1\vert 0;-2\beta_1,\ldots, -2\beta_{\kappa^--1},-2)=P_3(3;-2\vert 0;-2\beta_1,\ldots, -2\beta_{\kappa^--1})$.
}
\end{remark}

\medskip
\begin{examples}\label{Ex1}{\em
(i) $\L\in P_3(3,1,1,1,1,1;-5,-4\vert 0;0)$. Here $\rho^++\rho^-=6+2=2n=8$, $\delta^+=5$.  $\b(\L) =2n-\min\{5,3\}=5$ by (\ref{MT2e1}). The alternating counterpart of $\L$ is in $P_3(5,4,3,1,1,1,1,1;0\vert 0;0)$, which has the same braid index 5.

\smallskip\noindent
(ii) $\L\in P_3(2,2,2,1,1;-2,-2,-2\vert 0;0)$. Here $\rho^++\rho^-=5+3=2n=8$, $\delta^+=2$. $\b(\L) =2n-\delta^+=6$ by (\ref{MT2e2}). The alternating counterpart of $\L$ is in $P_3(2,2,2,2,2,2,1,1;0\vert 0;0)$, which has the same braid index 6 by (\ref{MT2e2}).

\smallskip\noindent
(iii) $\L=L9n15\{1\}\in P_3(2;-3|4;0)$ ($L9n15\{1\}$ is the notation in \cite{Liv}, in Rolfson's notation this is $9_{49}^2$). A diagram $D$ of $\L$ is shown at the right side of Figure \ref{Mont}. By (\ref{MT1e4}), we have $\b_0(\L)=3\le \L\le 4$. However, by direction computation we see that for the parallel double $\mathbb{D}$ of $D$ as shown in Figure \ref{double}, we have $\b_0(\mathbb{D})=7$. Since it is apparent that $\b(\mathbb{D})\le 2\b(\L)$,  we must have $\b(\L)=4$. The alternating counterpart of $\L$ is in $P_3(3,2;0\vert 0;-4)$ ($L9a29\{1\}$ is the notation in \cite{Liv}, in Rolfson's notation this is $9_{19}^2$), which has the same braid index 4.

\smallskip\noindent 
(iv) $\L=L11n204\{1\}\in P_3(2;-3|6;0)$. Here we have $\b(\L)=4=1+\sum \alpha_j$. A proof of this will be given in the last section. One wants to compare this example with (iii). The alternating counterpart of $\L$ is in $P_3(3,2;0\vert 0;-6)$ ($L11a277\{1\}$ is the notation in \cite{Liv}), which has a braid index of 5.

\smallskip\noindent
(v) $\L\in P_3(3,3,2;-3|0;-4,-4)$. Here $\rho^++\rho^-=3+1=2n=4$, $\delta^+=\delta^-=0$. $\b(\L) =8$ by (\ref{MT2e2}).
The alternating counterpart of $\L$ is in $P_3(3,3,3,2;0\vert 0;-4,-4)$, which has the same braid index 8 by (\ref{MT2e2}).

\smallskip\noindent
(vi) $\L\in P_3(3,1,1,1,1,1,1,1;0|4,4,2;0) $. Here $n=4$, $\delta^+=7$, $\kappa^+=3$ so $\delta^+-\kappa^+=4>n-1=3$. By (\ref{MT2e1}), $\b(\L)=8-3-3+\sum \alpha_j=7$ but its alternating counterpart (which is in $P_3(3,1,1,1,1,1,1,1;0|0,-4,-4,-2)$) has braid index $10$. This example can be generalized so that the difference between the braid indices of $\L$ and its alternating counterpart is equal to any given positive integer as follows. Let $q\ge 1$ be a given positive integer. Consider $\L\in P_3(\mu_1,\ldots,\mu_{\rho^+};0\vert 2\alpha_1,\ldots, 2\alpha_{\kappa^+}; 0)$ such that $\rho^+=2n\ge 2q$, $\kappa^+=q$ and $\delta^+>q+n-1$. Then by (\ref{MT2e1}), $\b(\L)=2n-\kappa^+-(n-1)+\sum \alpha_j$. The  alternating counterpart of $\L$ is in $P_3(\mu_1,\ldots,\mu_{2n};0\vert 0;-2\alpha_1,\ldots,-2\alpha_{\kappa^+})$, which has braid index $\b(\L)=2n-(n-1)+\sum \alpha_j$. Thus the difference between the braid indices of $\L$ and its alternating counterpart is $\kappa^+=q$.
}
\end{examples}

\medskip
\begin{figure}[htb!]
\includegraphics[scale=1.2]{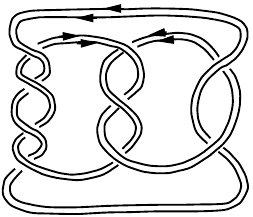}
\caption{A parallel double of $D\in P_3(2;-3\vert 4;0)$.
}
\label{double} 
\end{figure}

\medskip
As our examples above show, in some cases, the difference between the braid indices of a non alternating pretzel link and its alternating counterpart can be arbitrarily large. This means that the construction method (for determining the braid index upper bound) we use for alternating links is not enough. This is similar to the situation we run into in Part I of our study on the subject  \cite{Diao2024}, and we will also need the new construction methods introduced there for this purpose. 

\medskip
The approach we use here to prove Theorems \ref{MT1} and \ref{MT2} is similar to that used in our previous paper: we will establish the braid index lower bound by computing $\b_0(\L)=(E(\L)-e(\L))/2+1$ for each $\L$, and the prove that $\b_0(\L)$ is (in most cases) also the braid index upper bound by direction construction.
The rest of the paper is arranged as follows. In the next section, we will very briefly summarize the main tools we need for establishing the braid index lower bounds and refer the details to our previous paper. In order to compute $\b_0(\L)$, we need to determine $E(\L)$ and $e(\L)$. It turns out that $\L$ has to be divided into many different cases for this purpose. To keep these many different cases more trackable, we shall divide the derivation of $E(\L)$ and $e(\L)$ in three sections, namely Sections \ref{Type3basic_case}, \ref{Type3S2} and \ref{Type3S3}. The actual calculations of $\b_0(\L)$  are given at the end of Section \ref{Type3S3}. We will carry out the proofs for the braid index upper bounds in Section \ref{Upper_Sec} and end the paper with some remarks and open questions in Section \ref{end_sec}.

\section{preparations for establishing the lower and upper bounds}\label{bound_prep}

For the convenience of our reader, in this section we provide a list of known facts and formulas needed for our proofs and calculations. The reader can find the details in our previous paper \cite{Diao2024} and the references provided there.

\medskip
\begin{remark}\label{remark2.1}{\em We will be using the following two equivalent forms of skein relation to compute the HOMFLY-PT polynomial $H(D,z,a)$:
\begin{eqnarray}
H(D_+,z,a)&=&a^{-2}H(D_-,z,a)+a^{-1}zH(D_0,z,a),\label{Skein1}\\
H(D_-,z,a)&=&a^2 H(D_+,z,a)-azH(D_0,z,a).\label{Skein2}
\end{eqnarray}
For a link diagram $D$, we shall use $c(D)$ ($c^-(D)$) to denote the number of crossings (negative crossings) in $D$, and $w(D)=c(D)-2c^-(D)$ to denote the writhe of $D$. If we write $H(D,z,a)$ as a Laurent polynomial of $a$, then we will use  $p^h(D)$ ($p^\ell(D)$) to denote the coefficient of $a^{E(D)}$ ($a^{e(D)}$). $p^h(D)$ and $p^\ell(D)$ are Laurent polynomials of $z$ and we shall use $p^h_0(D)$ and $p^\ell_0(D)$ to denote the highest power terms of $p^h(D)$ and $p^\ell(D)$ respectively. This means that $p^h_0(D)$ and $p^\ell_0(D)$ are monomials of $z$.}
\end{remark}

\begin{figure}[htb!]
\includegraphics[scale=.6]{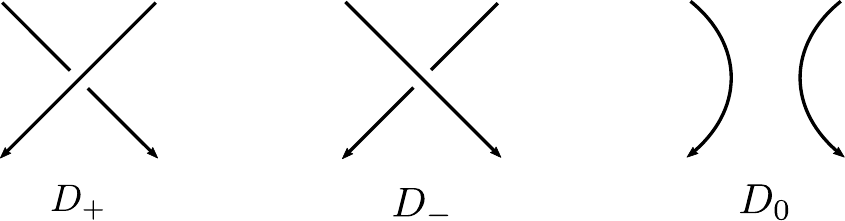}
\caption{The sign convention at a crossing of an oriented link and the splitting of the crossing: the crossing in $D_+$ ($D_-$) is positive (negative) and is assigned $+1$ ($-1$) in the calculation of the writhe of the link diagram.}
\label{fig:cross}
\end{figure}

\medskip
\begin{remark}\label{remark2.2}{\em 
The HOMFLY-PT polynomial has the following properties:

(i) $H(\L_1\#\L_2)=H(\L_1)H(\L_2)$ in general~\cite{HOMFLY}; (ii) $H(\L^r,z,a)=H(\L,z,-a^{-1})$ where $\L^r$ is the mirror image of $\L$~\cite{HOMFLY}. From this one obtains $E(\L^r)=-e(\L)$, $e(\L^r)=-E(\L)$, $p^h(\L^r)=(-1)^{e(\L)}p^\ell(\L)$ and $p^\ell(\L^r)=(-1)^{E(\L)}p^h(\L)$; (iii) $H(\L,z,a)$
does not change under a mutation move~\cite[Proposition 11]{LICKORISH}. It follows that changing the order of the strips in a pretzel link will not change the HOMFLY-PT polynomial of the link, hence all pretzel links in any one of the $P_3$ sets as defined in Definition~\ref{TypeDef} will have the same HOMFLY-PT polynomial.}
\end{remark}

\medskip
By Remark \ref{remark1.2} and Remark \ref{remark2.2}(ii), in the rest of this paper, we only need to consider Type 3 pretzel links without negative lone crossings. That is, we shall assume that $\nu_i>1$ for all $i$ in the case that $\rho^->0$. 

\medskip
\begin{remark}\label{remark2.3}{\em 
VP$^+$ and VP$^-$ refer to the procedures defined in \cite{Diao2021,Diao2024} where we apply (\ref{Skein1}) to a positive crossing or (\ref{Skein2}) to a negative crossing in a diagram $D$ of $\L$ in order to determine $E(\L)$ and $e(\L)$. }
\end{remark}

\begin{remark}\label{elementary_remark}{\em 
A crossing in a link diagram is called a {\em lone crossing} if it is the only crossing between two Seifert circles of the diagram. 
If $D$ is an alternating link diagram without any lone crossings, then by \cite[proposition 1.1]{DHL2017} we have
$$
E(D)= s(D)-w(D)-1, e(D)= -s(D)-w(D)+1
$$
with
\begin{eqnarray*}
p_0^h(D)&=&(-1)^{c^-(D)} z^{c(D)-2\sigma^-(D)-s(D)+1}, \\
p_0^\ell(D)&=& (-1)^{c^-(D)+s(D)-1} z^{c(D)-2\sigma^+(D)-s(D)+1},
\end{eqnarray*}  
where $c(D)$ ($c^-(D)$) stands for the number of crossings (negative crossings) in $D$, $w(D)=c(D)-2c^{-}(D)$ is the writhe of $D$, $\sigma^+(D)$ ($\sigma^-(D)$) is the number of pairs of Seifert circles in $S(D)$ that share multiple positive (negative) crossings. }
\end{remark}

\medskip
\begin{remark} \label{MP_move}{\em
If a diagram has a lone crossing, then either the over strand or the under strand at that crossing can be re-routed to create a new diagram with one less Seifert circle. This move is known as the {\em Murasugi-Przytycki move} (MP move for short)~\cite{Diao2021}. If the diagram contains a sequence of $k\ge 2$ Seifert circles connected by $k-1$ lone crossings, then at least $\lfloor \frac{k}{2}\rfloor$ MP moves can be performed as shown in Figure \ref{MP_move}. We denote by $r^+(D)$ ($r^-(D)$) the number of MP moves that can be made on positive (negative) lone crossings in $D$.
}
\end{remark}

\begin{figure}[!htb]
\includegraphics[scale=0.5]{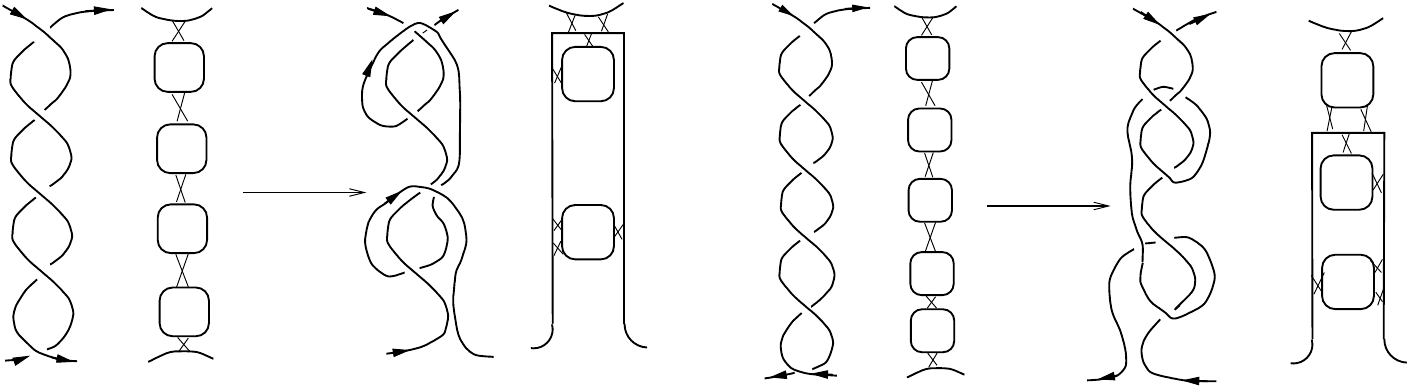}
\caption{The multiple MP moves performed on a string of Seifert circles connected by lone crossings. Left: The number of Seifert circles is reduced by $\lfloor \frac{4}{2}\rfloor=2$; Right: The number of Seifert circles is reduced by $\lfloor \frac{5}{2}\rfloor=2$.}\label{MP_move}
\end{figure}

\medskip
\begin{remark} \label{toruslink_formula} 
{\em
Let $T_o(2k, 2)$ and $T_o(-2k,2)$ be the torus links whose components have opposite orientations and whose crossings are positive and negative respectively, then   
\begin{eqnarray}
H(T_o(2k,2))&=&\left\{
\begin{array}{l}
z(a^{-1} +\ldots +a^{-2k+3})+(z+z^{-1})a^{-2k+1} -z^{-1}a^{-2k-1}, \ k>1,\\
(z+z^{-1})a^{-1} -z^{-1}a^{-3},\ k=1.
\end{array}
\right.\label{HTo(2k)}\\
H(T_o(-2k,2))&=&\left\{
\begin{array}{l}
-z(a +\ldots +a^{2k-3})-(z+z^{-1})a^{2k-1} +z^{-1}a^{2k+1}, \ k>1,\\
-(z+z^{-1})a +z^{-1}a^{3}, \ k=1.
\end{array}
\right.\label{HTo(-2k)}
\end{eqnarray}
On the other hand, if $T_p(n, 2)$ and $T_p(-n,2)$ ($n\ge 2$) are the torus knots/links whose components (when $n$ is even) have parallel orientations and whose crossings are positive and negative respectively, then   
\begin{eqnarray}
H(T_p(n,2))&=&
f_{n+2} a^{1-n} - f_{n} a^{-1-n},
\label{HTp(n)}\\
H(T_p(-n,2))&=&
(-1)^{n}( f_{n} a^{n+1}-f_{n+2} a^{n-1}),
\label{HTp(-n)}
\end{eqnarray}
where $\{f_k\}$ is the Fibonacci like sequence defined by $f_{n+2}=zf_{n+1}+f_n$, $f_2=z^{-1}$ and $f_3=1$. We have $\deg(f_n)=n-3$. 
It follows that if $D$ is the connected sum of $\rho^+_0$ positive torus links $T_p(\mu_j,2)$ and $\rho^-_0$ negative torus links $T_p(-\nu_i,2)$, then 
\begin{eqnarray}
E(D)&=&s(D)-w(D)-1,\quad p_0^h(D)=(-1)^{c^-(D)}z^{c(D)-\rho^+_0-3\rho^-_0},\label{TorusConnect1}\\
e(D)&=&-s(D)-w(D)+1,\quad  p_0^\ell(D)=(-1)^{\rho^+_0+\rho^-_0+c^-(D)}z^{c(D)-3\rho^+_0-\rho^-_0}.
\label{TorusConnect2}
\end{eqnarray}
If in addition, the connected sum components of $D$ also include $\kappa^+$ positive torus links $T_o(2\alpha_j,2)$ and $\kappa^-$ negative torus links $T_o(-2\beta_i,2)$, then 
\begin{eqnarray}
E(D)&=&s(D)-w(D)-1-2r^-(D),\quad p_0^h(D)\in (-1)^{c^-(D)}F,\label{TorusConnect3}\\
e(D)&=&-s(D)-w(D)+1+2r^+(D),\quad  p_0^\ell(D)\in (-1)^{\rho^+-\rho^-+\kappa^++\kappa^-+c^-(D)}F,
\label{TorusConnect4}
\end{eqnarray}
where $r^+(D)=-\kappa^++\sum \alpha_j$, $r^-(D)=-\kappa^-+\sum \beta_i$ and $F$ ($-F$) is the
set of all nonzero Laurent polynomials of $z$ whose coefficients are
non-negative (non-positive). 
}
\end{remark}

\medskip
\section{The cases when $\rho^++\rho^-\ge 2$, $\kappa^-=\kappa^+=0$}\label{Type3basic_case} 

\medskip
Before we tackle the general case, let us first understand the Type 3 pretzel links with $\kappa^+=\kappa^-=0$. Let us call such a Type 3 pretzel link a {\em basic} Type 3 pretzel link.  The following proposition has been established in the proof of \cite[Proposition 4.3]{Diao2021}.

\medskip
\begin{proposition}\cite{Diao2021}\label{P3.1}
If $D$ is a standard diagram of $\L\in P_3(\mu_1,\ldots,\mu_{\rho^+};0\vert 0;0)$ (so $\L$ is alternating), then 
\begin{eqnarray*}
E(\L)=2n-w(D)-1, &&p_0^h(\L)=z^{1+c(D)-2n}\in F,
\end{eqnarray*}  
and $e(\L)=-2n-w(D)+1+2\min\{n-1,\delta^+\}$ with
\begin{eqnarray*}
p_0^\ell(\L)&=&
\left\{
\begin{array}{ll}
(-1)^{1+\delta^+}z^{1+2\delta^++c(D)-6n} \in (-1)^{1+\delta^+}F &{\rm if}\ \delta^+<n-1;\\
-z^{c(D)-2n-1}\in -F &{\rm if}\ \delta^+\ge n-1.
\end{array}
\right.
\end{eqnarray*}
\end{proposition}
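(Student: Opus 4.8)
\medskip
\textbf{Proof proposal.} The plan is to separate the computation into the high-degree data $(E(\L),p_0^h(\L))$, which is insensitive to the lone crossings, and the low-degree data $(e(\L),p_0^\ell(\L))$, where the cyclic structure of the main cycle is responsible for the cap $\min\{n-1,\delta^+\}$. Throughout, $D$ is the standard diagram: each of the $\rho^+=2n$ parallel strips Seifert-smooths to two vertical strands, so $D$ has $s(D)=2n$ Seifert circles forming a single main cycle, and, since every crossing is positive, $w(D)=c(D)=\sum_j\mu_j$. I would record at the outset that the $\delta^+$ strips with $\mu_j=1$ are precisely the positive lone crossings of $D$.

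First I would dispose of the high-degree side. Since $D$ has no negative crossings we have $r^-(D)=0$, and the top-degree data should be unaffected by the positive lone crossings: when $\delta^+=0$ this is exactly the alternating formula of Remark~\ref{elementary_remark} (with $c^-(D)=\sigma^-(D)=0$ and $s(D)=2n$), giving $E(\L)=2n-w(D)-1$ and $p_0^h(\L)=z^{c(D)-2n+1}\in F$. To confirm this persists for $\delta^+>0$, I would run the skein relation (\ref{Skein1}) on one crossing of a multi-crossing strip: writing $P_\mu$ for the member carrying $\mu$ crossings in that strip, a Reidemeister~II cancellation in $D_-$ and the oriented smoothing in $D_0$ yield the Fibonacci-type recursion $H(P_\mu)=a^{-2}H(P_{\mu-2})+a^{-1}zH(P_{\mu-1})$, matching the structure of (\ref{HTp(n)}). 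Tracking the highest $a$-term through this recursion, with base member $P(1,\dots,1)=T_p(2n,2)$, keeps $p_0^h$ a monomial with positive coefficient and reproduces the stated $E(\L)$ and $p_0^h(\L)$.

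The low-degree side is the heart of the matter and the main obstacle. The value of $e(\L)$ is governed by how far the main cycle can be collapsed by MP moves on the positive lone crossings, which in the torus-connected-sum model of Remark~\ref{toruslink_formula} would contribute $2r^+(D)$ through (\ref{TorusConnect4}). The essential difference here is that the $2n$ Seifert circles form a \emph{cycle} rather than a connected sum, so the reductions cannot proceed all the way: I would show that collapsing the cycle (equivalently, taking the $D_0$-branch of the recursion above, which merges two adjacent Seifert circles and shortens the cycle) saturates after $n-1$ steps, one short of the $n$ available for a linear chain, since a cycle cannot be fully resolved without leaving one strip intact. When only $\delta^+<n-1$ lone crossings are present the count is instead $\delta^+$, so the effective number of reductions is $\min\{n-1,\delta^+\}$, yielding $e(\L)=-2n-w(D)+1+2\min\{n-1,\delta^+\}$.

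Finally I would pin down $p_0^\ell(\L)$ by carrying the leading low-degree coefficient through the same recursion, using the $\delta^+=0$ base case of Remark~\ref{elementary_remark} (which gives $p_0^\ell=-z^{c(D)-6n+1}$, as $\sigma^+(D)=2n$ there) together with the torus base values (\ref{HTp(n)})--(\ref{HTp(-n)}). The two regimes in the statement correspond exactly to whether the cyclic cap binds: for $\delta^+<n-1$ each smoothed lone crossing contributes one sign change and one shift of the $z$-degree, producing $(-1)^{1+\delta^+}z^{2\delta^++c(D)-6n+1}$, whereas for $\delta^+\ge n-1$ the cap forces the terminal term $-z^{c(D)-2n-1}$. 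The delicate point throughout is to verify that these leading terms do not cancel among the many states generated by the recursion; I would handle this by checking that all surviving contributions of minimal (respectively maximal) $a$-degree carry coefficients of a single sign, i.e.\ lie in $F$ or $-F$ as in (\ref{TorusConnect3})--(\ref{TorusConnect4}), so that no cancellation can occur and both $e(\L)$ and $E(\L)$ are genuinely attained.
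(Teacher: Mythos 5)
Your high-degree argument has the right skeleton (a skein recursion on a multi-crossing strip, with both branches landing at the same $a$-degree and no cancellation because all coefficients stay in $F$), but it is anchored at a misidentified base case, and the error is not cosmetic. The all-ones pretzel $P_3(1,\ldots,1;0\vert 0;0)$ with $2n$ parallel strips is \emph{not} $T_p(2n,2)$: in the Type 3 cycle the two components traverse the annulus in opposite directions, so this link is $T_o(2n,2)$ --- which is exactly why its standard diagram has $2n$ Seifert circles forming a cycle rather than $2$ parallel circles. The distinction is decisive here: by (\ref{HTp(n)}) one has $E(T_p(2n,2))=1-2n$ with $p^h=f_{2n+2}$, which for $n\ge 2$ contradicts the formula $E=2n-w(D)-1=-1$ that you are trying to propagate, whereas (\ref{HTo(2k)}) gives $E(T_o(2n,2))=-1$ and $p^h_0(T_o(2n,2))=z$, which is what your induction actually needs. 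Relatedly, your parenthetical claim that the $D_0$-branch ``merges two adjacent Seifert circles and shortens the cycle'' is false: an oriented smoothing never changes the Seifert circle decomposition. What really happens is that emptying a strip (the $D_-$ branch when $\mu_j=2$, or smoothing a lone crossing) breaks the cycle into a chain, turning the link into a connected sum of $T_p(\mu_i,2)$'s; Seifert circles are merged only by the diagrammatic MP move, which is not a skein branch.

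The more serious gap is in the low-degree half, which is the actual content of the proposition. You justify $e(\L)=-2n-w(D)+1+2\min\{n-1,\delta^+\}$ by ``how far the main cycle can be collapsed by MP moves,'' but MP moves control the number of Seifert circles (hence braid index upper bounds via Yamada's theorem), not HOMFLY-PT degrees; the coincidence of those two quantities is precisely what this proposition and the results built on it establish, so invoking it is circular. A genuine proof must also run the skein relation at the lone crossings (where $D_0$ breaks the cycle and is a connected sum handled by (\ref{TorusConnect1})--(\ref{TorusConnect2}), while $D_-$ creates a negative crossing and leaves the pretzel class), and must then decide by explicit comparison which branch carries the minimal $a$-degree, how the sign $(-1)^{1+\delta^+}$ accumulates while $\delta^+<n-1$, and why the answer saturates at $-z^{c(D)-2n-1}$ once $\delta^+\ge n-1$; none of that case analysis appears in your sketch, and it cannot be replaced by the cycle-versus-chain heuristic. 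Note finally that the present paper does not reprove this statement at all: it imports it from the proof of \cite[Proposition 4.3]{Diao2021}, where it is obtained by exactly this kind of induction with the $\pm F$ sign bookkeeping that you would still need to carry out.
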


\medskip
\begin{proposition}\label{P3.2}
If $D$ is a standard diagram of $\L\in P_3(\mu_1,\ldots,\mu_{\rho^+};-\nu_1,\ldots,-\nu_{\rho^-}\vert 0;0)$ such that $\rho^-+\rho^+=2n\ge 4$ and $\delta^+=0$, then
\begin{eqnarray}
E(\L)&=&2n-w(D)-1,\ p_0^h(\L)\in(-1)^{c^-(D)}F\\
e(\L)&=&-2n-w(D)+1,\ 
p^\ell_0(\L)\in(-1)^{1+c^-(D)}F.
\end{eqnarray}
\end{proposition}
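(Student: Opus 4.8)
The plan is to prove both equalities by showing that the general Morton--Franks--Williams degree bounds are attained, and to pin down the signs of the extreme coefficients by an induction in which the one-signedness of the leading terms (membership in $F$ or $-F$) rules out cancellation. First I would record the structural input: since $\kappa^+=\kappa^-=0$ the Seifert circle decomposition of $D$ is the main cycle alone, so $s(D)=2n$, and since $\delta^+=0$ together with $\nu_i\ge 2$ for all $i$ the diagram $D$ has \emph{no} lone crossings. Morton's inequalities (in the $a$-convention of \eqref{Skein1}) then give $E(\L)\le s(D)-w(D)-1=2n-w(D)-1$ and $e(\L)\ge -s(D)-w(D)+1=-2n-w(D)+1$, so it remains only to show these bounds are achieved with $p^h(\L)\in(-1)^{c^-(D)}F$ and $p^\ell(\L)\in(-1)^{1+c^-(D)}F$. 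I would run the argument by induction on the total number of negative crossings $N=\sum_i\nu_i$, the base case $\rho^-=0$ being exactly Proposition \ref{P3.1} with $\delta^+=0$.

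For the inductive step (so $\rho^-\ge 1$) I would apply \eqref{Skein2} to one crossing of a negative strip, $H(D)=a^2H(D_+)-az\,H(D_0)$, where $D_+$ has that strip shortened by two half-twists (after a Reidemeister~II cancellation), with $w(D_+)=w(D)+2$ and $c^-(D_+)=c^-(D)-2$, while $D_0$ has it shortened by one, with $w(D_0)=w(D)+1$ and $c^-(D_0)=c^-(D)-1$. When a reduced diagram still has all strips of length $\ge 2$ it stays in the class of the Proposition with the cycle intact and $s=2n$, so the inductive hypothesis applies; one computes $2+E(D_+)=1+E(D_0)=2n-w(D)-1$, so both terms feed the same top $a$-degree and $p^h(\L)=p^h(D_+)-z\,p^h(D_0)$. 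The crucial point is the sign bookkeeping: by hypothesis $p^h(D_+)\in(-1)^{c^-(D)}F$, while $-z\,p^h(D_0)\in -z\,(-1)^{c^-(D)-1}F=(-1)^{c^-(D)}F$, so the two contributions lie in the \emph{same} one-signed set and therefore add rather than cancel, giving $p^h(\L)\in(-1)^{c^-(D)}F$ (in particular nonzero) and $E(\L)=2n-w(D)-1$. The argument for $e(\L)$ is symmetric: both $2+e(D_+)$ and $1+e(D_0)$ equal $-2n-w(D)+1$, and the parities arrange that $p^\ell(D_+)$ and $-z\,p^\ell(D_0)$ both lie in $(-1)^{1+c^-(D)}F$. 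This non-cancellation mechanism, forced by $F$ and $-F$, is the heart of the proof.

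It remains to treat the two boundary situations created by the recursion, whose contributions are combined with the in-class ones at each extreme. If a strip is reduced all the way to length $0$ (the $D_+$ term when $\nu_1=2$), that edge is deleted from the main cycle and $D_+$ becomes the connected sum of the remaining $2n-1$ two-strand torus links $T_p(\mu_j,2)$ and $T_p(-\nu_i,2)$, with $s(D_+)=2n$ preserved; by multiplicativity (Remark \ref{remark2.2}(i)) together with \eqref{HTp(n)}--\eqref{HTp(-n)} (equivalently \eqref{TorusConnect1}--\eqref{TorusConnect2}) its extreme $a$-coefficients are products of Fibonacci-type polynomials with one-signed coefficients, landing in $(-1)^{c^-(D_+)}F$ and $(-1)^{1+c^-(D_+)}F$ respectively --- and it is precisely the \emph{odd} number $2n-1$ of summands that produces the $(-1)^{1+c^-}$ sign at the bottom. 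If instead a strip is reduced to a single (lone) crossing (the $D_0$ term when $\nu_1=2$, or the $D_+$ term when $\nu_1=3$), then an MP move (Remark \ref{MP_move}) yields a diagram with $s=2n-1$, and Morton's bound applied to \emph{that} diagram shows this term contributes strictly inside the open interval between $-2n-w(D)+1$ and $2n-w(D)-1$, hence is subdominant at both extremes and may be discarded. In every case the surviving contributions at each extreme $a$-degree form a single family of same-signed terms, and the conclusion follows.

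I expect the main obstacle to be the geometric bookkeeping underlying this case analysis rather than the skein computation itself. One must verify carefully that shortening a strip while keeping its length $\ge 2$ really leaves the Type~3 cycle structure (and hence $s=2n$) unchanged, that deleting a strip turns the cycle into a linear chain that is genuinely a connected sum of two-strand torus links, and that a single lone crossing always admits an MP move reducing $s$ by exactly one. Getting these structural claims and the attendant degree and sign computations exactly right --- so that at each extreme $a$-degree precisely one family of same-signed contributions remains --- is where the real care is needed; once they are in place, the non-cancellation supplied by $F$ does the rest.
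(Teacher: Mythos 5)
Your overall architecture (skein induction, reduction to connected sums of torus links at the boundary of the recursion, and non-cancellation enforced by the one-signed classes $F$ and $-F$) is essentially the paper's, and your top-degree analysis is correct. But there is a genuine gap at the bottom degree: your claim that a term whose strip has been reduced to a single lone crossing is ``subdominant at both extremes'' is false. The lone crossings produced by your recursion are \emph{negative} (they arise from shortening negative strips), and a negative lone crossing improves only the upper Morton bound, never the lower one: removing a negative lone crossing by an MP move gives a diagram with $s'=s-1$ but $w'=w+1$, so $s'-w'-1=s-w-3$ (a gain of $2$ at the top) while $-s'-w'+1=-s-w+1$ (no gain at the bottom). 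This is not just a weakness of the bound. For instance, when $\nu_i=2$ the smoothed diagram $D_0$ is a Type 3 diagram with one negative lone crossing, and its actual lowest degree satisfies $e(D_0)=-s(D_0)-w(D_0)+1$ (this can be read off from the mirror image via Proposition \ref{P3.4}, or checked on small examples), so that $1+e(D_0)=-2n-w(D)+1$ is exactly the target degree; the same happens for the $D_+$ term when $\nu_i=3$. Hence the terms you discard do contribute to the coefficient of $a^{e(\L)}$, and your proof of $e(\L)=-2n-w(D)+1$ and $p^\ell_0(\L)\in(-1)^{1+c^-(D)}F$ is incomplete: you would additionally need to determine the sign of $p^\ell_0$ of these lone-crossing diagrams and verify that it reinforces rather than cancels the connected-sum contribution. (It does, but that requires a separate argument; quoting Proposition \ref{P3.4} for it inside the paper's logical order would be circular, since the proof of Proposition \ref{P3.4} uses Proposition \ref{P3.2}.)

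The paper's proof avoids exactly this trap by treating the two extremes asymmetrically. It computes $e(\L)$ and $p^\ell_0(\L)$ by applying VP$^+$, i.e.\ \eqref{Skein1}, to a crossing in a \emph{positive} strip, so the lone crossings that arise are positive; by \cite[Lemma 3.1]{Diao2021} a positive lone crossing forces $e\ge -s-w+3$, strictly above the target, so those terms genuinely are negligible at the bottom and only the connected-sum term survives there. It then obtains $E(\L)$ and $p^h_0(\L)$ by the mirror of this argument, inducting on a negative strip, where negative lone crossings are harmless for the top degree --- which is precisely the part of your argument that works. Your proof can be repaired by adopting this directional split: keep your recursion on negative strips for $E$ and $p^h_0$, and rerun it on positive strips (with positive lone crossings appearing instead) for $e$ and $p^\ell_0$.
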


\begin{proof}
We can assume that $\rho^+>0$ and $\rho^->0$ since otherwise the result follows from Proposition \ref{P3.1}. Use induction on $\mu_{\rho^+}$, start with $\mu_{\rho^+}=2$ and apply VP$^{+}$ on a crossing in the $\mu_{\rho^+}$ strip. $D_-$ is the connected sum of $T_p(\mu_j,2)$, $1\le j\le \rho^+-1$ and $T_p(-\nu_i,2)$ ($1\le i\le \rho^-$) and $D_0$ has a positive lone crossing, hence by Remark \ref{toruslink_formula}  we have
\begin{eqnarray*}
-2+e(D_-)&=&-2-2n-(w(D)-2)+1=-2n-w(D)+1,\\ 
p_0^\ell(D_-)&\in &(-1)^{1+\sum \nu_i}F=(-1)^{1+c^-(D)}F.
\end{eqnarray*}
On the other hand, by \cite[Lemma 3.1]{Diao2021} (notice that the proof there applies to non-alternating link diagrams as well), $-1+e(D_0)\ge -1-s(D_0)-w(D_0)+3=-2n-w(D)+3>-2n-w(D)+1$ since $D_0$ has a positive lone crossing. It follows that $e(\L)=-2n-w(D)+1$ and
$
p_0^\ell(\L)=p_0^\ell(D_-)\in(-1)^{1+c^-(D)}F.
$
Similarly, if $\mu_{\rho^+}=3$, then $D_-$ has a positive lone crossing hence $-2+e(D_-)\ge  -2n-w(D)+3$, and we have $e(\L)=-1+e(D_0)=-2n-w(D)+1$ with $p_0^\ell(\L)=zp_0^\ell(D_0)\in(-1)^{1+c^-(D)}F.$ RLR for $\mu_{\rho^+}\ge 3$ in general. Here RLR stands for ``the rest is left to the reader".

\medskip
The proof for $E(\L)$ and $p^h_0(\L)$ is similar to the above by using induction on $\nu_1$ instead. RLR. 
\end{proof}

\medskip
The case $\rho^+=\rho^-=1$ is a special one and needs to be addressed separately.

\begin{proposition}\label{P3.3}
If $\rho^+=\rho^-=1$, then 
\begin{eqnarray}
&&E(\L)=e(\L)=0,\quad
p_0^h(\L)=p_0^\ell(\L)=1,\  {\rm if}\ |\mu_1-\nu_1|=1,\label{P3.3e1}\\
&&\left\{
\begin{array}{ll}
E(\L)&=2n-w(D)-1,\\
p^h_0(\L)&=
\left\{
\begin{array}{ll}
z^{\mu_1-\nu_1-1},& \ {\rm if}\ \mu_1-\nu_1>1\ {\rm or}\  \mu_1-\nu_1=0;\\
(-1)^{\mu_1+\nu_1}z^{-\mu_1+\nu_1-3},& \ {\rm if}\ \mu_1-\nu_1<-1,
\end{array}
\right.
\end{array}
\right.
\label{P3.3e2}\\
&&\left\{
\begin{array}{ll}
e(\L)&=-2n-w(D)+1,\\
p^\ell_0(\L)&=
\left\{
\begin{array}{ll}
-z^{\mu_1-\nu_1-3},& \ {\rm if}\ \mu_1-\nu_1>1;\\
(-1)^{1+\mu_1+\nu_1}z^{-\mu_1+\nu_1-1},& \ {\rm if}\ \mu_1-\nu_1<-1,\ {\rm or}\  \mu_1-\nu_1=0.
\end{array}
\right.
\end{array}
\right.\label{P3.3e3}
\end{eqnarray}
\end{proposition}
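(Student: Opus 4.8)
The plan is to avoid direct computation by first identifying the link, and then reading off all four quantities from the torus-link data already recorded in Remark \ref{toruslink_formula}. A member of $P_3(\mu_1;-\nu_1\vert 0;0)$ consists of exactly two parallel twist regions, one carrying $\mu_1$ positive and the other $\nu_1$ negative crossings, and these two regions sit in series along a single unknotted band: the two Seifert circles of the main cycle cobound an annulus, and each strip is a twist region of that annulus. Hence an isotopy merges them into a single parallel twist region of $\mu_1-\nu_1$ crossings, and so I claim $\L=T_p(\mu_1-\nu_1,2)$ (with the convention of Remark \ref{toruslink_formula} that a negative argument records negative crossings, and with $T_p(\pm1,2)$ the unknot and $T_p(0,2)$ the two-component unlink). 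The computation then splits into the three regimes $\mu_1-\nu_1\ge 2$, $\mu_1-\nu_1\le -2$, and $|\mu_1-\nu_1|\le 1$ exactly as in the statement.

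With the identification in hand, the cases with $|\mu_1-\nu_1|\ge 2$ are pure bookkeeping. For $\mu_1-\nu_1=n\ge 2$ I would apply (\ref{HTp(n)}): the top $a$-term is $f_{n+2}a^{1-n}$ and the bottom is $-f_na^{-1-n}$, so $E=1-n$ and $e=-1-n$, matching $2\cdot 1-w(D)-1$ and $-2\cdot 1-w(D)+1$ with $w(D)=\mu_1-\nu_1$. Using $\deg f_k=k-3$ together with the fact that each $f_k$ has leading $z$-coefficient $1$, I get $p^h_0=z^{n-1}=z^{\mu_1-\nu_1-1}$ and $p^\ell_0=-z^{n-3}=-z^{\mu_1-\nu_1-3}$, which are the $\mu_1-\nu_1>1$ branches of (\ref{P3.3e2}) and (\ref{P3.3e3}). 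For $\mu_1-\nu_1\le -2$ I would instead use (\ref{HTp(-n)}) with $n=\nu_1-\mu_1\ge 2$; the global factor $(-1)^n=(-1)^{\mu_1+\nu_1}$ is precisely what produces the sign $(-1)^{\mu_1+\nu_1}$ in $p^h_0$ and $(-1)^{1+\mu_1+\nu_1}$ in $p^\ell_0$, and after rewriting $n=-\mu_1+\nu_1$ the $z$-exponents line up with the stated $-\mu_1+\nu_1-3$ and $-\mu_1+\nu_1-1$.

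The two degenerate regimes I would dispatch by hand. When $|\mu_1-\nu_1|=1$ the merged region is a single crossing, so $\L$ is the unknot and $H(\L)=1$, giving $E=e=0$ and $p^h_0=p^\ell_0=1$, i.e. (\ref{P3.3e1}). When $\mu_1=\nu_1$ the two regions cancel and $\L$ is the two-component unlink; in the normalization (\ref{Skein1}) its HOMFLY-PT polynomial is $(a-a^{-1})/z=z^{-1}a-z^{-1}a^{-1}$ (obtained, e.g., by resolving one crossing of the positive Hopf link $T_o(2,2)$ via (\ref{Skein1}) and (\ref{HTo(2k)})). This yields $E=1$, $e=-1$, $p^h_0=z^{-1}$ and $p^\ell_0=-z^{-1}$, agreeing with the $\mu_1-\nu_1=0$ branches of (\ref{P3.3e2}) and (\ref{P3.3e3}), where indeed $(-1)^{1+\mu_1+\nu_1}=-1$.

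The single nontrivial point, and the place I expect the real work to be, is the geometric claim that the two parallel strips merge into one twist region with the \emph{parallel} orientation (so that the result is genuinely $T_p$ and not $T_o$), and keeping the writhe and the sign $(-1)^{\mu_1+\nu_1}$ straight when invoking (\ref{HTp(-n)}); everything after that is routine given $\deg f_k=k-3$. If one prefers to avoid the isotopy argument, the same formulas can be produced exactly as in Proposition \ref{P3.2}, by induction on $\mu_1$ (or on $\nu_1$) using the procedure VP$^+$: one application yields a $D_-$ that is $T_p(-\nu_1,2)$ and a $D_0$ with one fewer positive crossing, and the degree comparisons are identical to those carried out there, with the caveat that the degenerate cases $|\mu_1-\nu_1|=1$ and $\mu_1=\nu_1$ must still be separated out since MFW degenerates there.
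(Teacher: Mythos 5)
Your proposal is correct and follows essentially the same route as the paper: the paper's proof likewise observes that $D$ is the trivial knot when $|\mu_1-\nu_1|=1$, the two-component trivial link when $\mu_1=\nu_1$, and otherwise reduces to the torus link $T_p(\mu_1-\nu_1,2)$, after which everything follows from (\ref{HTp(n)}) and (\ref{HTp(-n)}). Your write-up simply makes explicit the degree/sign bookkeeping (via $\deg f_k=k-3$ and $(-1)^{\nu_1-\mu_1}=(-1)^{\mu_1+\nu_1}$) and the unlink computation that the paper leaves to the reader.
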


\begin{proof}
When $|\mu_1-\nu_1|=1$, $D$ is the trivial knot. When $|\mu_1-\nu_1|=0$, $D$ is the trivial link with $2$ components. In all other cases, $D$ reduces to the torus link $T_p(\mu_1-\nu_1,2)$, and the formulas follow from (\ref{HTp(n)}) and (\ref{HTp(-n)}).
\end{proof}

\medskip
We will now handle the other case not covered by Propositions \ref{P3.2} and \ref{P3.3}, namely the case where $\delta^+>0$, $\rho^->0$ and $2n=\rho^++\rho^-\ge 4$. We divide this case into two smaller cases: (i) $\delta^+\le \rho^-$ and (ii) $\delta^+>\rho^-$. We shall denote by $\tilde{D}$ the diagram obtained from $D$ by performing the moves (called the {\em N-moves}) as shown in Figure \ref{Type3fig}. 
\begin{figure}[htb!]
\includegraphics[scale=.9]{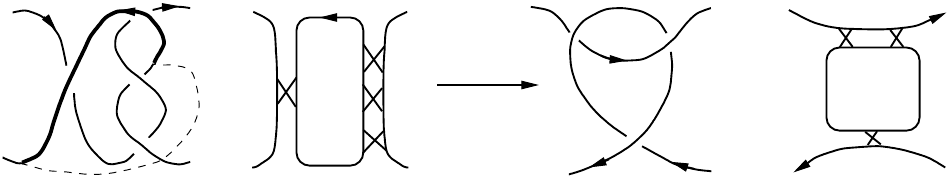}
\caption{ The depiction of an N-move: the strand to be re-routed is highlighted. An N-move changes the sum of two tangles $(+1)+(-\nu_j,0)$ to a rational tangle $(\nu_j-1,1)$.
}
\label{Type3fig} 
\end{figure}

\medskip
In case (i), we have $s(\tilde{D})=s(D)-\delta^+$, $w(\tilde{D})=w(D)-\delta^+$ and we claim the following.

\medskip
\begin{proposition}\label{P3.4}
Let $D$ be  a standard diagram of $\L\in P_3(\mu_1,\ldots,\mu_{\rho^+};-\nu_1,\ldots,-\nu_{\rho^-}\vert 0;0)$ such that $0<\delta^+\le \rho^-$. Let $\rho^+_0=\rho^+-\delta^+\ge 0$, $\rho^-_0=\rho^--\delta^+\ge 0$, $\rho^++\rho^-=2n$ (so $\rho^+_0+\rho^-_0=2n-2\delta^+$), then we have
\begin{eqnarray}
&E(\L)=e(\L)=0, \quad p^h_0(\L)=p^\ell_0(\L)=1,&{\rm if}\ \rho^+=\rho^-=\delta^+=1,\ \nu_1=2,\label{P3.4e3}\\
&\left\{\begin{array}{ll}
E(\L)=\nu_1, &p^h_0(\L)=(-1)^{\nu_1-1}z^{\nu_1-4}\\
e(\L)=\nu_1-2, &p^\ell_0(\L)=(-1)^{\nu_1}z^{\nu_1-2}
\end{array}
\right.
&{\rm if}\ \rho^+=\rho^-=\delta^+=1,\ \nu_1>2,\label{P3.4e4}\\
&
\left\{\begin{array}{ll}
E(\L)=2n-w(D)-1, &p^h_0(\L)\in(-1)^{c^-(D)}F,\\
e(\L)=-w(D)-1, &p^\ell_0(\L)\in(-1)^{1+\delta^++c^-(D)}F,
\end{array}
\right.
&{\rm if}\ \rho^+=\rho^-=\delta^+>1.\label{P3.4e5}
\end{eqnarray}
For other cases, we have
\begin{eqnarray}
&&
\left\{\begin{array}{ll}
E(\L)=2n-w(D)-1,&
p_0^h(\L)\in(-1)^{c^-(D)}F,\\
e(\L)=-2n-w(D)+1+2\delta^+, & p^\ell_0(\L)\in (-1)^{1+\delta^++c^-(D)}F.
\end{array}
\right.
\label{P3.4e1}
\end{eqnarray}
\end{proposition}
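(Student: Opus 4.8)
The plan is to treat the two extremal ends of the $a$-degree separately, because the lone positive crossings raise the bottom degree $e(\L)$ while leaving the top degree $E(\L)$ at its generic maximum. Throughout I will use that, for a standard basic Type 3 diagram, smoothing every crossing of a parallel strip merely continues the two long strands, so the main cycle contributes one Seifert circle per strip and $s(D)=2n$; consequently $2n-w(D)-1=s(D)-w(D)-1$ and $-2n-w(D)+1=-s(D)-w(D)+1$ are exactly the Morton bounds $E(\L)\le s(D)-w(D)-1$ and $e(\L)\ge -s(D)-w(D)+1$ evaluated on $D$. The two-strip lines (\ref{P3.4e3}) and (\ref{P3.4e4}) need no new argument: there $\L\in P_3(1;-\nu_1\vert 0;0)$ is already covered by Proposition \ref{P3.3} with $\mu_1=1$, and substituting $n=1$, $w(D)=1-\nu_1$ into (\ref{P3.3e1})--(\ref{P3.3e3}) reproduces them verbatim.

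For the top degree I would run the same skein induction as in Proposition \ref{P3.2}, inducting on the sizes of the multi-crossing strips by applying VP$^+$ and VP$^-$ to a crossing of a strip with at least two crossings. At each step $D_-$ (or $D_+$) is again a basic Type 3 diagram with one fewer crossing, while $D_0$ acquires a lone crossing and therefore, by \cite[Lemma 3.1]{Diao2021}, contributes only at $a$-degrees strictly below $2n-w(D)-1$. The terminal diagram of the induction is a connected sum of the torus links $T_p(\mu_j,2)$ and $T_p(-\nu_i,2)$, whose top data is read off from (\ref{TorusConnect1}). The crucial point is that neither an MP move nor an N-move on a positive lone crossing changes $s(D)-w(D)$, so $E$ is never pushed below $s(D)-w(D)-1$; hence $E(\L)=2n-w(D)-1$ and, tracking signs through $F$, $p^h_0(\L)\in(-1)^{c^-(D)}F$ with $c^-(D)=\sum\nu_i$. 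I expect this half to be routine.

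The bottom degree is the crux and is where the hypothesis $\delta^+\le\rho^-$ is used. Because each lone positive crossing can be paired with a distinct negative strip, an N-move absorbs it, replacing $(+1)+(-\nu_j,0)$ by $(\nu_j-1,1)$ and decreasing both $s$ and $w$ by one; each such move raises the Morton lower bound, since $-s(\tilde D)-w(\tilde D)+1=-s(D)-w(D)+1+2$. The cyclic structure of the main cycle caps the number of available N-moves at $\min\{n-1,\delta^+\}$, exactly as in Proposition \ref{P3.1}. In every configuration other than $\rho^+=\rho^-=\delta^+$ one has $\delta^+\le n-1$, so all $\delta^+$ moves are performed, $s(\tilde D)=s(D)-\delta^+$, and the bound becomes $e(\L)\ge -2n-w(D)+1+2\delta^+$, which is (\ref{P3.4e1}); in the balanced case $\rho^+=\rho^-=\delta^+=n$ the cap forces one fewer move, giving $e(\L)\ge -2n-w(D)+1+2(n-1)=-w(D)-1$, which is (\ref{P3.4e5}). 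To see these bounds are attained with the asserted sign, I would track the bottom-degree leading term through the same reduction to a connected sum of torus links: the terminal diagram supplies the leading coefficient via (\ref{TorusConnect2}), each of the $\delta^+$ absorbed lone crossings contributes one sign flip, and keeping every partial leading term inside the single class $(-1)^{1+\delta^++c^-(D)}F$ guarantees that no cancellation occurs, so that $p^\ell_0(\L)$ is a nonzero element of that class.

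The hard part will be this last step: controlling the sign bookkeeping across the reduction and, above all, the balanced case $\rho^+=\rho^-=\delta^+$, where the single unavailable N-move is responsible for the drop from $-w(D)+1$ to $-w(D)-1$ and must be verified to force a genuine lowering of the minimal degree rather than a mere accidental cancellation. I would isolate this by treating the base of the induction when $\rho^+_0=\rho^-_0=0$ separately, exhibiting the surviving leading term explicitly and checking its membership in $(-1)^{1+\delta^++c^-(D)}F$. A secondary nuisance is ensuring that each intermediate diagram produced by the VP moves either still satisfies the hypothesis $0<\delta^+\le\rho^-$ or falls under Proposition \ref{P3.1}, \ref{P3.2} or \ref{P3.3}, so that the induction actually closes; the top-degree estimate and the Morton lower bound on $e(\L)$ are, by comparison, mechanical.
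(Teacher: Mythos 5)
Your reduction of (\ref{P3.4e3}) and (\ref{P3.4e4}) to Proposition \ref{P3.3} is correct, and your overall skeleton (skein induction for the extreme $a$-degrees, N-moves plus the Morton bound, sign-class bookkeeping to rule out cancellation) is the same skeleton the paper uses; the problems are at the two places you yourself call the crux. The balanced case (\ref{P3.4e5}) is the most serious gap. You invoke a cap of $\min\{n-1,\delta^+\}$ on the number of N-moves ``exactly as in Proposition \ref{P3.1},'' but no such cap can serve as an input: in the balanced case the $\delta^+$ lone crossings alternate with the $\delta^+$ negative strips around the cycle, each lone crossing sits next to its own negative strip, and each N-move is a local isotopy, so nothing visibly prevents performing all $\delta^+$ of them; what must fail is your accounting that every move lowers $s$ by one (once the cycle is exhausted the re-routed strands close into new Seifert circles instead of merging into cycle circles). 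Indeed, if all $\delta^+$ moves lowered $s$ and $w$ by one each, Morton would force $e(\L)\ge -w(D)+1$, which is false: for $\L\in P_3(1,1;-2,-2\vert 0;0)$ a direct skein computation gives $H(\L)=z^{-1}a^5-(z+z^{-1})a^3-za$, so $e(\L)=1=-w(D)-1<-w(D)+1=3$. Thus the ``cap'' is logically equivalent to the statement being proved, not something you may quote. Your attainment mechanism also has no terminal object here: in the balanced case there are no multi-crossing positive strips, so the proposed reduction never reaches a connected sum of torus links and the induction has no base. The paper supplies the missing input by a move of a different kind: the N-moves turn the balanced diagram into an \emph{alternating} Type M2 diagram $\tilde{D}$ with $s(\tilde{D})=2+\delta^+$ (not $2n-\delta^+$) and a negative lone crossing, after which the known formulas of \cite[Theorem 4.7]{Diao2021} for alternating diagrams apply; some substitute for this step is required and your proposal contains none.

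In the remaining (generic) case, your statement that ``keeping every partial leading term inside the single class $(-1)^{1+\delta^++c^-(D)}F$ guarantees that no cancellation occurs'' is the conclusion, not an argument. The actual content of the proof is to show that whenever the two skein contributions $a^{\pm 2}H(D_{\mp})$ and $a^{\pm 1}zH(D_0)$ land on the same extreme $a$-degree, their leading $z$-terms lie in one class; this is exactly where the paper spends most of its effort, in the sub-case $\rho^+_0=\rho^-_0=1$, split further according to $\mu_1-\nu_1\in\{1,-1,0\}$, $\mu_1-\nu_1\ge 2$, $\mu_1-\nu_1\le -2$, and $\mu_1=2$ versus $\mu_1>2$, with several instances where both terms genuinely contribute at the same degree and only an explicit sign check prevents cancellation. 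You cannot bypass this case analysis. Finally, a smaller but real repair: your top-degree argument suppresses $D_0$ by \cite[Lemma 3.1]{Diao2021}, but a \emph{positive} lone crossing raises the lower Morton bound and does nothing to the upper one, so the suppression is valid only when the induction is run with VP$^-$ on negative strips, making the lone crossing acquired by $D_0$ negative (this is how the paper proceeds); applied to VP$^+$ on a positive strip of two crossings the claim is false, as Proposition \ref{P3.1} already shows, since its diagrams carry positive lone crossings yet attain $E=2n-w(D)-1$.
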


\begin{proof} If $\rho^+=\rho^-=\delta^+=1$, then $D$ is the unknot if $\nu_1=2$, otherwise $D$ is the torus link $T_p(-(\nu_1-1),2)$. Thus (\ref{P3.4e3}) and (\ref{P3.4e4}) hold. If $\rho^+=\rho^-=\delta^+\ge 2$, then 
$D$ is reduced to an alternating link diagram $\tilde{D}$. $\tilde{D}$ is a Type M2 link diagram as defined in \cite{Diao2021} and it has a negative lone crossing (there are multiple such crossings but only one can be used). We have $s(\tilde{D})=2+\delta^+$, $c(\tilde{D})=c(D)-\delta^+$ and $w(\tilde{D})=w(D)-\delta^+$. By the proof of \cite[Theorem 4.7]{Diao2021}, we have
\begin{eqnarray*}
E(\L)&=&E(\tilde{D})=s(\tilde{D})-w(\tilde{D})-3=2n-w(D)-1,\ p_0^h(\L)\in
(-1)^{c^-(D)}F,\\
e(\L)&=&e(\tilde{D})=-s(\tilde{D})-w(\tilde{D})+1=-w(D)-1,\ p_0^\ell(\L)\in
(-1)^{1+\delta^++c^-(D)}F.
\end{eqnarray*}
This proves (\ref{P3.4e5}).

\medskip
For the other cases,
we use double induction on $\delta^+\ge 1$ and $\nu_{\rho^-}\ge 2$, starting at $\delta^+= 1$ and $\nu_{\rho^-}= 2$. 
Apply VP$^-$ to a negative crossing in the $\nu_{\rho^-}$ strip.  $D_+$ is the connected sum of $T_p(\mu_j)$ ($1\le j\le \rho^+_0$) and $T_p(-\nu_i,2)$ ($1\le i\le \rho^--1$). Notice that $c(D_+)=c(D)-2-\delta^+$, $w(D_+)=w(D)+2-\delta^+$ and $s(D_+)=s(D)-\delta^+=2n-\delta^+$. Combining these with (\ref{TorusConnect1}) and (\ref{TorusConnect2}) (keep in mind that $\rho^-_0$ in (\ref{TorusConnect1}) is $\rho^--1$ here), 
we have  
\begin{eqnarray}
&&\left\{
\begin{array}{lll}
2+E(D_+)&=&2n-w(D)-1, \ p^h_0(D_+)\in (-1)^{c^-(D)}F,\\
2+e(D_+)&=&-2n-w(D)+1+2\delta^+, \  p^\ell_0(D_+) \in (-1)^{1+\delta^++c^-(D)}F.
\end{array}\right.\label{P3.4Torus}
\end{eqnarray}
Let us consider the case $\delta^+=1$ first.
The case $\rho^+_0=0$ and $\rho^-_0=0$ is the case $\rho^+=\rho^-=\delta^+=1$ already considered above. The remaining cases are (a) $\rho^+_0\ge 2$; (b) $\rho^-_0\ge 2$ and (c) $\rho^+_0=\rho^-_0=1$. 

\medskip
If $\rho^+_0\ge 2$ or $\rho^-_0\ge 2$, then either Proposition \ref{P3.1} applies to $D_0$ (in the case that $\rho^++\rho^-=4$) or Proposition \ref{P3.2} applies to $D_0$ (in the case that $\rho^++\rho^-\ge 6$). Actually, in the case that $\rho^+=1$ and $\rho^-=3$, we have to apply Proposition \ref{P3.1} to the mirror image of $D_0$. But in each case, we have
\begin{eqnarray*}
1+E(D_0)&=&1+(2n-2)-w(D_0)-1=2n-w(D)-3,\\
1+e(D_0)&=&1-(2n-2)-w(D_0)+1=-2n-w(D)+1+2\delta^+,\\
-zp^\ell_0(D_0)&\in&-z(-1)^{1+c^-(D_0)}F =(-1)^{c^-(D)}F=(-1)^{1+\delta^++c^-(D)}F.
\end{eqnarray*}
Direct comparison of the above with (\ref{P3.4Torus}) then leads to (\ref{P3.4e1}).
This proves the case of $\nu_{\rho^-}=2$. If $\nu_{\rho^-}=3$, $D_+$ is the same as $D_0$ in the case 
of $\nu_{\rho^-}=2$ hence $2+E(D_+)=2+(2n-2)-(w(D)+2)-1=2n-w(D)-3$ so $E(\L)=1+E(D_0)$ with 
$
p^h_0(\L)=-zp^h_0(D_0)
$
as given in (\ref{P3.4e1}).
 On the other hand, we again have $2+e(D_+)=1+e(D_0)=-2n-w(D)+3$, and comparison of $p^\ell_0(D_+)$ with $-zp^\ell_0(D_0)$ leads to $p^\ell_0(\L)=-zp^\ell_0(D_0)$ as given in (\ref{P3.4e1}).
Assuming now that the statement holds for $\nu_{\rho^-}\ge q\ge 3$, then for $\nu_{\rho^-}=q+1$, it is straight forward to see that $p^h_0(\L)$ and $p^\ell_0(\L)$ are both contributed by $D_0$ with the desired formulas for $p^h_0(\L)$ and $p^\ell_0(\L)$. 

\medskip
Now consider the remaining case $\rho^+_0=\rho^-_0=1$ (so $\rho^+=\rho^-=2$). Start with $\nu_{2}=2$ and apply VP$^-$ to a negative crossing in the $\nu_2$ strip. Proposition \ref{P3.3} applies to $D_0$, and  $D_+$ simplifies to $T_p(\mu_1,2)\#T_p(-\nu_1,2)$. Keep in mind that in this case $2n=4$, $w(D)=\mu_1-\nu_1-1$, $c(D)=\mu_1+\nu_1+3$. Notice also that in this case $c(D)-1-2n-2\rho^+_0=c(D)-2n-3=c(D)-7$.  We have
\begin{eqnarray*}
&2+E(D_+)=4-\mu_1+\nu_1=2n-w(D)-1, &p_0^h(D_+)=(-1)^{\nu_1}z^{\mu_1+\nu_1-4} \in (-1)^{c^-(D)}F,\\
&2+e(D_+)=-\mu_1+\nu_1=-2n-w(D)+3,&p_0^\ell(D_+)=(-1)^{\nu_1}z^{\mu_1+\nu_1-4}\in (-1)^{c^-(D)}F,
\end{eqnarray*}
and the following results concerning $D_0$:

\medskip\noindent
(i) $\mu_1-\nu_1=1$. In this case $D_0$ is the unknot and $w(D)=0$, hence $2+e(D_+)=-2n-w(D)+3=-1<1+e(D_0)=1+E(D_0)=1<3=2n-w(D)-1=2+E(D_+)$, so in this case (\ref{P3.4e1}) holds.\\
\noindent
(ii) $\mu_1-\nu_1=-1$. In this case $D_0$ is also the unknot and $w(D)=-2$. We have
\begin{eqnarray*}
1+E(D_0)&=&1<5=2+E(D_+)=2n-w(D)-1,\\
1+e(D_0)&=&1=2+e(D_+)=-2n-w(D)+3.
\end{eqnarray*}
On the other hand, $p^\ell_0(D_+)=(-1)^{\nu_1}z^{\mu_1+\nu_1-4}\in(-1)^{c^-(D)}F$ 
and $-zp^\ell(D_0)=-z$, so if $\mu_1\ge 3$, then $e(\L)=-2n-w(D)+3$ with $p^\ell_0(D_+)=p^\ell_0(\L)\in(-1)^{c^-(D)}F$.
If $\mu_1=2$, we have $p^\ell(D_+)=-z$ hence
$e(\L)=-2n-w(D)+3$ with 
$p^\ell_0(\L)=-2z\in (-1)^{1+\delta^++c^-(D)}F.$
So (\ref{P3.4e1}) holds.\\
\noindent
(iii) $\mu_1-\nu_1=0$. In this case $D_0$ is the trivial link with two components and $w(D)=-1$, hence $1+E(D_0)=2<4=2n-w(D)-1$. On the other hand, $1+e(D_0)=0=-2n-w(D)+3$ and $-zp^\ell_0(D_0)=1$. Thus if $c(D)>7$ then $e(\L)=-2n-w(D)+3$ with $p^\ell_0(\L)=p^\ell_0(D_+)\in(-1)^{c^-(D)}F$.
If  $c(D)=7$ then $\mu_1=\nu_1=2$ and $p^\ell_0(D_+)\in(-1)^{c^-(D)}F$ as well.
Thus we have $e(\L)=-2n-w(D)+3$ with 
$ p^\ell_0(\L)=2\in (-1)^{1+\delta^++c^-(D)}F$. 
Thus (\ref{P3.4e1}) holds.\\
\noindent
(iv) $\mu_1-\nu_1\ge 2$. In this case $D_0$ simplifies to $T_p(\mu_1-\nu_1,2)$ and $w(D)=\mu_1-\nu_1-1$, hence 
\begin{eqnarray*}
1+E(D_0)&=&1+2-(\mu_1-\nu_1)-1=2-\mu_1+\nu_1<4-\mu_1+\nu_1=2n-w(D)-1=2+E(D_+),\\
1+e(D_0)&=&1-2-(\mu_1-\nu_1)+1=-\mu_1+\nu_1=-2n-w(D)+3=2+e(D_+).
\end{eqnarray*}
Furthermore, we have
$
-zp^\ell_0(D_0)=z^{\mu_1-\nu_1-2}=z^{c(D)-2\nu_1-5}$. Since $\nu_1\ge 2$, $c(D)-2\nu_1-5\le c(D)-9<c(D)-7$ hence (\ref{P3.4e1}) holds with $E(\L)=2+E(D_+)=2n-w(D)-1$, $p^h(\L)=p^h(D^+)\in (-1)^{c^-(D)}F$, $e(\L)=-2n-w(D)+3$ and $p^\ell_0(\L)=p^\ell_0(D_+)\in (-1)^{c^-(D)}F=(-1)^{1+\delta^++c^-(D)}F$. \\
\noindent
(v) $\mu_1-\nu_1\le -2$. In this case $\nu_1-\mu_1\ge 2$ so $D_0$ simplifies to $T_p(-(\nu_1-\mu_1),2)$ and we still have 
\begin{eqnarray*}
1+E(D_0)&=&1+2-(\mu_1-\nu_1)-1=2-\mu_1+\nu_1<4-\mu_1+\nu_1=2n-w(D)-1=2+E(D_+),\\
1+e(D_0)&=&1-2-(\mu_1-\nu_1)+1=-\mu_1+\nu_1=-2n-w(D)+3=2+e(D_+).
\end{eqnarray*}
Thus the portion of (\ref{P3.4e1}) concerning $E(\L)$ holds. Furthermore, this time we have
$$
-zp^\ell_0(D_0)=(-1)^{\nu_1-\mu_1}z^{\nu_1-\mu_1}=(-1)^{\mu_1+c^-(D)}z^{c(D)-2\mu_1-3}.$$ 
If $\mu_1>2$, then $c(D)-2\mu_1-3<c(D)-7$ hence (\ref{P3.4e1}) holds. If $\mu_1=2$, then $c(D)-2\mu_1-3=c(D)-7$ hence $-zp^\ell_0(D_0)=(-1)^{1+\delta^++c^-(D)}z^{c(D)-7}=p^\ell_0(D_+)$. Thus $e(\L)=-2n-w(D)+3$ with $p^\ell_0(\L)=(-1)^{1+\delta^++c^-(D)}2z^{c(D)-7}=(-1)^{1+\delta^++c^-(D)}(1+\delta^+)z^{c(D)-7}$ so (\ref{P3.4e1}) holds.

\medskip
The above proves the case $\nu_2=2$. For $\nu_2= 3$, the above discussion applies to $D_0$ hence we have
\begin{eqnarray*}
1+E(D_0)&=&2n-w(D)-1,\ {\rm and} \ -zp^h_0(D_0)\in (-1)^{1+c^-(D_0)}F \\
1+e(D_0)&=&-2n-w(D)+3, \ {\rm and} \ -zp^\ell_0(D_0)\in (-1)^{c^-(D)}F.
\end{eqnarray*}
One needs to verify that $D_+=P_3(\mu_1,1;-\nu_1,-1\vert 0;0)=T_p(\mu_1-\nu_1,2)$ will not make contributions to the above. 
After this, we can use induction to prove the general case $\nu_2\ge 4$ and there it is fairly straight forward to see that $D_0$ always makes the sole contributions as we have demonstrated above. RLR.

\medskip
Finally, we use induction on $\delta^+$ to extend the result to any $\delta^+\ge 1$. 
Assuming that the statement of the proposition holds for some $\delta^+-1\ge 1$ and consider the case of $\delta^+$.
Apply VP$^-$ to a negative crossing in the $\nu_{\rho^-}$ strip. If $\nu_{\rho^-}=2$ then (\ref{P3.4Torus}) applies to $D_+$ and the induction hypothesis applies to $D_0$ which has $2n-2$ Seifert circles with $\delta^+-1$ lone crossings and $c^-(D_0)=c^-(D)-2$. We have
\begin{eqnarray*}
1+E(D_0)&=&1+(2n-2)-w(D_0)-1=2n-w(D)-3,\\
1+e(D_0)&=&1-(2n-2)-w(D_0)+1+2(\delta^+-1)=-2n-w(D)+1+2\delta^+,\\
-zp^\ell_0(D_0)&\in& (-1)^{1+\delta^++c^-(D)}F.
\end{eqnarray*}
We see that $E(\L)=2+E(D_+)=2n-w(D)-1$ with $p^h_0(\L)= p^h_0(D_+)\in(-1)^{c^-(D)}F$
by (\ref{TorusConnect1}). On the other hand, $2+e(D_+)=-2n-w(D)+1+2\delta^+$ with $p^\ell_0(D_+)\in (-1)^{1+\delta^+ + c^-(D)}$
by (\ref{P3.4Torus}). Thus,  $p^\ell_0(D_+)$ and $-zp^\ell_0(D_0)$ will both make a contribution to $p^\ell(\L)$ only when $\rho^+_0=\rho^-_0=1$. Otherwise only $-zp^\ell_0(D_0)$ makes a contribution to $p^\ell(\L)$ . This results in the second portion of (\ref{P3.4e1}). After this, we can use induction to prove the general case $\nu_{\rho^-}\ge 3$ and there it is fairly straight forward to see that $D_0$ always makes the sole contributions to the $E$ and $e$ powers. RLR.
\end{proof}

\medskip
We now consider the case $\delta^+>\rho^-$. 

\medskip
\begin{proposition}\label{P3.5}
Let $D$ be  a standard diagram of $\L\in P_3(\mu_1,\ldots,\mu_{\rho^+};-\nu_1,\ldots,-\nu_{\rho^-}\vert 0;0)$ such that $\delta^+>\rho^-$,   then we have
\begin{eqnarray}
E(\L)&=&2n-w(D)-1,\ 
p_0^h(\L)=
(-1)^{c^-(D)}F,\label{P3.5e1}
\end{eqnarray}

\begin{eqnarray}
\left\{
\begin{array}{lll}
e(\L)&=&-2n-w(D)+1+2\min\{\delta^+, n-1\},\\
p^\ell_0(\L)&\in&\left\{
\begin{array}{ll}
(-1)^{\rho^-+c^-(D)}F,&{\rm if}\ \delta^+<n-1,\\
(-1)^{1+\rho^-+c^-(D)}F, &{\rm if} \ \delta^+\ge n-1.
\end{array}
\right.
\end{array}
\right.\label{P3.5e2}
\end{eqnarray}
\end{proposition}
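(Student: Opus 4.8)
The plan is to follow the same two-step strategy used in the proofs of Propositions \ref{P3.2} and \ref{P3.4}: prove the highest-degree data $E(\L),p_0^h(\L)$ first and the lowest-degree data $e(\L),p_0^\ell(\L)$ afterwards, in each case peeling off the smallest negative strip by applying VP$^-$ (the skein relation (\ref{Skein2})) to one of its crossings and controlling the two resulting diagrams $D_+$ and $D_0$. The assertion $E(\L)=2n-w(D)-1$ with $p_0^h(\L)\in(-1)^{c^-(D)}F$ is \emph{identical} to the one already proved in Propositions \ref{P3.1}, \ref{P3.2} and \ref{P3.4}, and since $\delta^+$ plays no role in the top $a$-power, I expect this half to go through by the same induction on $\nu_1$ used in Proposition \ref{P3.2}: $D_+$ is a torus connected sum governed by (\ref{TorusConnect1}) and $D_0$ has one fewer negative crossing, so that the inductive hypothesis (or Proposition \ref{P3.1}) applies. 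Thus the real content is the computation of $e(\L)$ and $p_0^\ell(\L)$.

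For the lowest-degree part I would run an inner induction on $\nu_{\rho^-}\ge 2$ (which leaves $2n$ and $\delta^+$, hence the target cap $\min\{\delta^+,n-1\}$, unchanged) inside an outer induction on $\rho^-$. Applying VP$^-$ to a crossing of the $\nu_{\rho^-}$ strip, $D_+$ opens the main cycle into a connected sum of torus links $T_p(\mu_j,2)$ and $T_p(-\nu_i,2)$, whose lowest $a$-power and leading coefficient are read off from (\ref{TorusConnect2}), while $D_0$ is a basic Type 3 diagram with the same $\delta^+$ but one fewer negative crossing, governed by the inductive hypothesis (or by Propositions \ref{P3.1}, \ref{P3.2}, \ref{P3.4}). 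Reducing $\rho^-$ only strengthens the inequality $\delta^+>\rho^-$, so the induction stays inside the present regime and bottoms out at small base cases ($\rho^-=1$ with $\nu_1$ small), which I would compute directly or via Proposition \ref{P3.3} and the torus formulas (\ref{HTp(n)})--(\ref{HTp(-n)}); it is precisely these base cases that supply the factor $(-1)^{\rho^-}$ distinguishing (\ref{P3.5e2}) from the alternating sign $(-1)^{1+\delta^+}$ of Proposition \ref{P3.1}, so the base case of the outer induction is $\rho^-=1$, not $\rho^-=0$. In each reduction the decisive bookkeeping is to compare the contribution $a^2\,p_0^\ell(D_+)$ against $-az\,p_0^\ell(D_0)$ in (\ref{Skein2}), to detect when the two branches land in the same $a$-power (so their leading $z$-monomials must be summed, with possible cancellation), and to keep the surviving coefficient inside the advertised coset $(-1)^{\rho^-+c^-(D)}F$ or $(-1)^{1+\rho^-+c^-(D)}F$.

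The crux, and the step I expect to be hardest, is the emergence of the cap $\min\{\delta^+,n-1\}$ together with the sign change across the threshold $\delta^+=n-1$. Because $\delta^+>\rho^-$, once the $\rho^-$ negative strips have been consumed there remain $\delta^+-\rho^-\ge 1$ lone positive crossings, and these are exactly the crossings that raise $e$ through MP moves, by the same mechanism that produces the $\min$ in Proposition \ref{P3.1}; but the cyclic arrangement of the $2n$ Seifert circles of the main cycle prevents this raise from exceeding $2(n-1)$. I would isolate this by showing that while $\delta^+<n-1$ each newly exposed lone crossing raises $e$ by exactly $2$ without disturbing the sign (the leading coefficient stays a single monomial in $(-1)^{\rho^-+c^-(D)}F$, notably $\delta^+$-independent), whereas once $\delta^+\ge n-1$ the cycle saturates: two competing skein branches reach the same lowest $a$-power, their leading monomials combine, and the sign of the surviving coefficient flips to $(-1)^{1+\rho^-+c^-(D)}$. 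Verifying this saturation-and-flip is the delicate point, and to organize it I would lean on Proposition \ref{P3.1}, whose $e$-formula carries the \emph{same} cap, treating the negative strips purely as contributors to the shifts in $w(D)$ and $c^-(D)$ and to the overall sign; as a sanity check one can also read $e(\L)$ as $-E(\L^r)$ via the mirror relation of Remark \ref{remark2.2}(ii), where the $\delta^+$ lone positive crossings become negative lone crossings and the cap reappears as the capped count of MP moves around the cycle.
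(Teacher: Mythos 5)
Your route is genuinely different from the paper's. The paper runs no skein induction here at all: it applies N-moves to turn $D$ into an alternating Type B diagram $\tilde{D}$, imports $E(\tilde D)$, $e(\tilde D)$ and the lone-crossing reduction from \cite[Theorem 4.1]{Diao2021}, and then computes $p_0^h$, $p_0^\ell$ by a direct product calculation on the smoothed diagram $\hat{D}$ (a connected sum of torus links and a one-circle link $D_1$, split-union some unknots). Your plan instead re-derives everything by VP$^-$ induction anchored at Propositions \ref{P3.1}--\ref{P3.4}. The parts of your plan concerning $E(\L)$ and the \emph{value} of $e(\L)$ do go through: for the base reduction $\nu_{\rho^-}=2$ one gets $2+e(D_+)=-2n-w(D)+1+2\delta^+$ and $1+e(D_0)=-2n-w(D)+1+2\min\{\delta^+,n-1\}$, so the two branches collide exactly when $\delta^+\le n-1$ (note this is the opposite of your picture: the ``saturated'' regime $\delta^+\ge n$ is the one where $D_0$ alone contributes), and in the collision the $D_0$ branch dominates in $z$-degree by $2$, so no cancellation of the lowest $a$-power occurs and the cap $\min\{\delta^+,n-1\}$ emerges correctly.

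The genuine gap is in the step you defer as ``the delicate point,'' and it cannot be closed as described. First, membership in $\pm F$ is too weak an induction hypothesis once two branches land in the same $a$-power; you must carry exact leading monomials, as Proposition \ref{P3.1} does. Second, and fatally for the stated target: when you do carry exact monomials, the dominant $-azH(D_0)$ branch imports Proposition \ref{P3.1}'s sign $(-1)^{1+\delta^+}$, which is $\delta^+$-dependent, into every case with $\rho^-\ge 1$ and $\delta^+<n-1$; no base case at $\rho^-=1$ can convert this into the $\delta^+$-independent sign $(-1)^{\rho^-+c^-(D)}$ claimed in (\ref{P3.5e2}). Concretely, take $\L\in P_3(2,2,2,2,2,2,1,1,1;-2\vert 0;0)$, so $\rho^-=1$, $\delta^+=3$, $n=5$, $c^-(D)=2$, $\delta^+<n-1$. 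Resolving one crossing of the $-2$ strip gives $D_+$ equal to the connected sum of six copies of $T_p(2,2)$ and $D_0\in P_3(2,2,2,2,2,2,1,1;0\vert 0;0)$; the coefficient of $a^{e(\L)}=a^{-16}$ in $H(\L)=a^2H(D_+)-azH(D_0)$ is $z^{-6}+z^{-4}$, using $p_0^\ell(D_0)=-z^{-5}$ from Proposition \ref{P3.1}. So $p_0^\ell(\L)=z^{-4}\in F$, whereas $(-1)^{\rho^-+c^-(D)}F=-F$: your recursion, executed correctly, produces the sign $(-1)^{1+\delta^++c^-(D)}$ in this regime (agreeing with (\ref{P3.5e2}) only when $\delta^++\rho^-$ is odd) and therefore cannot terminate in the stated formula. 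In other words, the obstruction you would hit is not a technical weakness of skein induction but an incompatibility between Proposition \ref{P3.1} and the sign asserted in (\ref{P3.5e2}) for $\delta^+<n-1$; the values of $E(\L)$ and $e(\L)$, hence $\b_0(\L)$ and all braid-index consequences, are unaffected, but any proof along your lines must be prepared to land on the corrected sign rather than the displayed one.
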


\begin{proof} We first use the $N$-moves to change $D$ to a Type B alternating link diagram $\tilde{D}$ as defined in \cite[Theorem 4.1]{Diao2021}. We have $s(\tilde{D})=s(D)-\rho^-$, $w(\tilde{D})=w(D)-\rho^-$. The Seifert circle decomposition of $\tilde{D}$ contains a cycle of $2n-2\rho^-=\rho^+-\rho^-$ Seifert circles, connected by $\mu_1$, ..., $\mu_{\rho^+_0}$ positive crossings and $\delta^+-\rho^-$ lone crossings (where $\rho_0^+=\rho^+-\delta^+$), one of these Seifert circles in the cycle also has $\rho^-$ Seifert circles attached to it by $\nu_1$, ..., $\nu_{\rho^-}$ negative crossings. By \cite[Theorem 4.1]{Diao2021} and its proof, we have
$$
E(\L)=E(\tilde{D})=s(\tilde{D})-w(\tilde{D})-1=s(D)-w(D)-1,
$$
and $p_0^h(\L)=p_0^h(\tilde{D})=z^{\delta^+-\rho^-}p_0^h(\hat{D})$, where $\hat{D}$ is the diagram obtained from $\tilde{D}$ by smoothing all the lone crossings in it. $\hat{D}$ is the connected sum of $T_p(\mu_j,2)$, $1\le j\le \rho^+-\delta^+$, $D_1$, plus a disjoint union of $\delta^+-\rho^--1$ trivial knots, where $D_1$ is the alternating link whose Seifert circle decomposition has the structure of one Seifert circle $C$ with $\rho^-$ Seifert circles attached to it (either from inside or outside of $C$) by $\nu_1$, ..., $\nu_{\rho^-}$ negative crossings. Notice that Remark \ref{elementary_remark} applies to $D_1$. Direct calculation then leads to 
$$
p^h_0(\hat{D})=\big(z^{-\rho^++\sum_{i=1}^{\rho^+-\delta^+}\mu_i}\big)\big((-1)^{c^-(D_1)}z^{c^-(D_1)-3\rho^-}\big)\big(z^{-\delta^++\rho^-+1}\big)
$$
and 
$$
p_0^h(\tilde{D})=z^{\delta^+-\rho^-}p_0^h(\hat{D})=(-1)^{c^-(D)}z^{1+c(D)-2n-2\rho^-}\in (-1)^{c^-(D)}F,
$$
which is (\ref{P3.5e1}). On the other hand, \cite[Theorem 4.1]{Diao2021} asserts that 
 \begin{eqnarray*}
e(\L)=e(\tilde{D})&=&-s(\tilde{D})-w(\tilde{D})+1+2\min\{\delta^+-\rho^-,n-1-\rho^-\}\\
&=&-s(D)-w(D)+1+2\rho^-+2\min\{\delta^+-\rho^-,n-1-\rho^-\}\\
&=&-s(D)-w(D)+1+2\min\{\delta^+,n-1\}
\end{eqnarray*}
with
\begin{eqnarray*}
p^\ell_0(\L)=p^\ell_0(\tilde{D})&=&\left\{
\begin{array}{ll}
-zp^\ell_0(\hat{D}),&{\rm if}\ \delta^+<n-1,\\
z^{-1+3(n-\rho^-)}p^\ell_0(\hat{D}), &{\rm if} \ \delta^+= n-1,\\
z^{-2-(\delta^+-\rho^-)+4(n-\rho^-)}p^\ell_0(\hat{D}), &{\rm if} \ \delta^+> n-1.\\
\end{array}
\right.
\end{eqnarray*}
Using Remarks \ref{elementary_remark} and \ref{toruslink_formula} we can show by direct calculation that 
\begin{eqnarray*}
p^\ell_0(\hat{D})&=&\big((-1)^{\rho^+-\delta^+}z^{\sum_{i=1}^{\rho^+-\delta^+}\mu_i-3(\rho^+-\delta^+)}\big)\big((-1)^{c^-(D_1)+\rho^-}z^{c^-(D_1)-\rho^-}\big)\big((-z)^{-\delta^++\rho^-+1}\big)\\
&=&(-1)^{1+\rho^-+c^-(D)}z^{1+\delta^++c(D)-3\rho^+} \in (-1)^{1+\rho^-+c^-(D)}F.
\end{eqnarray*}
Substituting this into the above formula for $p^\ell_0(\tilde{D})$ and (\ref{P3.5e2}) follows. We leave the details of the calculations to the reader. 
\end{proof}

Combining Propositions \ref{P3.1} to \ref{P3.5}, we have (for any $\L\in P_3(\mu_1,\ldots,\mu_{\rho^+};-\nu_1,\ldots,-\nu_{\rho^-}\vert 0;0)$):
\begin{eqnarray}
E(\L)&=&
\left\{
\begin{array}{lll}
0,& {\rm if}\ \rho^+=\rho^-=1, |\mu_1-\nu_1|=1,\\
2n-w(D)-1,& {\rm otherwise},\\
\end{array}
\right.\label{Basic_e1}\\
e(\L)&=&
\left\{
\begin{array}{lll}
0,& {\rm if}\ \rho^+=\rho^-=1, |\mu_1-\nu_1|=1,\\
-w(D)-1,& {\rm if}\ \rho^+=\rho^-=\delta^+>1,\\
-2n-w(D)+1+2\min\{\delta^+,n-1\},&  {\rm otherwise},
\end{array}
\right.\label{Basic_e2}
\end{eqnarray}
and
\begin{eqnarray}
\b_0(\L)&=&\frac{E(\L)-e(\L)}{2}+1=
\left\{
\begin{array}{ll}
0,& {\rm if}\ \rho^+=\rho^-=1, |\mu_1-\nu_1|=1,\\
n+1,& {\rm if}\ \rho^+=\rho^-=\delta^+>1,\\
2n-\min\{\delta^+,n-1\},&  {\rm otherwise}.
\end{array}
\right.\label{Basic_e3}
\end{eqnarray}

\section{The cases when $\rho^+=\rho^-=1$, $\kappa^++\kappa^->0$}\label{Type3S2}

\medskip
\begin{proposition}\label{P4.1}
Let $D$ be  a standard diagram of $\L\in P_3(\mu_1;-\nu_1\vert 2\alpha_1,\ldots, 2\alpha_{\kappa^+};0)$ such that $\mu_1>1$ and $\nu_1=\mu_1+1$, then we have
\begin{eqnarray}
&e(\L)=-\kappa^+-2\sum \alpha_j, &E(\L)=
\left\{
\begin{array}{ll}
2-\kappa^+, &{\rm if}\ \alpha_{\kappa^+}=1,\\
-\kappa^+, &{\rm if}\ \alpha_{\kappa^+}>1,
\end{array} 
\right.\label{exception1}\\
&p^\ell_0(\L)\in (-1)^{1+\kappa^++c^{-}(D)}F, &\left\{
\begin{array}{ll}p^h(\L)=z,  &{\rm if}\ \kappa^+=1, \mu_1=2 \ {\rm and}\ \alpha_1> 2,\\ 
p^h_0(\L) \in (-1)^{c^{-}(D)}F, &{\rm otherwise}.
\end{array}
\right.\label{exception2}
\end{eqnarray}
Similarly, if $D$ is a standard diagram of $\L\in P_3(\mu_1;-\nu_1\vert 0;-2\beta_1,\ldots, -2\beta_{\kappa^-})$ such that $\mu_1=\nu_1+1$, then we have
\begin{eqnarray}
&E(\L)=\kappa^-+2\sum \beta_i, &e(\L)=
\left\{
\begin{array}{ll}
\kappa^--2, &{\rm if}\ \beta_{\kappa^-}=1,\\
\kappa^-, &{\rm if}\ \beta_{\kappa^-}>1,
\end{array}
\right.\label{exception3}\\
&p^h_0(\L)\in (-1)^{c^{-}(D)}F, &\left\{
\begin{array}{ll}
p^\ell_0(\L)=-z, & {\rm if}\ \kappa^-=1, \nu_1=2 \ {\rm and}\ \beta_1> 2,\\ 
p^\ell_0(\L)\in (-1)^{1+\kappa^-+c^{-}(D)}F, &{\rm otherwise}.
\end{array}
\right.\label{exception4}
\end{eqnarray}
\end{proposition}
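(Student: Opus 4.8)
The plan is to prove the first assertion (\ref{exception1})--(\ref{exception2}) directly and then obtain the second assertion (\ref{exception3})--(\ref{exception4}) from it by mirror symmetry, so that essentially half the work is free. Indeed, reflecting the diagram reverses every crossing sign while preserving which strips are parallel and which are antiparallel, so the mirror image of $\L\in P_3(\mu_1;-\nu_1\vert 2\alpha_1,\ldots,2\alpha_{\kappa^+};0)$ is $\L^r\in P_3(\nu_1;-\mu_1\vert 0;-2\alpha_1,\ldots,-2\alpha_{\kappa^+})$, and the hypothesis $\nu_1=\mu_1+1$ on $\L$ becomes exactly the hypothesis ``the positive parallel strip is one larger than the negative one'' on $\L^r$, i.e.\ the hypothesis of the second assertion with $\kappa^-=\kappa^+$ and $\beta_i=\alpha_i$. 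Using $E(\L^r)=-e(\L)$, $e(\L^r)=-E(\L)$, $p^h(\L^r)=(-1)^{e(\L)}p^\ell(\L)$ and $p^\ell(\L^r)=(-1)^{E(\L)}p^h(\L)$ from Remark \ref{remark2.2}(ii), the four quantities in (\ref{exception3})--(\ref{exception4}) translate term by term into those of (\ref{exception1})--(\ref{exception2}); in particular the exceptional value $p^\ell_0(\L)=-z$ in (\ref{exception4}) is the image of $p^h(\L)=z$ in (\ref{exception2}) under $p^\ell(\L^r)=(-1)^{E(\L)}p^h(\L)$ with $E(\L)=-\kappa^+=-1$. Thus it suffices to establish the first assertion.

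For the first assertion I would argue by induction on $\kappa^+$, taking the base case $\kappa^+=0$ from Proposition \ref{P3.3}: when $\nu_1=\mu_1+1$ the parallel part alone is the unknot, so $E=e=0$ and $p^h_0=p^\ell_0=1$, which agrees with the claimed formulas once one sets $\sum\alpha_j=0$. For the inductive step, since the HOMFLY-PT polynomial is insensitive to the ordering of the strips (Remark \ref{remark2.2}(iii)), I would single out the smallest antiparallel strip $2\alpha_{\kappa^+}$ and run a secondary induction on $\alpha_{\kappa^+}$, applying the procedure VP$^+$, i.e.\ the skein relation (\ref{Skein1}), to one of its positive crossings.

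Writing $H(D_+)=a^{-2}H(D_-)+a^{-1}zH(D_0)$, the two resolutions are controlled as follows. In $D_-$ the switched crossing cancels a neighbour by a Reidemeister II move, so $2\alpha_{\kappa^+}$ drops to $2\alpha_{\kappa^+}-2$; when $\alpha_{\kappa^+}\ge 2$ this is the same family with a smaller $\sum\alpha_j$ (secondary hypothesis), and when $\alpha_{\kappa^+}=1$ the strip disappears entirely and $D_-$ has only $\kappa^+-1$ antiparallel strips (primary hypothesis). The smoothing $D_0$ opens the antiparallel clasp tower and reduces to a diagram to which the earlier results apply, namely a connected sum of torus links handled by Remark \ref{toruslink_formula} or one of the basic Type 3 diagrams of Propositions \ref{P3.1}--\ref{P3.5}. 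Since multiplication by $a^{-2}$ (respectively $a^{-1}$) shifts the $a$-powers of $H(D_-)$ (respectively $H(D_0)$), I would compare the shifted top and bottom $a$-degrees of the two terms to read off $E(\L)$ and $e(\L)$, and add the corresponding leading $z$-monomials $p^h_0$ and $p^\ell_0$ whenever the two contributions meet in the same $a$-degree. The clasp versus non-clasp dichotomy $E(\L)=2-\kappa^+$ against $E(\L)=-\kappa^+$ should emerge here from whether the lone antiparallel crossing produced by $\alpha_{\kappa^+}=1$ raises the top $a$-degree, the lone-crossing/MP-move phenomenon recorded in Remarks \ref{MP_move} and \ref{toruslink_formula}.

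The hard part will not be the generic step, where one of the two shifted terms strictly dominates and the conclusions $p^h_0(\L)\in(-1)^{c^-(D)}F$ and $p^\ell_0(\L)\in(-1)^{1+\kappa^++c^-(D)}F$ follow from sign bookkeeping. It will be the degenerate low-parameter cases in which the shifted contributions of $D_-$ and $D_0$ land in the same extremal $a$-degree and their leading $z$-monomials cancel. This is precisely what produces the exact answer $p^h(\L)=z$ when $\kappa^+=1$, $\mu_1=2$ and $\alpha_1>2$: the expected leading term is annihilated and a single lower-degree monomial survives, which the ``$\in F$'' domination argument cannot detect. Handling these cases will require replacing the domination argument by an exact computation of the relevant coefficients, using the explicit torus-link polynomials (\ref{HTo(2k)})--(\ref{HTp(-n)}) together with the Fibonacci-type identities for $f_k$; identifying exactly the boundary conditions $\mu_1=2$ and $\alpha_1>2$ under which this cancellation takes place is the delicate point of the proof.
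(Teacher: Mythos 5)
Your outline matches the paper's proof in broad strokes: the paper also deduces (\ref{exception3})--(\ref{exception4}) from (\ref{exception1})--(\ref{exception2}) by the mirror-image identities of Remark \ref{remark2.2}(ii), and proves the first assertion by a double induction on $\kappa^+$ and $\alpha_{\kappa^+}$, applying VP$^+$ to a crossing of the smallest antiparallel strip and doing exact Fibonacci-polynomial computations in the low-parameter cases. However, there is a genuine gap in your plan: you have the two skein resolutions exchanged. Smoothing a crossing of an antiparallel strip caps that strip off (the remaining $2\alpha_{\kappa^+}-1$ crossings become removable kinks), so it is $D_0$, not $D_-$, that is again a diagram in the family $P_3(\mu_1;-\nu_1\vert 2\alpha_1,\ldots,2\alpha_{\kappa^+-1};0)$ and receives the primary induction hypothesis. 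Conversely, when $\alpha_{\kappa^+}=1$ the crossing switch $D_-$ undoes the clasp by a Reidemeister II move but leaves behind a crossingless $0$-tangle of two parallel strands; this cuts the main cycle, and $D_-$ is the \emph{connected sum} $T_p(\mu_1,2)\# T_p(-(\mu_1+1),2)\# T_o(2\alpha_1,2)\#\cdots\# T_o(2\alpha_{\kappa^+-1},2)$, which must be handled by Remark \ref{toruslink_formula}, not by the proposition for $\kappa^+-1$ (it is not even a pretzel link). As written, your allocation also leaves $D_0$ with no applicable earlier result once $\kappa^+\ge 2$: it is neither a connected sum of torus links nor a basic Type 3 diagram, so the induction cannot be run at all.

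This is not a harmless relabelling, because the two terms of (\ref{Skein1}) carry different $a$-shifts ($a^{-2}$ versus $a^{-1}z$). Already at $\kappa^+=1$, $\alpha_1=1$ the correct identification gives $D_0=$ unknot and $E(D_-)=3$ for the connected sum, hence $E(\L)=-2+E(D_-)=1=2-\kappa^+$ and $e(\L)=-2+e(D_-)=-3$, as claimed; with the roles swapped one would instead get $E(\L)=-1+3=2$ and $e(\L)=-2$, contradicting the very formulas you are trying to prove. A smaller inaccuracy: the exceptional value $p^h(\L)=z$ (for $\kappa^+=1$, $\alpha_1>2$) does not arise from a cancellation of leading terms, but from pure domination --- there $-2+E(D_-)=-3<-1=-1+E(D_0)$, so the unknot resolution $D_0$ alone contributes, giving $p^h(\L)=z\cdot p^h(D_0)=z$ exactly; the genuine cancellation you anticipate occurs one step earlier, at $\alpha_1=2$, $\mu_1=2$, where $p^h(D_-)+zp^h(D_0)=-(z+z^{-1})+z=-z^{-1}$. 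Once you swap the roles of $D_-$ and $D_0$ back, your double induction, degree comparisons, and exact computations proceed essentially as in the paper.
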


\begin{proof}
We will only prove (\ref{exception1}) and (\ref{exception2}) since  (\ref{exception3}) and (\ref{exception4}) can be obtained from (\ref{exception1}) and (\ref{exception2}) using the mirror image of $\L$. For $\alpha_1=1$, apply VP$^+$ to a crossing in the $\alpha_1$ strip. $D_0$ is the unknot (hence $-1+E(D_0)=-1+e(D_0)=-1$) and $D_-=T_p(\mu_1,2)\# T_p(-(\mu_1+1),2)$. It follows that 
\begin{eqnarray*}
E(\L)&=&-2+E(D_-)=1,\\
p^h(\L)&=&p^h(D_-)=(-1)^{\mu_1+1}f_{\mu_1+2}f_{\mu_1+1}=(-1)^{c^-(D)}f_{\mu_1+2}f_{\mu_1+1},\\
e(\L)&=&-2+e(D_-)=-3, \\
 p^\ell_0(\L)&=&p^\ell_0(D_-)=(-1)^{c^-(D)}z^{2\mu_1-3}.
\end{eqnarray*}
Notice that $p_0^h(\L)=(-1)^{c^-(D)}z^{2\mu_1-3}$.

\medskip
If $\alpha_1=2$, then $D_0$ is again the unknot and $D_-$ corresponds to the case $\alpha_1=1$. $-2+E(D_-)=-2+1=-1$ with $p^h_0(D_-)=(-1)^{c^-(D)}z^{c(D)-8}$. If $\mu_1> 2$, then $c(D)-8=2\mu_1-3>1$ hence $E(\L)=-1$ with $p^h_0(\L)=(-1)^{c^-(D)}z^{c(D)-8}$. If $\mu_1=2$, then $c(D)-8=1$ and $\nu_1=c^-(D)=3$ hence $p^h(D_-)=-f_{4}f_{3}=-(z+z^{-1})$. Thus $p^h(\L)=p^h(D_-)+zp^h(D_0)=-z^{-1}=(-1)^{c^-(D)}z^{c(D)-10}$. Either way, we have $E(\L)=-1$ with $p^h_0(\L)\in (-1)^{c^-(D)}F$. On the other hand, $-2+e(D_-)=-2-3=-5$ so $e(\L)=-5$ with $p^\ell_0(\L)=p^\ell_0(D_-)\in (-1)^{c^-(D)}F$. From here it is easy to see that in general, for $\alpha_1\ge 3$ we have 
\begin{eqnarray*}
E(\L)&=&-1+E(D_0)=-1=\kappa^+-2=-\kappa^+,\ p^h(\L)=zp^h(D_0)=z\in F,\\
e(\L)&=& -1-2\alpha_1=-\kappa^+-2\alpha_1, \ p^\ell_0(\L)\in (-1)^{c^-(D)}F=(-1)^{1+\kappa^++c^-(D)}F.
\end{eqnarray*}
Combining this with the cases of $\alpha_1=1$ and $\alpha_1=2$ proves the case $\kappa^+=1$. 

\medskip
Now assume that the statement holds for $\kappa^+=q-1\ge 1$ and consider the case $\kappa^+=q$. Start with $\alpha_{\kappa^+}=1$. $D_0$ corresponds to ${\kappa^+}-1$ where the induction hypothesis applies and $D_-$ simplifies to the connected sum of $T_o(2\alpha_j,2)$ ($1\le j\le \kappa^+-1$), $T_p(\mu_1,2)$ and $T_p(-(\mu_1+1),2)$. We have
\begin{eqnarray*}
-1+e(D_0)&=&-1-(\kappa^+-1)-2\sum_{j\le \kappa^+-1} \alpha_j=-\kappa^+-2\sum_{j\le \kappa^+-1} \alpha_j, \\
-2+e(D_-)&=&
-2-\kappa^+-2\sum_{j\le \kappa^+-1} \alpha_j=-\kappa^+-2\sum \alpha_j.
\end{eqnarray*}
It follows that $e(\L)=-2+e(D_-)=-\kappa^+-2\sum \alpha_j$ with $p^\ell(\L)=p^\ell(D_-)\in (-1)^{1+\kappa^++c^{-}(D)}F$. On the other hand,
\begin{eqnarray*}
-1+E(D_0)&=&
\left\{
\begin{array}{ll}
-1+2-(\kappa^+-1)=2-\kappa^+, &{\rm if}\ \alpha_{\kappa^+-1}=1,\\
-1-(\kappa^+-1)=-\kappa^+, &{\rm if}\ \alpha_{\kappa^+-1}>1,
\end{array}
\right.\\
-2+E(D_-)&=&2-\kappa^+.
\end{eqnarray*}
Thus $E(\L)=-2+E(D_-)=2-\kappa^+$ with $p^h(\L)=p^h(D_-)\in (-1)^{c^{-}(D)}F$ if $\alpha_{\kappa^+-1}>1$. If $\alpha_{\kappa^+-1}=1$, we have
$zp^h_0(D_0)\in (-1)^{c^{-}(D)}F$ by the induction hypothesis and 
$p^h_0(D-)\in (-1)^{c^{-}(D)}F$ by direction computation. Thus $E(\L)=2-\kappa^+$ with $p^h(\L)\in (-1)^{c^{-}(D)}F$.
So the result follows for $\alpha_{\kappa^+}=1$. If $\alpha_{\kappa^+}>1$, then $\alpha_{\kappa^+-1}>1$ and the induction follows trivially. 
\end{proof}

\medskip
\begin{proposition}\label{P4.2}
Let $D$ be  a standard diagram of $\L\in P_3(\mu_1;-\nu_1\vert 2\alpha_1,\ldots, 2\alpha_{\kappa^+};-2\beta_1,\ldots,-2\beta_{\kappa^-})$ such that one of the following conditions holds: {\em (a)} $\mu_1>1$, $\nu_1=\mu_1+1$ and $\kappa^->0$; {\em (b)} $\nu_1>1$, $\mu_1=\nu_1+1$ 
and $\kappa^+>0$; {\em (c)} $\mu_1>1$, $|\mu_1-\nu_1|\not=1$ (and $\nu_1>1$), then
\begin{eqnarray}
E(\L)&=&s(D)-w(D)-1-2r^-(D)=1+\nu_1-\mu_1+\kappa^--\kappa^++2\sum \beta_i,\label{P4.2e1}\\
e(\L)&=&-s(D)-w(D)+1+2r^+(D)=-1+\nu_1-\mu_1+\kappa^--\kappa^+-2\sum\alpha_j,\label{P4.2e2}
\end{eqnarray}
where $r^-(D)=-\kappa^-+\sum \beta_i$ and $r^+(D)=-\kappa^++\sum \alpha_j$. 
\end{proposition}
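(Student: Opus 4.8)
The plan is to establish the formula for $E(\L)$ in all three cases (a), (b), (c) and then obtain $e(\L)$ for free by passing to the mirror image. Indeed, by Remark \ref{remark2.2}(ii) we have $e(\L)=-E(\L^r)$, and $\L^r\in P_3(\nu_1;-\mu_1\vert 2\beta_1,\ldots,2\beta_{\kappa^-};-2\alpha_1,\ldots,-2\alpha_{\kappa^+})$ again satisfies one of (a), (b), (c): cases (a) and (b) are interchanged by the mirror, since the hypothesis $\nu_1=\mu_1+1$, $\kappa^->0$ becomes $\mu_1^r=\nu_1^r+1$, $\kappa^{+,r}=\kappa^->0$, while case (c) is mirror-invariant because $|\mu_1-\nu_1|$ is. Hence the family (a)$\cup$(b)$\cup$(c) is closed under mirror image, and the $E$-formula for the whole family yields the $e$-formula for the whole family. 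Throughout the argument I would also keep track of the top $a$-coefficient, maintaining a sign-definite invariant of the form $p^h_0(\L)\in(-1)^{c^-(D)}F$ (as in Proposition \ref{P4.1}), so that no cancellation of the leading $a$-term can occur during an inductive step.

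The key observation is that the target value $s(D)-w(D)-1-2r^-(D)$ is precisely the highest $a$-degree $\L$ would have if it were the connected sum $T_p(\mu_1,2)\# T_p(-\nu_1,2)\#\bigl(\#_j T_o(2\alpha_j,2)\bigr)\#\bigl(\#_i T_o(-2\beta_i,2)\bigr)$, by (\ref{TorusConnect3}). So the content is that, under (a)--(c), the genuine pretzel link shares the leading $a$-behaviour of this connected sum, the difference appearing only in lower-order $a$-terms. I would prove this by a skein induction that peels off the negative even strips. Applying VP$^-$ (relation (\ref{Skein2})) to a crossing in a $\beta_i$-strip with $\beta_i\ge 2$, a Reidemeister II cancellation turns the strip in $D_+$ into a $2(\beta_i-1)$-strip, so $D_+$ is the same link with $\beta_i$ lowered by one; by induction $E(D_+)=E(\L)-2$, whence $E(\L)=2+E(D_+)$ provided the term $-azH(D_0)$ contributes a strictly smaller $a$-degree. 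Iterating removes all $\beta_i$-strips and lands in the core link $P_3(\mu_1;-\nu_1\vert 2\alpha_1,\ldots,2\alpha_{\kappa^+};0)$: when $\nu_1=\mu_1+1$ this is exactly Proposition \ref{P4.1}, and otherwise one reduces the $\nu_1$-strip by further VP$^-$ steps down to the positive pretzel $P_3(\mu_1;0\vert 2\alpha_1,\ldots,2\alpha_{\kappa^+};0)$, which simplifies to a connected sum of torus links governed by (\ref{TorusConnect3}) (exactly as in the proof of Proposition \ref{P3.2}, where $D_-$ becomes such a connected sum).

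I expect the genuine difficulty to be concentrated at the transition $\kappa^-\colon 0\to 1$, the introduction of the first negative even strip. Comparing Proposition \ref{P4.1} with the present formula shows the value of $E$ changes character there: in (\ref{exception1}) it depends on whether $\alpha_{\kappa^+}=1$, whereas once $\kappa^->0$ the expression $1+\nu_1-\mu_1+\kappa^--\kappa^++2\sum\beta_i$ is insensitive to $\alpha_{\kappa^+}$. The mechanism is that at the VP$^-$ step applied to a single bigon $\beta_1=1$ (a $T_o(-2,2)$ strip) it is the smoothed diagram $D_0$, rather than $D_+$, that carries the top $a$-degree: smoothing one crossing of the bigon fuses its two nested Seifert circles and deletes the strip along the smoothing opposite to the RII-cancelling one used for $D_+$. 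The crux is therefore to identify $D_0$ precisely, compute its leading term from Remarks \ref{toruslink_formula} and \ref{elementary_remark}, and verify that $1+E(D_0)$ strictly exceeds $2+E(D_+)$, with the sign of the leading coefficient controlled by $(-1)^{c^-(D)}F$ so that no cancellation occurs. The remaining work is the routine but lengthy verification, case by case in (a), (b), (c), that at every other VP$^-$ step $D_+$ strictly dominates and that the few genuine ties in $a$-degree are resolved by the sign-definiteness of $p^h_0$. Once the $E$-formula is in hand, the equality of the two displayed expressions in (\ref{P4.2e1})--(\ref{P4.2e2}) is a bookkeeping check using $s(D)=2+2\sum\alpha_j+2\sum\beta_i-\kappa^+-\kappa^-$ and $w(D)=\mu_1-\nu_1+2\sum\alpha_j-2\sum\beta_i$, and $e(\L)$ follows by the mirror symmetry described above.
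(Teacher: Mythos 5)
Your global architecture is sound and, modulo repackaging, it is the paper's: the family (a)$\cup$(b)$\cup$(c) is indeed closed under mirroring with (a) and (b) exchanged, so deducing (\ref{P4.2e2}) from (\ref{P4.2e1}) via $e(\L)=-E(\L^r)$ is legitimate (the paper does the equivalent thing, proving (a) and (c) in full and mirroring to get (b)); your plan of peeling off antiparallel strips by VP$^-$ while tracking $p^h_0\in(-1)^{c^-(D)}F$ is exactly the paper's inductive scheme, and your bookkeeping for $s(D)$, $w(D)$ is correct. The genuine gap is at the step you yourself call the crux, where you have the mechanism backwards. Take case (a) with $\kappa^+>0$, $\kappa^-=1$, $\beta_1=1$, and apply VP$^-$ to a crossing of the bigon. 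Then $D_0$ is the strip-deleted pretzel $P_3(\mu_1;-(\mu_1+1)\vert 2\alpha_1,\ldots,2\alpha_{\kappa^+};0)$, so Proposition \ref{P4.1} gives $1+E(D_0)\le 3-\kappa^+$, whereas $D_+$ is the connected sum $T_p(\mu_1,2)\# T_p(-(\mu_1+1),2)\#\bigl(\#_j T_o(2\alpha_j,2)\bigr)$, and (\ref{TorusConnect3}) gives $2+E(D_+)=5-\kappa^+$, which is precisely the target $1+\nu_1-\mu_1+\kappa^--\kappa^++2\sum\beta_i$. So it is $D_+$, not $D_0$, that carries the top $a$-degree, and strictly so; your claimed inequality that ``$1+E(D_0)$ strictly exceeds $2+E(D_+)$'' is false, and if it held it would force $E(\L)\le 3-\kappa^+$, contradicting the very formula you are proving. (It also contradicts your own key observation: it is exactly the $D_+$ branch that transmits the connected-sum behaviour, and this is why the answer becomes insensitive to $\alpha_{\kappa^+}$ once $\kappa^->0$.) The genuine tie at this step occurs in the lowest degree, $2+e(D_+)=1+e(D_0)$, broken by sign-definiteness; under your mirror packaging this content does not disappear --- it becomes the $E$-analysis of the mirror family (b) --- so it must still be carried out and cannot be absorbed into ``routine dominance of $D_+$''.

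A second gap is your route for the cores with $\kappa^-=0$, where you reduce the $\nu_1$-strip by VP$^-$ ``down to the positive pretzel''. In case (c) this path runs through the excluded parameter $\nu_1'=\mu_1+1$: for $\L\in P_3(\mu_1;-(\mu_1+3)\vert 2\alpha_1,\ldots,2\alpha_{\kappa^+};0)$ with $\alpha_{\kappa^+}>1$ the target value is $4-\kappa^+$, but the first reduction gives $D_+\in P_3(\mu_1;-(\mu_1+1)\vert 2\alpha_1,\ldots,2\alpha_{\kappa^+};0)$, and Proposition \ref{P4.1} yields $2+E(D_+)=2-\kappa^+$; so at this step the top degree must come from $D_0$, directly contradicting your blanket claim that away from the bigon step $D_+$ always strictly dominates. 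Moreover this $D_0$, obtained by smoothing a crossing of a \emph{parallel} strip, is not a smaller $P_3$-pretzel (the smoothing flips the parity of that strip and merges the two main Seifert circles), so bounding its $E$-value needs input your outline never provides, e.g.\ the Part I results of \cite{Diao2024}. The paper sidesteps both problems by never reducing the $\mu_1$- or $\nu_1$-strips in case (c): it fixes them and inducts on the antiparallel strips (VP$^+$ on $\alpha$-strips, VP$^-$ on $\beta$-strips), so that the crossing-switched diagram is always a connected sum of torus links covered by (\ref{TorusConnect3})--(\ref{TorusConnect4}) and the smoothed diagram is always a smaller pretzel of the same family.
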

\begin{proof}
We shall only prove (\ref{P4.2e1}) and (\ref{P4.2e2}) under conditions (a) and (c), since (b) can be  obtained from (a) using the mirror image of $\L$. Let us consider (a) first.

\medskip
Case 1: $\kappa^+=0$. Here we shall prove (\ref{P4.2e1}) and (\ref{P4.2e2}) together with the claim that
$p^\ell_0(\L)\in (-1)^{1+\kappa^-+c^-(D)}F$ if $\beta_{\kappa^-}=1$ and $p^\ell(\L)= (-1)^{\kappa^-}z^{\kappa^-}$ if $\beta_{\kappa^-}>1$. Use induction on $\rho^-$, starting with $\rho^-=1$ and $\beta_1=1$. Apply VP$^-$ to a negative crossing in the $\beta_1$ strip. $D_+$ is the connected sum of $T_p(\mu_1,2)$ and $T_p(-(\mu_1+1),2)$ and $D_0$ is the unknot (hence $1+E(D_0)=1+e(D_0)=1$). By (\ref{TorusConnect1}) and (\ref{TorusConnect2}), we have $2+E(D_+)=5=2+\kappa^-+2\beta_1$, so (\ref{P4.2e1}) holds.
On the other hand, $2+e(D_+)=1=1+e(D_0)$ with $-zp^\ell(D_0)=-z$ and $p^\ell_0(D_+)=(-1)^{c^-(D)}z^{2\mu_1-3}$. Since $2\mu_1-3\ge 1$ with equality holding only when $\mu_1=2$, and $(-1)^{c^-(D)}=(-1)^5=-1$ when $\mu_1=2$, we see that $e(\L)=1=\kappa^-$ with $p^\ell_0(\L)\in (-1)^{c^-(D)}F=(-1)^{1+\kappa^-+c^-(D)}F$. This proves the case $\beta_1=1$. For $\beta_1=2$, again apply VP$^-$ to a negative crossing in the $\beta_1$ strip. It is easy to see that $E(\L)=2+E(D_+)=7=2+\kappa^-+2\beta_1$, and  $e(\L)=1+e(D_0)=1=\kappa^-<2+e(D_+)=3$, hence $p^\ell(\L)=-zp^\ell(D_0)=-z$. RLR for $\beta_1\ge 2$ in general. Assume now that the claim holds for $1\le \kappa^-\le q$ for some $q\ge 1$, consider the case $\kappa^-=q+1$, starting from $\beta_{q+1}=1$. The induction hypothesis applies to $D_0$ and $D_+$ is the connected sum of $T_p(\mu_1,2)$, $T_p(-(\mu_1+1),2)$, and $T_o(-2\beta_i,2)$, $1\le i\le q$. By the induction hypothesis, (\ref{TorusConnect3}) and (\ref{TorusConnect4}), we have 
\begin{eqnarray*}
2+E(D_+)&=&5+q+2\sum_{1\le i\le q}\beta_i=2+\kappa^-+2\sum_{1\le i\le \kappa^-} \beta_i\\
>1+E(D_0)&=&3+q+2\sum_{1\le i\le q}\beta_i.
\end{eqnarray*}
Thus (\ref{P4.2e1}) holds. On the other hand, we have
$2+e(D_+)=1+q=\kappa^-=1+e(D_0)$ and $p^\ell_0(D_+)=(-1)^{q+c^-(D)}z^{q+2\mu_1-3}$. If $\beta_q=1$, then $-zp_0^\ell(D_0)\in (-1)^{q+c^-(D)}F$ by the induction hypothesis, and we have $e(\L)=\kappa^-$ with $p_0^\ell(\L)\in (-1)^{q+c^-(D)}F=(-1)^{1+\kappa^-+c^-(D)}F$. If $\beta_q>1$, then $-zp^\ell(D_0)=(-1)^{q+1}z^{q+1}$ by the induction hypothesis. Since $q+2\mu_1-3\ge q+1$ with equality holding only when $\mu_1=2$, and $c^-(D)$ is odd when $\mu_1=2$, we see that $e(\L)=\kappa^-$ with $p_0^\ell(\L)\in (-1)^{1+\kappa^-+c^-(D)}F$. 
For $\beta_{q+1}=2$, again apply VP$^-$ to a negative crossing in the $\beta_{q+1}$ strip. Keep in mind that we have $\beta_q\ge \beta_{q+1}\ge 2$. It is easy to see that $2+E(D_+)=2+\kappa^-+2\sum_{1\le i\le \kappa^-} \beta_i>1+E(D_0)$ so (\ref{P4.2e1}) holds. But this time we have $1+e(D_0)=q+1<2+e(D_+)=q+3$, hence we have $e(\L)=1+e(D_0)=q+1=\kappa^-$ with $p^\ell(\L)=-zp^\ell(D_0)=(-1)^{\kappa^-}z^{\kappa^-}$ by the induction hypothesis. RLR for $\beta_{q+1}\ge 2$ in general. This completes our induction.

\medskip
Case 2: $\kappa^+>0$. Here we shall prove (\ref{P4.2e1}) and (\ref{P4.2e2}) together with the claim that
$p^\ell_0(\L)\in (-1)^{1+\kappa^-+\kappa^++c^-(D)}F$. Use induction on $\rho^-$, starting with $\rho^-=1$ and $\beta_1=1$. Apply VP$^-$ to a negative crossing in the $\beta_1$ strip. Proposition \ref{P4.1} applies to $D_0$ and $D_+$ is the connected sum of $T_p(\mu_1,2)$, $T_p(-(\mu_1+1),2)$, and $T_o(2\alpha_j,2)$ ($1\le j\le \kappa^+$). By (\ref{TorusConnect3}) and (\ref{TorusConnect4}), we have (using $r^+(D_+)=-\kappa^++\sum \alpha_j$)
\begin{eqnarray*}
2+E(D_+)&=&5-\kappa^+>3-\kappa^+\ge 1+E(D_0),\\
2+e(D_+)&=&1-\kappa^+-2\sum \alpha_j,\ p^\ell_0(D_+)\in (-1)^{\kappa^++c^-(D_+)}F=(-1)^{1+\kappa^-+\kappa^++c^-(D)}F,\\
1+e(D_0)&=&1-\kappa^+-2\sum \alpha_j,\ -zp^\ell_0(D_0)\in (-1)^{1+\kappa^-+\kappa^++c^-(D)}F.
\end{eqnarray*}
So our claim holds. From here it is straight forward to use induction to show that (\ref{P4.2e1}) and (\ref{P4.2e2}) hold for any $\beta_1\ge 1$ with the claim that $p_0^\ell(\L)\in (-1)^{1+\kappa^-+\kappa^++c^-(D)}F$. RLR for $\kappa^-\ge 1$ in general.
  
\medskip
We now prove (\ref{P4.2e1}) and (\ref{P4.2e2}) under condition (c).
Under this condition, a link in $P_3(\mu_1;-\nu_1\vert 0;0)$ is equivalent to $T_p(\mu_1-\nu_1,2)$ when $\mu_1\not=\nu_1$, or the trivial link with two components if $\mu_1=\nu_1$. The proof is done by dividing the case into several smaller cases. We shall leave most of the details in the proof to the reader as most of these are tedious calculations.

\medskip\noindent
Case 1. $\kappa^+\ge 1$, $\kappa^-=0$. Here we have $s(D)=2+2\sum \alpha_j-\kappa^+$, $w(D)=2\sum \alpha_j+\mu_1-\nu_1$ and $r^-(D)=0$. Here we prove (\ref{P4.2e1}) and (\ref{P4.2e2}) together with the claim that 
$p^h_0(\L)\in  (-1)^{c^{-}(D)}F$ and  $p^\ell_0(\L)\in (-1)^{1+\kappa^++c^{-}(D)}F$, by using double induction on $\kappa^+$ and $\alpha_{\kappa^+}$. Apply VP$^+$ to a crossing in the $\alpha_1$ strip. $D_-=T_p(\mu_1,2)\# T_p(-\nu_1,2)$, $D_0=T_p(\mu_1-\nu_1,2)$. It follows that 
\begin{eqnarray*}
-2+E(D_-)&=&-1+E(D_0)=\nu_1-\mu_1=s(D)-w(D)-1,\\
-2+e(D_-)&=&\nu_1-\mu_1-4=-s(D)-w(D)+1<-1+e(D_0)=\nu_1-\mu_1-2,\\
p^h_0(D_-)&=&p^\ell_0(D_-)=(-1)^{\nu_1}z^{\mu_1+\nu_1-4},\\
zp^h(D_0)&=& \left\{
\begin{array}{ll}
z^{\mu_1-\nu_1},& {\rm if}\ \mu_1\ge \nu_1,\\
(-1)^{\nu_1-\mu_1}z^{\nu_1-\mu_1-2},& {\rm if}\ \mu_1< \nu_1.
\end{array}
\right.
\end{eqnarray*}
Thus $e(\L)=-s(D)-w(D)+1$ with $p^\ell_0(\L)=p^\ell_0(D_-)=(-1)^{\nu_1}z^{\mu_1+\nu_1-4}\in (-1)^{1+\kappa^++c^-(D)}F$. On the other hand, $\mu_1+\nu_1-4\ge \mu_1-\nu_1$ with equal sign holding only when $\nu_1=2$ (keep in mind that we only consider the case $\nu_1>1$) and $\mu_1+\nu_1-4> \nu_1-\mu_1-2$, we have $E(\L)=s(D)-w(D)-1$ with $p^h_0(\L)=(-1)^{c^-(D)}2z^{\mu_1+\nu_1-4}$ for $2=\nu_1<\mu_1$ and $p^h_0(\L)=(-1)^{c^-(D)}z^{\mu_1+\nu_1-4}$ otherwise. This proves the case $\alpha_1=1$. RLR for $\alpha_1\ge 1$ by induction on $\alpha_1$ and this proves the case for $\kappa^+=1$. 

\medskip
Assuming that the statement holds for $1\le \kappa^+\le q$ for some $q\ge 1$ and consider $\kappa^+=q+1$, start with $\alpha_{\kappa^+}=1$. The induction hypothesis applies to $D_0$ (with $s(D_0)=s(D)-1$, $w(D_0)=w(D)-2$ and $r^+(D_0)=r^+(D)$) and $D_-$ is the connected sum of $T_o(2\alpha_j,2)$'s ($1\le j\le q$), $T_p(\mu_1,2)$ and $T_p(-\nu_1,2)$. It follows that 
\begin{eqnarray*}
-2+E(D_-)&=&-1+E(D_0)=s(D)-w(D)-1,\\
zp^h_0(D_0)&\in&(-1)^{c^{-}(D_0)}F=(-1)^{c^{-}(D)}F,\\
p^h_0(D_-)&\in&(-1)^{c^{-}(D_-)}F=(-1)^{c^{-}(D)}F,\\
-2+e(D_-)&=&-s(D)-w(D)+1+2r^+(D)<-1+e(D_0)=-s(D)-w(D)+3+2r^+(D),\\
p^\ell_0(D_-)&=&(-1)^{1-\kappa^++\nu_1}z^{\mu_1+\nu_1-\kappa^+-3}\in (-1)^{1+\kappa^++c^-(D)}F.
\end{eqnarray*}
The result follows. RLR  for the general case $\alpha_{\kappa^+}>1$ by induction on $\alpha_{\kappa^+}>1$. This completes the proof for Case 1.

\medskip\noindent
Case 2. $\kappa^+=0$, $\kappa^-\ge 1$. In this case $s(D)=2+2\sum \beta_i -\kappa^-$, $w(D)=\mu_1-\nu_1-2\sum \beta_i$, $r^+(D)=0$ and $r^-(D)=\sum \beta_i-\kappa^-$. Here we prove (\ref{P4.2e1}) and (\ref{P4.2e2}) together with the claim that 
$p^h_0(\L)\in  (-1)^{c^{-}(D)}F$ and 
$p^\ell_0(\L)\in (-1)^{1+\kappa^-+c^{-}(D)}F$
by double induction on $\kappa^-$ and $\beta_{\kappa^-}$. This case is similar to Case 1 and is left to the reader. 

\medskip\noindent
Case 3. $\kappa^+\ge 1$ and $\kappa^-\ge 1$. Here we prove (\ref{P4.2e1}) and (\ref{P4.2e2}) together with the claim that $p^h_0(\L)\in  (-1)^{c^{-}(D)}F$ and 
$p^\ell_0(\L)\in (-1)^{1+\kappa^-+\kappa^++c^{-}(D)}F$ using double induction on $\kappa^+$ and $\alpha_{\kappa^+}$. In this case the initial step $\kappa^+=0$ of the induction process has been established in Case 2. We leave the details to the reader. This completes the proof for Proposition \ref{P4.2}.
\end{proof}

 \medskip
 \begin{remark}\label{R4.3}{\em 
 We have now considered all possible cases under the condition $\mu_1>1$ in Propositions \ref{P4.1} and  \ref{P4.2}. The remaining cases are under the condition $\mu_1=1$. We note that if $\L\in P_3(1;-2\vert 2\alpha_1,\ldots, 2\alpha_{\kappa^+}; -2\beta_1,\ldots, -2\beta_{\kappa^-})$, then $\L\in P_2(2\alpha_1,\ldots, 2\alpha_{\kappa^+}; -2\beta_1,\ldots, -2\beta_{\kappa^-},-2)$. Thus $\L$ is the unknot if $\kappa^+=\kappa^-=0$, $\L=T_o(2(\alpha_1-1),2)$ if $\kappa^+=1$, $\kappa^-=0$ and $\L=T_o(2(\beta_1+1),2)$ if $\kappa^-=1$, $\kappa^+=0$, while the other cases of $\L$ can be obtained from \cite[Theorem 1.4]{Diao2024}. 
 We will now assume $\nu_1\ge 3$.
 }
 \end{remark}
 
\medskip
\begin{proposition}\label{P4.4} Let $\L\in P_3(1;-\nu_1\vert 2\alpha_1,\ldots,2\alpha_{\kappa^+};0)$, then
\begin{eqnarray}
E(\L)&=&
\left\{
\begin{array}{ll}
\nu_1-\kappa^+, &{\rm if}\ 
\left\{
\begin{array}{l}
\nu_1=3, \kappa^+\ge 2, \alpha_{\kappa^+}=1, \alpha_{\kappa^+-1}=1,\ {\rm or}\\
\nu_1=3, \kappa^+\ge 1, \alpha_{\kappa^+}>1, \ {\rm or}\\
\nu_1\ge 4,
\end{array}
\right.\\
\nu_1-\kappa^+-2, &{\rm if}\ 
\left\{
\begin{array}{l}
\nu_1=3, \kappa^+\ge 2, \alpha_{\kappa^+}=1, \alpha_{\kappa^+-1}>1,\ {\rm or}\\
\nu_1=3, \kappa^+=1, \alpha_{\kappa^+}=1,
\end{array}
\right.
\end{array}
\right.\label{P4.4e1}\\
e(\L)&=&\nu_1-\kappa^+-2\sum \alpha_j.\label{P4.4e2}
\end{eqnarray}
\end{proposition}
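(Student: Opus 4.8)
The plan is to prove the two displayed formulas separately, with the lowest power $e(\L)=\nu_1-\kappa^+-2\sum\alpha_j$ being the routine half and the highest power $E(\L)$, together with its case split, being the crux. As in the proofs of Propositions \ref{P4.1} and \ref{P4.2}, I would carry along through the induction the auxiliary sign data for the extreme coefficients, that is, statements of the shape $p^h_0(\L)\in(-1)^{c^-(D)}F$ and $p^\ell_0(\L)\in(-1)^{1+\kappa^++c^-(D)}F$; these are exactly what forbid cancellation when two branches of a skein resolution land on the same power of $a$. Here $\mu_1=1$ and $\kappa^-=0$, so the only negative crossings come from the $\nu_1$-strip and $c^-(D)=\nu_1$, a fact I would use repeatedly to read off the signs.

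For the inductive engine I would run, exactly as in Proposition \ref{P4.1}, a double induction on $\kappa^+$ and on $\alpha_{\kappa^+}$, applying VP$^+$ to a crossing in the $\alpha_{\kappa^+}$-strip. One of the two resolutions stays in the same $P_3$ family with $\alpha_{\kappa^+}$ (or, once $\alpha_{\kappa^+}=1$, with $\kappa^+$) lowered, so that the induction hypothesis applies; the other degenerates, after a Reidemeister II cancellation, into a connected sum of torus links $T_o(2\alpha_j,2)$ ($j<\kappa^+$) with $T_p(1,2)$ and $T_p(-\nu_1,2)$ (or, in the extreme subcases, into the unknot), governed by (\ref{TorusConnect1})--(\ref{TorusConnect4}) and Proposition \ref{P3.3}. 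Which branch plays which role depends on the case ($\alpha_{\kappa^+}=1$ versus $\alpha_{\kappa^+}>1$, and $\kappa^+=1$ versus $\kappa^+>1$), exactly as in Proposition \ref{P4.1}, and is routine. Comparing the $a^{-2}$-shifted contribution of one branch against the $a^{-1}z$-shifted contribution of the other then determines $E(\L)$ and $e(\L)$; since $T_p(1,2)$ is trivial the relevant degrees are carried by $T_p(-\nu_1,2)$, and it is through its degrees that the threshold $\nu_1=3$ enters.

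The only corner that I would not attack by direct computation is $\nu_1=3$ with $\alpha_{\kappa^+}=1$: here it is cleaner to invoke the identity $P_3(1;-3\vert 2\alpha_1,\ldots,2\alpha_{\kappa^+-1},2;0)=P_3(2;-3\vert 2\alpha_1,\ldots,2\alpha_{\kappa^+-1};0)$ from Remark \ref{remark1.3}, which places $\L$ in the family of Proposition \ref{P4.1} with $\mu_1=2$ and $\nu_1=\mu_1+1=3$; reading off (\ref{exception1}) then gives $E(\L)=\nu_1-\kappa^+$ when $\alpha_{\kappa^+-1}=1$ and $E(\L)=\nu_1-\kappa^+-2$ when $\alpha_{\kappa^+-1}>1$, while Proposition \ref{P3.3} supplies the fully reduced subcase $\kappa^+=1$ (where $\L$ is the unknot). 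I expect the main obstacle to be precisely the top power in the remaining cases, namely showing that for $\nu_1\ge 4$ (any $\alpha_j$) and for $\nu_1=3$ with all $\alpha_j>1$ the value stabilizes at $E(\L)=\nu_1-\kappa^+$ with no loss of degree: the two contributions to $a^{E(\L)}$ arrive at the same power of $a$, and one must verify, via the sign statement $p^h_0(\L)\in(-1)^{c^-(D)}F$, that their leading $z$-coefficients add rather than cancel. I would therefore establish $e(\L)$ and the sign data first, and defer the delicate $E(\L)$ case analysis---including the $\nu_1=3$ reductions---to the end.
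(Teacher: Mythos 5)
Your skeleton (double skein induction on $\kappa^+$ and $\alpha_{\kappa^+}$, plus the strip-merging identity for a corner case) is close to the paper's, and your treatment of $\nu_1=3$, $\alpha_{\kappa^+}=1$ via Remark \ref{remark1.3} and Proposition \ref{P4.1} is exactly the paper's Case 1. But there is a genuine gap at the very point you flag as the main obstacle, and your proposed resolution cannot close it. The problem is the family $\nu_1\ge 4$, $\alpha_{\kappa^+}=1$, $\kappa^+\ge 2$, which your plan insists on handling by skein induction (you reserve the identity for $\nu_1=3$ only). There, applying VP$^+$ in the $\alpha_{\kappa^+}$ strip gives $D_-=T_o(2\alpha_1,2)\#\cdots\#T_o(2\alpha_{\kappa^+-1},2)\#T_p(-\nu_1,2)$ and $D_0\in P_3(1;-\nu_1\vert 2\alpha_1,\ldots,2\alpha_{\kappa^+-1};0)$, and indeed $-2+E(D_-)=-1+E(D_0)=\nu_1-\kappa^+$; but the two leading contributions do \emph{not} add --- they cancel. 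Concretely, for $\L\in P_3(1;-4\vert 2\alpha_1,2;0)$ with $\alpha_1\ge 2$: one computes $p^h(D_-)=z\cdot f_4=z^2+1$, while $E(D_0)=3$ with $p^h(D_0)=-z$ (a two-line skein computation from $D_0$'s own resolutions $T_p(-2,2)$ and $T_p(-3,2)$), so $zp^h(D_0)=-z^2$ and the coefficient of $a^{\nu_1-\kappa^+}$ is $(z^2+1)-z^2=1$: it is nonzero only because of \emph{subleading} terms (a Fibonacci identity $f_{\nu_1}-zf_{\nu_1-1}=f_{\nu_1-2}$ is hiding here). No sign-class bookkeeping can show the leading terms "add rather than cancel", because they genuinely cancel. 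Relatedly, the specific invariants you want to carry are false for this family: $p^h_0(P_3(1;-4\vert 4;0))=-z\notin(-1)^{c^-(D)}F=F$ and $p^\ell_0(P_3(1;-4\vert 4;0))=-z\notin(-1)^{1+\kappa^++c^-(D)}F=F$; similarly $p^h_0(P_3(1;-5\vert 4;0))=z^2+1\notin -F$. The sign coherence that rescues Proposition \ref{P4.1} (where $\mu_1\ge 2$) simply fails when $\mu_1=1$, which is why even Proposition \ref{P4.1} already needs an exceptional clause ($p^h(\L)=z$) for its borderline case.

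The repair is what the paper does: the identity $P_3(1;-\nu_1\vert\ldots,2;0)=P_3(2;-\nu_1\vert\ldots;0)$ is local to the two strips involved and holds for \emph{every} $\nu_1$, not just $\nu_1=3$; so whenever $\alpha_{\kappa^+}=1$ one reduces to Proposition \ref{P4.1} (if $\nu_1=3$) or to case (c) of Proposition \ref{P4.2} (if $\nu_1\ge 4$), where the delicate coefficient analysis has already been carried out. Once those cases are excised, every remaining skein step has each extreme power contributed by a single branch ($E$ by $D_0$ alone, $e$ by $D_-$ alone), so no sign data are needed anywhere --- which is why the paper's statement of Proposition \ref{P4.4} carries none. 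Two smaller remarks: your worry about $\nu_1=3$ with all $\alpha_j>1$ is unfounded (there the top power also comes from $D_0$ alone); and for the base case $\kappa^+=1$, $\alpha_1>1$ your induction on $\alpha_1$ starting from $T_p(-(\nu_1-2),2)$ does work (again single-branch extremes), giving a mild alternative to the paper's route through the alternating reduction and \cite[Theorem 4.7]{Diao2021}.
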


\begin{proof}
Case 1. $\nu_1=3$ and $\alpha_{\kappa^+}=1$. In this case we observe that $\L$ is the unknot if $\kappa^+=1$ and $\alpha_{\kappa^+}=1$, hence $E(\L)=e(\L)=0$ as given in (\ref{P4.4e1}) and (\ref{P4.4e2}). If $\kappa^+\ge 2$, then $\L$ is equivalent to a link in $P_3(2;-3\vert 2\alpha_1,\ldots, 2\alpha_{\kappa^+-1};0)$. Thus (\ref{P4.4e1}) and (\ref{P4.4e2}) follow from Proposition \ref{P4.1}.

\medskip
Case 2. $\nu_1\ge 3$, $\kappa^+=1$ and $\alpha_{\kappa^+}>1$. 
In this case $D$ reduces to an alternating link diagram $\tilde{D}$ as shown in Figure \ref{mu1=1_nu1}.  We have $s(\tilde{D})=s(D)-2=2\alpha_1-1$, $w(\tilde{D})=w(D)-2=2\alpha_1-1-\nu_1$, $r^-(\tilde{D})=0$, $r^+(\tilde{D})=\alpha_1-2$. By \cite[Theorem 4.7]{Diao2021}, we have 
\begin{eqnarray*}
E(\L)&=&s(\tilde{D})-w(\tilde{D})-1=\nu_1-1=\nu_1-\kappa^+, \\
e(\L)&=&-s(\tilde{D})-w(\tilde{D})+1+2r^+(\tilde{D})=\nu_1-1-2\alpha_1=\nu_1-\kappa^+-2\sum \alpha_j.
\end{eqnarray*}
So (\ref{P4.4e1}) and (\ref{P4.4e2}) hold.
\begin{figure}[htb!]
\includegraphics[scale=.7]{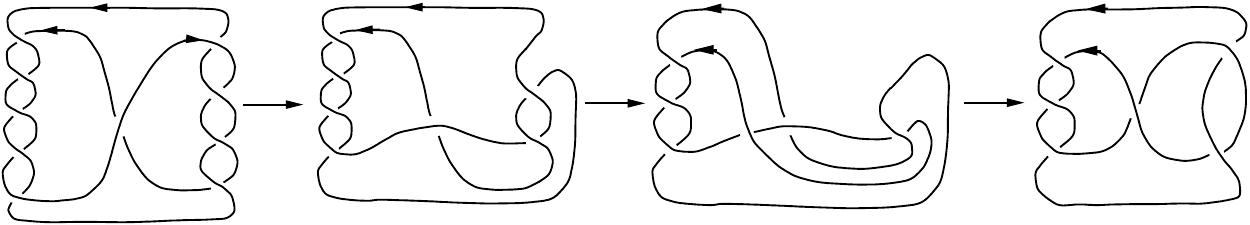}
\caption{$P_3(1;-\nu_1|2\alpha_1;0)$ reduces to the alternating link $P_3(0;-\nu_1-2,-1|2(\alpha_1-1);0)$.
}
\label{mu1=1_nu1} 
\end{figure}

\medskip
Case 3. $\kappa^+\ge 1$, $\nu_1= 3$ and $\alpha_{\kappa^+}>1$. 
We use induction on $\kappa^+$ and $\alpha_{\kappa^+}$. The initial step $\kappa^+=1$ is established in Case 2. Assuming that the statement is true for $1\le  \kappa^+\le q$ for some $q\ge 1$, consider the case $\kappa^+=q+1$ starting with $\alpha_{\kappa^+}=2$.
Apply VP$^+$ to a crossing in the $\alpha_{\kappa^+}$ strip. Case 1 applies to $D_-$ and the induction hypothesis applies to $D_0$. We have 
\begin{eqnarray*}
-1+E(D_0)&=&-1+\nu_1-q=\nu_1-\kappa^+,\\
-1+e(D_0)&=&-1+\nu_1-q-2\sum_{1\le j\le q} \alpha_j=4+\nu_1-\kappa^+-2\sum \alpha_j,\\
-2+E(D_-)&=&-2+\nu_1-\kappa^+-2=\nu_1-\kappa^+-4,\\
-2+e(D_-)&=&-2+\nu_1-\kappa^+-2-2\sum_{1\le j\le q} \alpha_j=\nu_1-\kappa^+-2\sum \alpha_j.
\end{eqnarray*}
The result follows by simple comparison of the above.

\medskip
Case 4. $\nu_1> 3$. Use induction on $\kappa^+$, starting from $\kappa^+=1$. If $\alpha_1=1$, then $\L$ is equivalent to the torus link $T_p(-(\nu_1-2),2)$, and (\ref{P4.4e1}), (\ref{P4.4e2}) follow by direct computation using (\ref{HTp(-n)}). If $\alpha_1>1$, then the result follows from Case 2. Assume now that (\ref{P4.4e1}), (\ref{P4.4e2}) hold for $1\le \kappa^+\le q$ for some $q\ge 1$ and consider $\kappa^+=q+1$. If $\alpha_{\kappa^+}=1$, then $\L$ is equivalent to a link in 
$P_3(2;-\nu_1\vert 2\alpha_1,\ldots, 2\alpha_{\kappa^+-1};0)$, and the result follows from case (c) of Proposition \ref{P4.2}. Assume (\ref{P4.4e1}), (\ref{P4.4e2}) hold for $1\le \alpha_{\kappa^+}\le z$ for some $z\ge 1$ and consider $\alpha_{\kappa^+}=z+1$. Apply VP$^+$ to a crossing in the $\alpha_{\kappa^+}$ strip. The induction hypothesis applies to both $D_0$ and $D_-$. Simple calculation and comparison then shows that $E(\L)=-1+E(D_0)=\nu_1-\kappa^+$ and $e(\L)=-2+e(D_-)=\nu_1-\kappa^+-2\sum\alpha_j$, as desired. This completes the proof of Proposition \ref{P4.4}.
\end{proof}

\medskip
\begin{proposition}\label{P4.5} Let $\L\in P_3(1;-\nu_1\vert 2\alpha_1,\ldots,2\alpha_{\kappa^+};-2\beta_1,\ldots,-2\beta_{\kappa^-})$ such that $\nu_1\ge 2$ and $\kappa^->0$, then
\begin{eqnarray}
E(\L)&=&\nu_1+\kappa^--\kappa^++2\sum \beta_i,\label{P4.5e1}\\
e(\L)&=&
\left\{
\begin{array}{ll}
\nu_1+\kappa^--\kappa^+-2, &{\rm if} \ \kappa^+=0,\\
\nu_1+\kappa^--\kappa^+-2\sum \alpha_j, &{\rm if} \ \kappa^+>0.
\end{array}
\right. \label{P4.5e2}
\end{eqnarray}
\end{proposition}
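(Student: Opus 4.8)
The plan is to run the whole argument by applying the procedure VP$^-$ of Remark \ref{remark2.3} to one crossing of the $\beta_{\kappa^-}$ strip, which exists because $\kappa^->0$, and to carry out a double induction on $\kappa^-$ and on $\beta_{\kappa^-}$. Writing $D=D_-$ for the standard diagram, the skein relation (\ref{Skein2}) gives $H(D,z,a)=a^2H(D_+,z,a)-azH(D_0,z,a)$, so the two competing contributions to the extreme $a$-powers are $2+E(D_+),\,2+e(D_+)$ from $D_+$ and $1+E(D_0),\,1+e(D_0)$ from $D_0$, the latter carrying the extra factor $-z$. Here $D_+$ switches one negative crossing of the $\beta_{\kappa^-}$ strip to positive; after a Reidemeister II cancellation it lowers $\beta_{\kappa^-}$ by one, or, when $\beta_{\kappa^-}=1$, deletes the strip altogether. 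Deleting a strip breaks the main cycle, so that $D_+$ becomes the connected sum of $T_p(1,2)$, $T_p(-\nu_1,2)$, the links $T_o(2\alpha_j,2)$, and the surviving $T_o(-2\beta_i,2)$ with $i<\kappa^-$, whose $E$, $e$ and leading $z$-coefficients are furnished by (\ref{TorusConnect3}) and (\ref{TorusConnect4}) once one records $s(T_o(2k,2))=2k$. The diagram $D_0$ is the oriented smoothing of the chosen crossing; it merges the two Seifert circles joined by the $\beta_{\kappa^-}$ strip and reduces to a smaller instance of the present proposition (to which the induction hypothesis applies) or, at the bottom of the induction, to a link governed by Proposition \ref{P4.4} or by (\ref{HTp(-n)}). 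The value $\nu_1=2$ is excluded from this scheme and handled first: by Remark \ref{R4.3} such a link lies in a Type 2 family of \cite{Diao2024}, and one checks that the $E$ and $e$ recorded there agree with (\ref{P4.5e1}) and (\ref{P4.5e2}).

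In the base case $\kappa^-=1$, $\beta_1=1$ (with $\nu_1\ge 3$), the diagram $D_+$ is the connected sum $T_p(-\nu_1,2)\#\big(\#_j T_o(2\alpha_j,2)\big)$, the factor $T_p(1,2)$ being trivial, while $D_0$ belongs to the family of Proposition \ref{P4.4} with $\kappa^-=0$. A direct computation with (\ref{TorusConnect3}), (\ref{TorusConnect4}) gives $2+E(D_+)=\nu_1+\kappa^--\kappa^++2\sum\beta_i$ and $2+e(D_+)=\nu_1+\kappa^--\kappa^+-2\sum\alpha_j$, while Proposition \ref{P4.4} gives $1+E(D_0)=1+\nu_1-\kappa^+$ and a value $1+e(D_0)$ equal to $\nu_1-1$ when $\kappa^+=0$. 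Comparing these shows that $D_+$ strictly supplies the highest $a$-power, yielding (\ref{P4.5e1}); for the lowest $a$-power the two contributions $2+e(D_+)$ and $1+e(D_0)$ actually coincide when $\kappa^+>0$, and their leading $z$-coefficients add without cancelling, giving $e(\L)=\nu_1+\kappa^--\kappa^+-2\sum\alpha_j$, whereas when $\kappa^+=0$ the contribution $1+e(D_0)$ lies strictly below, giving $e(\L)=\nu_1+\kappa^--2$; this is exactly the dichotomy in (\ref{P4.5e2}). The inductive steps have the same shape: raising $\beta_{\kappa^-}$ with $\kappa^-$ fixed keeps $D_+$ inside the present family with smaller $\beta_{\kappa^-}$ and leaves $D_0$ strictly higher for the top power, so that $E(\L)=2+E(D_+)$, while for the lowest power $D_0$ takes over; raising $\kappa^-$ makes $D_+$ a connected sum of torus links, to which (\ref{TorusConnect3})--(\ref{TorusConnect4}) again apply, and hands $D_0$ to the induction hypothesis.

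Throughout I would bundle the two equalities (\ref{P4.5e1}) and (\ref{P4.5e2}) with explicit claims locating $p^h_0(\L)$ and $p^\ell_0(\L)$ in the appropriate sign classes $\pm F$ of Remark \ref{toruslink_formula}, exactly as in the proofs of Propositions \ref{P4.1}, \ref{P4.2} and \ref{P4.4}, since these coefficient claims are what the induction actually consumes whenever $D_+$ and $D_0$ contribute to the same extreme $a$-power. The main obstacle is precisely the lowest-power analysis when $\kappa^+>0$, where $2+e(D_+)$ and $1+e(D_0)$ tie: one must trace $e(\L)$ to the $T_o(2\alpha_j,2)$ summands inside $D_+$ through the term $2r^+(D_+)$ with $r^+(D_+)=-\kappa^++\sum\alpha_j$, confirm that $D_0$ never drops below this level, and verify that the two surviving leading $z$-coefficients lie in one and the same $\pm F$ class so that no cancellation occurs at the extreme power. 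The degenerate positive lone crossing $\mu_1=1$ (so $\delta^+=1$) also has to be watched in the torus-link bookkeeping, since it is what makes the Seifert-circle count of $D_+$ and the exponents in (\ref{TorusConnect4}) come out as stated, and it is the reason this family must be treated separately from the $\mu_1>1$ Propositions \ref{P4.1} and \ref{P4.2}.
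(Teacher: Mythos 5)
You take a genuinely different route from the paper: you resolve crossings in the negative antiparallel strips (VP$^-$ on the $\beta_{\kappa^-}$ strip, inducting on $\kappa^-$ and $\beta_{\kappa^-}$, with Proposition \ref{P4.4} and (\ref{HTp(-n)}) as the floor), whereas the paper inducts on $\kappa^+$ and $\alpha_{\kappa^+}$, applying VP$^+$ to a crossing of the $\alpha_{\kappa^+}$ strip, settling the base case $\kappa^+=0$ by an N-move and \cite[Theorem 4.7]{Diao2021}, and settling each sub-base case $\alpha_{\kappa^+}=1$ by identifying $P_3(1;-\nu_1\vert \ldots,2;\ldots)$ with a $P_3(2;-\nu_1\vert \ldots;\ldots)$ link to which Proposition \ref{P4.2} applies. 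Your structural description of $D_+$ (connected sum of the closures of the surviving strips, with the $\mu_1=1$ kink discarded) and of $D_0$ (the pretzel link with the $\beta_{\kappa^-}$ strip deleted from the cycle) is correct and consistent with the paper's own usage, and your exponent arithmetic in the base case checks out; in particular $D_+$ does win the highest $a$-power strictly, so (\ref{P4.5e1}) is fine on your route.

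The genuine gap is exactly the point you defer to the end. Whenever $\kappa^+>0$ and $\kappa^-$ is incremented with $\beta_{\kappa^-}=1$ --- including the very first case $\kappa^-=1$, $\beta_1=1$ --- the two candidate lowest powers tie: $2+e(D_+)=1+e(D_0)=\nu_1+\kappa^--\kappa^+-2\sum\alpha_j$, so to conclude (\ref{P4.5e2}) you must rule out cancellation between $a^2p^\ell(D_+)$ and $-azp^\ell(D_0)$ at this power. For $D_+$ the sign class of $p_0^\ell$ is supplied by (\ref{TorusConnect4}), but at the bottom of your induction $D_0$ is a link governed by Proposition \ref{P4.4}, and Proposition \ref{P4.4} --- unlike Propositions \ref{P4.1} and \ref{P4.2}, whose proofs carry explicit $\pm F$ claims --- records only $E$ and $e$; its proof is arranged so that all of its own comparisons are strict, and it therefore never determines the class of $p_0^\ell$ for those links. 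Consequently your plan to ``bundle'' sign-class claims into the induction cannot get started: the datum the induction needs at its base is not available from anything you cite, and producing it amounts to re-proving a strengthened Proposition \ref{P4.4} with coefficient tracking through all of its cases (including extracting the classes from \cite[Theorem 4.7]{Diao2021}), which you have not done. This is precisely what the paper's choice of induction variable buys: skeining on the $\alpha$ strips rather than the $\beta$ strips makes every comparison of extreme $a$-powers strict (the competing exponents always differ by at least $2$), so the paper's proof of this proposition needs no coefficient information at all.
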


\begin{proof} We use induction on $\kappa^+$, starting from $\kappa^+=0$. In this case, an N-move as shown in Figure \ref{Type3fig} reduces $D$ to a Type M2 link diagram $\hat{D}$ as defined in \cite{Diao2021}. We have $s(\hat{D})=3-\kappa^-+2\sum \beta_i$, $w(\hat{D})=-\nu_1-2\sum \beta_i$, $r^+(\hat{D})=0$ and $r^-(\hat{D})=1-\kappa^-+\sum \beta_i$, and the result follows from  \cite[Theorem 4.7]{Diao2021} which asserts that $E(\L)=E(\hat{D})=s(\hat{D})-w(\hat{D})-1-2r^-(\hat{D})$ and $e(\L)=e(\hat{D})=-s(\hat{D})-w(\hat{D})+1+2r^+(\hat{D})$.

\medskip
Now consider the case $\kappa^+=1$, starting from $\alpha_1=1$. In this case $\L$ is equivalent to a link in $P_3(2;-\nu_1\vert 0;-2\beta_1,\ldots,-2\beta_{\kappa^-})$ and the result follows from Proposition \ref{P4.2}. If $\alpha_1=2$, apply VP$^+$ to a crossing in the $\alpha_1$ strip. By the above discussions for the case of $\kappa^+=0$ and $\kappa^+=1$, $\alpha_1=1$, we have
\begin{eqnarray*}
-1+E(D_0)&=&\nu_1+\kappa^--1+2\sum\beta_i=\nu_1+\kappa^--\kappa^++2\sum\beta_i,\\
-1+e(D_0)&=&-3+\nu_1+\kappa^-,\\
-2+E(D_-)&=&\nu_1+\kappa^--3+2\sum\beta_i,\\
-2+e(D_-)&=&-5+\nu_1+\kappa^-=\nu_1+\kappa^--\kappa^+-2\sum\alpha_j.
\end{eqnarray*}
The result follows trivially. RLR for the general case $\alpha_1\ge 3$.

\medskip
Assume that the statement of the proposition holds for $1\le \kappa^+\le q$ for some $q\ge 1$ and consider $\kappa^+=q+1$. If $\alpha_{\kappa^+}=1$, then again $\L$ is equivalent to a link in $P_3(2;-\nu_1\vert 2\alpha_1,\ldots,2\alpha_{\kappa^+-1};-2\beta_1,\ldots,-2\beta_{\kappa^-})$ and the result follows from Proposition \ref{P4.2}. RLR for the induction step (in which case we apply VP$^+$ to a crossing in the $\alpha_{\kappa^+}$ strip and the induction hypothesis applies to both $D_-$ and $D_0$). This completes the proof of Proposition \ref{P4.5}.
\end{proof}

\section{The cases when $\rho^++\rho^-\ge 4$, $\kappa^++\kappa^->0$ or $\rho^++\rho^-=2$, $\rho^+\cdot \rho^-=0$, $\kappa^++\kappa^->0$}\label{Type3S3}

\medskip
The remaining cases that are not covered in Sections \ref{Type3basic_case} and \ref{Type3S2} are $\rho^++\rho^-\ge 4$, $\kappa^++\kappa^->0$ or $\rho^++\rho^-=2$, $\rho^+\cdot \rho^-=0$, $\kappa^++\kappa^->0$. For the first case, keep in mind that we only need to consider the link diagrams under the condition that $\nu_i>1$ (if $\rho^->0$). For the second case, we only need to consider $\rho^+=2$, $\rho^-=0$ and $\kappa^++\kappa^->0$ as the case $\rho^+=0$, $\rho^-=2$ and $\kappa^++\kappa^->0$ can then be established using the mirror image argument. We first make the following observation.

\begin{remark}
\label{additional_reduction}
{\em If $\delta^+\ge 1$, then one of these lone crossings can be combined with a positive strip $2\alpha_j$ to create a new diagram with one less Seifert circles as shown in Figure \ref{alpha_sigma_reduction}. We shall call such a move an $A$-move. Notice that after an $A$-move, there are still $\alpha_j-1$ (local) reduction moves as before.}
\end{remark}

\begin{figure}[htb!]
\includegraphics[scale=.7]{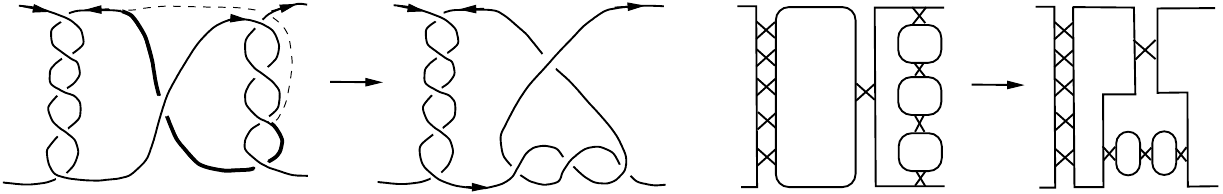}
\caption{Left: the illustration of an $A$-move, where the strand to be re-routed is highlighted by a thickened line and the diagram in a better drawing after the move; Right: the effect of the $A$-move on the Seifert circle decomposition of the diagram.} 
\label{alpha_sigma_reduction} 
\end{figure}

\begin{proposition}\label{P5.2}
Let $D$ be a standard diagram of $\L\in P_3(\mu_1,\ldots,\mu_{\rho^+};-\nu_1,\ldots,-\nu_{\rho^-}\vert 0; -2\beta_1,\ldots,-2\beta_{\kappa^-})$ 
with $2n=\rho^++\rho^-\ge 4$ and $\delta^+>\rho^-$, then we have
\begin{eqnarray}
E(\L)&=&s(D)-w(D)-1-2r^-(D),\ p_0^h(\L)\in  (-1)^{c^-(D)}F,\\
e(\L)&=&-s(D)-w(D)+1+2\min\{\delta^+,n-1\},\\
p^\ell_0(\L)&\in &
\left\{
\begin{array}{ll}
(-1)^{\rho^-+\kappa^-+c^-(D)}F,&\ {\rm if}\ \delta^+<n-1,\\
(-1)^{1+\rho^-+\kappa^-+c^-(D)}F,&\ {\rm if}\ \delta^+\ge n-1.
\end{array}
\right.
\end{eqnarray}
\end{proposition}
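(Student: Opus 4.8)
The plan is to mirror the argument of Proposition \ref{P3.5}, treating the negative antiparallel strips $-2\beta_1,\ldots,-2\beta_{\kappa^-}$ as the only new ingredient relative to the $\kappa^-=0$ situation handled there. First I would apply the $N$-moves of Figure \ref{Type3fig} to each of the $\rho^-$ negative parallel strips, absorbing every one of them into one of the $\delta^+$ positive lone crossings. The hypothesis $\delta^+>\rho^-$ guarantees that at least $\delta^+-\rho^-\ge 1$ lone crossings survive, so this produces a diagram $\tilde{D}$ whose main cycle carries $\rho^+-\rho^-$ Seifert circles joined by the $\mu_j$ crossings and the $\delta^+-\rho^-$ surviving lone crossings, together with the $\rho^-$ re-routed rational tangles and the untouched antiparallel strips. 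Exactly as in Proposition \ref{P3.5} one records $s(\tilde{D})=s(D)-\rho^-$ and $w(\tilde{D})=w(D)-\rho^-$, and one checks that $\tilde{D}$ is an alternating Type B Montesinos diagram in the sense of \cite[Theorem 4.1]{Diao2021}; here the key point is that each $T_o(-2\beta_i,2)$ is itself an alternating $2$-braid attaching to the main cycle, so the $N$-moves restore the global alternating pattern without interference from the antiparallel strips.

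With $\tilde{D}$ in hand I would invoke \cite[Theorem 4.1]{Diao2021} to obtain $E(\L)=E(\tilde{D})=s(\tilde{D})-w(\tilde{D})-1-2r^-(\tilde{D})$ and the companion formula for $e(\tilde{D})$ carrying its minimum term. Since the $N$-moves and the $\mu_j$ strips produce no MP-reducible negative lone crossings, the only contribution to $r^-(\tilde{D})$ comes from the antiparallel strips, giving $r^-(\tilde{D})=r^-(D)=-\kappa^-+\sum\beta_i$ as in Remark \ref{toruslink_formula}; substituting $s(\tilde{D})=s(D)-\rho^-$, $w(\tilde{D})=w(D)-\rho^-$ then yields the stated $E(\L)$. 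For $e$, the Type B formula has a minimum taken over $\delta^+-\rho^-$ and $n-1-\rho^-$, and adding back $2\rho^-$ converts this into $e(\L)=-s(D)-w(D)+1+2\min\{\delta^+,n-1\}$; the crucial observation is that the antiparallel strips enter only through $s(D)$ and $w(D)$ and never change which of the two quantities realizes the minimum, since that is governed entirely by the positive parallel part.

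For the leading coefficients I would smooth every surviving lone crossing of $\tilde{D}$, obtaining a diagram $\hat{D}$ that is a connected sum of the torus links $T_p(\mu_j,2)$, the negative antiparallel links $T_o(-2\beta_i,2)$, a subdiagram $D_1$ collecting the re-routed negative parallel tangles, and a disjoint union of trivial knots. Applying Remark \ref{elementary_remark} to the alternating piece $D_1$ together with the closed forms (\ref{HTo(-2k)}), (\ref{HTp(n)}) and (\ref{HTp(-n)}) for the torus factors, I would compute $p^h_0(\hat{D})$ and $p^\ell_0(\hat{D})$ as explicit monomials and recover $p^h_0(\tilde{D})$, $p^\ell_0(\tilde{D})$ by multiplying by the appropriate powers of $z$ dictated by the three regimes in \cite[Theorem 4.1]{Diao2021}, just as in the displayed computation inside the proof of Proposition \ref{P3.5}. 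Each factor $T_o(-2\beta_i,2)$ contributes one extra sign, and this is precisely the origin of the additional $\kappa^-$ in the exponent of $p^\ell_0(\L)$; tracking these signs through the cases $\delta^+<n-1$ and $\delta^+\ge n-1$ produces the claimed membership in $(-1)^{\rho^-+\kappa^-+c^-(D)}F$ or $(-1)^{1+\rho^-+\kappa^-+c^-(D)}F$.

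I expect the main obstacle to be bookkeeping rather than conceptual. The two delicate points are verifying that after the $N$-moves $\tilde{D}$ really is the alternating Type B diagram to which \cite[Theorem 4.1]{Diao2021} applies, with the antiparallel strips accounting for all of $r^-$ and spoiling neither the alternating property nor the index of the minimum, and then carrying the $\kappa^-$ sign contributions of the factors $T_o(-2\beta_i,2)$ exactly through the two regimes so as to land in the stated sign class. These are the same two places where the $\kappa^-=0$ computation in Proposition \ref{P3.5} demanded care, now enriched with the antiparallel data. (An alternative would be to induct on $\kappa^-$ using VP$^-$ on a $\beta_{\kappa^-}$ strip with base case Proposition \ref{P3.5}, but the oriented smoothing $D_0$ of an antiparallel crossing changes the cycle structure awkwardly, so the direct $N$-move reduction above seems cleaner.)
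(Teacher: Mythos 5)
Your plan is not the paper's proof, and it has a genuine gap at exactly the point where the new difficulty of this proposition lies. The paper uses the N-move reduction to an alternating Type B diagram \emph{only} in the special case $\beta_i=1$ for all $i$, and this restriction is not cosmetic. When $\beta_i=1$, the strip contributes a single small Seifert circle sharing \emph{two} crossings with the big circle $C$, so $\tilde{D}$ has no lone crossings outside the main cycle, $r^-(D)=0$, and the computation of Proposition \ref{P3.5} (including the application of Remark \ref{elementary_remark} to $D_1$, which requires a diagram with no lone crossings) goes through verbatim, the only change being the extra $\kappa^-$ in the powers of $(-1)$. When some $\beta_i\ge 2$, the strip becomes a chain of $2\beta_i-1$ small circles in which \emph{every} crossing is a lone crossing. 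The machinery you invoke from \cite[Theorem 4.1]{Diao2021} -- in particular the relation $p_0^h(\tilde{D})=z^{\#}p_0^h(\hat{D})$, where $\hat{D}$ is obtained by smoothing \emph{all} lone crossings -- is used in this paper only for diagrams whose lone crossings all lie on the main cycle. Your $\hat{D}$ smooths only the surviving main-cycle lone crossings and keeps the $T_o(-2\beta_i,2)$ factors intact; that is not the construction the cited theorem speaks about, and the chain lone crossings genuinely cannot be processed one-per-$z$-factor (a chain of $k$ circles joined by $k-1$ lone crossings yields only $\lfloor k/2\rfloor$ MP reductions, which is why $E$ carries the term $-2r^-(D)=-2\sum(\beta_i-1)$ rather than a count of lone crossings). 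So your assertions that ``$r^-(\tilde{D})$ enters as a separate additive term,'' that the min is ``governed entirely by the positive parallel part,'' and that the sign class follows by ``tracking these signs through the two regimes'' are precisely the statements to be proved; they are the content of the proposition, not consequences of the citation.

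What the paper actually does is: establish the base case $\beta_i=1$ for all $i$ as above, and then induct on $\sum\beta_i$. If some $\beta_{\kappa^-}\ge 2$, apply VP$^-$ to a crossing of that strip; then $D_+$ is the standard diagram with $\beta_{\kappa^-}$ decreased by one, $D_0$ is the standard diagram with that strip deleted entirely, the induction hypothesis applies to both, and degree comparison gives the claim ($E$ is contributed by $D_+$, $e$ by $D_0$, with the signs carried along). Note also that your parenthetical reason for rejecting this route is mistaken: the oriented smoothing of an antiparallel crossing does \emph{not} disturb the main cycle at all -- the remaining crossings of the strip undo by Reidemeister I moves, so $D_0$ is again a standard Type 3 diagram with $\kappa^-$ reduced by one. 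This benign behavior is exactly what makes the paper's induction clean, and it is the natural repair for your argument: keep your N-move computation as the $\beta_i\equiv 1$ base case and add the $\sum\beta_i$ induction on top of it.
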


\begin{proof} Consider first the case $\beta_i=1$ for $1\le i\le \kappa^-$ (so $r^-(D)=0$). First change $D$ to a Type B diagram $\tilde{D}$ by an N-move as demonstrated in Figure \ref{Type3fig}. The structure of $\tilde{D}$ is almost identical to its counterpart in Proposition \ref{P3.5}, with the only difference being that
the diagram $D_1$ in $\hat{D}$ here has $\kappa^-$ additional Seifert circles attached to $C$, each via $2$ negative crossings. Thus the result follows from the same calculation as we did in Proposition \ref{P3.5}. (The only difference is that when applying Remark \ref{elementary_remark} to the $e$-power of $D_1$, $\kappa^-$ needs to be added to the powers of $(-1)$.)
This proves the case when $\beta_i=1$ for all $1\le i\le \kappa^-$. If we use induction on $\sum \beta_i$, this also establishes the first step $\sum \beta_i=1$. Assume that the statement is true for some $\sum \beta_i$ values the sum is increased by one. If after this increase we still have $\beta_i=1$ for all $1\le i\le \kappa^-$, then the statement holds and there is nothing more to prove. Thus we only need to consider the case that $\beta_i>1$ for some $i$. Say we have $\beta_{\kappa^-}\ge 2$. Apply VP$^-$ to a crossing in the $\beta_{\kappa^-}$ strip. The induction hypothesis now applies to both $D_0$ and $D_+$. We have $s(D_0)=s(D)+1-2\beta_{\kappa^-}$, $w(D_0)=w(D)+2\beta_{\kappa^-}$, $c^-(D_0)=c^-(D)-2\beta_{\kappa^-}$, $r^-(D_0)=r^-(D)+1-\beta_{\kappa^-}$, $s(D_+)=s(D)-2$, $w(D_+)=w(D)+2$, $c^-(D_+)=c^-(D)-2$, $r^-(D_+)=r^-(D)-1$, it follows that
\begin{eqnarray*}
1+E(D_0)&=&1+s(D_0)-w(D_0)-1-2r^-(D_0) \\
&=& s(D)-w(D)-1-2\beta_{\kappa^-}-2r^-(D),\\
1+e(D_0)&=&1-s(D_0)-w(D_0)+1+2\min\{\delta^+,n-1\}\\
&=& -s(D)-w(D)+1+2\min\{\delta^+,n-1\},\\
-zp_0^\ell(D_0)&\in & 
\left\{
\begin{array}{ll}
(-1)(-1)^{\rho^-+(\kappa^--1)+(c^-(D)-2\beta_{\kappa^-})}F,&\ {\rm if}\ \delta^+<n-1,\\
(-1)(-1)^{1+\rho^-+(\kappa^--1)+(c^-(D)-2\beta_{\kappa^-})}F,&\ {\rm if}\ \delta^+\ge n-1,
\end{array}
\right.\\
&=& 
\left\{
\begin{array}{ll}
(-1)^{\rho^-+\kappa^-+c^-(D)}F,&\ {\rm if}\ \delta^+<n-1,\\
(-1)^{1+\rho^-+\kappa^-+c^-(D)}F,&\ {\rm if}\ \delta^+\ge n-1,
\end{array}
\right.\\
2+E(D_+)&=&2+(s(D)-2)-(w(D)+2)-1-2(r^-(D)-1)\\
&=&s(D)-w(D)-1-2r^-(D),\\
p_0^h(D_+)&\in& (-1)^{c^-(D)-2}F=(-1)^{c^-(D)}F,\\
2+e(D_-)&=&2-(s(D)-2)-(w(D)+2)+1+2\min\{\delta^+,n-1\}\\
&=&-s(D)-w(D)+3+2\min\{\delta^+,n-1\}.
\end{eqnarray*}
By simple comparison, we see that the statement holds and the induction is complete.
\end{proof}

\medskip
\begin{corollary}\label{C5.3}
The statements of Proposition \ref{P5.2} hold if $\rho^+=2$, $\rho^-=0$ and $\kappa^+=0$. That is, 
\begin{eqnarray}
E(\L)&=&s(D)-w(D)-1-2r^-(D),\ p_0^h(\L)\in  F,\\
e(\L)&=&-s(D)-w(D)+1,\ p^\ell_0(\L)\in (-1)^{1+\kappa^-}F
\end{eqnarray}
since $\rho^-=0$ and $c^-(D)$ is even.
\end{corollary}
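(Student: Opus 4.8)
The plan is to recognize this Corollary as the boundary case $n=1$ (that is, $\rho^++\rho^-=2$) of Proposition \ref{P5.2}, which was stated only under the hypothesis $2n\ge 4$, and to verify that the argument given there survives verbatim for the configuration $\rho^+=2$, $\rho^-=0$, $\kappa^+=0$. Once this is confirmed, the simplified formulas follow by substituting $n=1$ and reading off the signs. Since the hypothesis $\delta^+>\rho^-=0$ of Proposition \ref{P5.2} forces $\delta^+\ge 1$, we have $\min\{\delta^+,n-1\}=\min\{\delta^+,0\}=0$, so the $e$-power collapses to $-s(D)-w(D)+1$ and we are always in the branch $\delta^+\ge n-1$. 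Moreover $\rho^-=0$, and because the only negative crossings come from the $\kappa^-$ strips of $2\beta_i$ crossings we have $c^-(D)=\sum 2\beta_i$, which is even; hence $(-1)^{c^-(D)}=1$ and $(-1)^{\rho^-}=1$, turning $(-1)^{c^-(D)}F$ into $F$ and $(-1)^{1+\rho^-+\kappa^-+c^-(D)}F$ into $(-1)^{1+\kappa^-}F$, exactly as claimed.

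First I would treat the base case $\beta_i=1$ for all $i$ (so $r^-(D)=0$). The essential difference from Proposition \ref{P5.2} is that with $\rho^-=0$ there is no negative odd strip on which to perform an N-move, so here $\tilde D=D$ already has the required shape: its Seifert circle decomposition is the core governed by Proposition \ref{P3.5} specialized to $\rho^+=2$, $\rho^-=0$ (a cycle of $\rho^+-\rho^-=2$ Seifert circles joined by the positive strips and the $\delta^+$ lone positive crossings), with $\kappa^-$ further Seifert circles attached to it, each by a pair of negative crossings. I would then run the same computation as in Proposition \ref{P3.5}, using Remark \ref{elementary_remark} on the subdiagram carrying the attached circles and Remark \ref{toruslink_formula} on the torus pieces; the only bookkeeping change is that each of the $\kappa^-$ attached circles adds one $-2$ strip, contributing $\kappa^-$ to the exponent of $(-1)$ in $p^\ell_0(\L)$ and increasing $s(D)$, $w(D)$ and $c^-(D)$ in the usual way. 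This yields $E(\L)=s(D)-w(D)-1-2r^-(D)$ and $e(\L)=-s(D)-w(D)+1$ with the asserted sign patterns.

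To remove the restriction $\beta_i=1$ I would induct on $\sum\beta_i$ exactly as in Proposition \ref{P5.2}: if some $\beta_{\kappa^-}\ge 2$, apply VP$^-$ to a crossing in that strip and compare the contributions of $D_0$ and $D_+$ (both covered by the induction hypothesis) through the recorded values of $s$, $w$, $c^-$ and $r^-$. Since this step manipulates only the negative even strips and never invokes $2n\ge 4$, it transfers without change. The one point that genuinely requires care, and which I expect to be the main obstacle, is precisely the base case: one must confirm that when the main cycle shrinks to only $\rho^+=2$ Seifert circles, so that no N-move is available and the core is as small as possible, the diagram still has the Type B structure that Proposition \ref{P3.5} and Remark \ref{elementary_remark} require, with no degeneracy either in the two-circle cycle or in the highest and lowest $a$-power monomials. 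Verifying that this small-cycle configuration produces coefficient monomials lying in the asserted sign classes, rather than accidentally cancelling, is the crux; everything else reduces to substituting $n=1$ and to the parity observation that $c^-(D)$ is even.
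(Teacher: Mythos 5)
Your route is the same as the paper's: specialize the proof of Proposition \ref{P5.2} to $\rho^-=0$ (no N-move is needed or possible), settle the case $\beta_i=1$ for all $i$ first, then induct on $\sum\beta_i$ with VP$^-$. Your parity bookkeeping is right, and the induction step does transfer verbatim, since nothing in it uses $2n\ge 4$.

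The gap is the base case, which you flag as the crux but leave unverified, and which you also misdescribe in a way that would derail a literal execution. You call the core ``a cycle of $2$ Seifert circles joined by the positive strips and the $\delta^+$ lone positive crossings''; but when the main cycle has length $2$, the two cycle circles share \emph{all} $\mu_1+\mu_2\ge 2$ positive crossings, so by the paper's definition no crossing of a $\mu_j=1$ strip is lone (this is exactly why $\min\{\delta^+,n-1\}=0$ and why no MP move is available there). Hence the lone-crossing machinery of Proposition \ref{P3.5} (smoothing lone crossings, the factor $z^{\delta^+-\rho^-}$, the cycle-with-lone-crossings structure borrowed from \cite[Theorem 4.1]{Diao2021}) is the wrong tool here. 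What resolves your crux is the opposite observation: when every $\beta_i=1$, each attached circle shares its two negative crossings with a single host circle, so the standard diagram is a reduced alternating diagram with no lone crossings at all, and Remark \ref{elementary_remark} applies to the whole diagram at once. With $s(D)=2+\kappa^-$ and $c^-(D)=2\kappa^-$ it yields $E(\L)=s(D)-w(D)-1$, $e(\L)=-s(D)-w(D)+1$, $p_0^h(\L)\in(-1)^{c^-(D)}F=F$, and $p_0^\ell(\L)\in(-1)^{c^-(D)+s(D)-1}F=(-1)^{1+\kappa^-}F$, which is exactly the claim (with $r^-(D)=0$); no cancellation can occur because these leading coefficients are single monomials. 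This is also the content of the paper's alternative argument that $\L$ here is a Type B alternating link, so the proof of \cite[Theorem 4.7]{Diao2021} applies. Finally, a point of scope: you retain the inherited hypothesis $\delta^+>\rho^-=0$, but the corollary is meant to hold, and is used (e.g., for formula (\ref{MT1e3})), with no restriction on $\delta^+$; since your argument never actually uses $\delta^+\ge 1$, you should run it for all $\delta^+\in\{0,1,2\}$, as the paper intends.
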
 

\begin{proof}
One can verify that the proof of Proposition \ref{P5.2} goes through by substituting $\rho^-=0$ and ignoring the restriction on $\delta^+$ (since there are no N-moves can be made). Alternatively, one can prove this by following the proof of \cite[Theorem 4.7]{Diao2021} since $\L$ in this case is a Type B alternating link as defined in \cite{Diao2021}.
\end{proof}

\medskip
\begin{proposition}\label{P5.4}
Let $\L\in P_3(\mu_1,\ldots,\mu_{\rho^+};-\nu_1,\ldots,-\nu_{\rho^-}\vert 2\alpha_1,\ldots, 2\alpha_{\kappa^+}; -2\beta_1,\ldots, -2\beta_{\kappa^-})$ and $D$ be a standard diagram of $\L$. If $2n=\rho^++\rho^-\ge 4$ and $\delta^+> \rho^-+\kappa^+$, then we have
\begin{eqnarray}
E(\L)&=&s(D)-w(D)-1-2r^-(D),\\
p_0^h(\L)&\in& (-1)^{c^-(D)}F,\\
e(\L)&=&-s(D)-w(D)+1+2\kappa^++2\min\{\delta^+-\kappa^+,n-1\}+2r^+(D),\\
p_0^\ell(\L)&\in&
\left\{
\begin{array}{ll}
(-1)^{\rho^-+\kappa^-+c^-(D)}F,&\ {\rm if}\ \delta^+-\kappa^+<n-1,\\
(-1)^{1+\rho^-+\kappa^-+c^-(D)}F,&\ {\rm if}\ \delta^+-\kappa^+\ge n-1.
\end{array}
\right.
\end{eqnarray}
\end{proposition}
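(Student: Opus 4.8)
The plan is to induct on $\kappa^+$, the number of positive even strips, using Proposition \ref{P5.2} as the base case $\kappa^+=0$; indeed, when $\kappa^+=0$ one has $r^+(D)=-\kappa^++\sum\alpha_j=0$, so the term $2\kappa^++2r^+(D)$ vanishes and $\min\{\delta^+-\kappa^+,n-1\}=\min\{\delta^+,n-1\}$, making the asserted formulas literally those of Proposition \ref{P5.2}. The hypothesis $\delta^+>\rho^-+\kappa^+$ is what makes this induction possible: it guarantees that, after we spend one positive lone crossing on each of the $\kappa^+$ positive even strips, at least $\delta^+-\rho^--\kappa^+>0$ lone crossings survive, so the reduced link still satisfies the analogous hypothesis. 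Moreover, the dependence on the negative data $\kappa^-,\beta_i$ lives entirely inside $r^-(D)$ and the sign $(-1)^{\rho^-+\kappa^-+c^-(D)}$, and is inherited unchanged from Proposition \ref{P5.2}; consequently I would not induct on the $\beta_i$ at all.

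For the inductive step $\kappa^+=q+1$ I would run an inner induction on $\alpha_{\kappa^+}$. When $\alpha_{\kappa^+}=1$ the last positive even strip has only two crossings, and since $\delta^+>\rho^-+\kappa^+\ge 1$ a positive lone crossing is available; combining them by an $A$-move (Remark \ref{additional_reduction}), realized at the level of pretzel data as the identity $P_3(\dots,1,\dots\vert\dots,2;\dots)=P_3(\dots,2,\dots\vert\dots;\dots)$ of Remark \ref{remark1.3}, produces a link $\L'$ with $\kappa^+$ lowered to $q$, one $\mu$-value bumped from $1$ to $2$, and $\delta^+$ lowered by $1$. Then $\delta^+-\rho^--\kappa^+$ is unchanged, so the outer hypothesis applies to $\L'$, and it remains to match the diagram invariants $s,w,c^-,r^\pm$ of $D$ to those of a diagram $D'$ of $\L'$ and read off the formulas. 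When $\alpha_{\kappa^+}\ge 2$ I would instead apply VP$^+$ to a crossing in the $\alpha_{\kappa^+}$ strip: $D_-$ reduces that strip to $2(\alpha_{\kappa^+}-1)$ crossings and falls under the inner hypothesis, while the oriented smoothing $D_0$ caps the antiparallel twist region and, after Reidemeister I moves, deletes the strip, lowering $\kappa^+$ to $q$ so that the outer hypothesis applies.

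Having set up these reductions, the bulk of the work is bookkeeping: track how $s(D),w(D),c^-(D),r^+(D),r^-(D)$ change under each move, feed the output of Proposition \ref{P5.2} (or the inductive hypotheses) together with \eqref{TorusConnect3}--\eqref{TorusConnect4} into the skein relations \eqref{Skein1}--\eqref{Skein2}, and compare $a$-degrees. For the top degree $E(\L)$ one checks that the highest $a$-power is contributed by the piece reproducing $s(D)-w(D)-1-2r^-(D)$ and that its leading $z$-coefficient stays in $(-1)^{c^-(D)}F$ (sums of terms whose coefficients have a single sign cannot cancel to the wrong sign). The lowest degree $e(\L)$ is the genuinely delicate part: one must compare the $e$-powers coming from $D_-$ (or from $\L'$) and from $D_0$ and verify that the smaller one equals $-s(D)-w(D)+1+2\kappa^++2\min\{\delta^+-\kappa^+,n-1\}+2r^+(D)$.

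The hardest point will be the behaviour at the threshold $\delta^+-\kappa^+=n-1$, exactly as in Propositions \ref{P3.5} and \ref{P5.2}. Below the threshold each surviving lone crossing lowers the $e$-power by two and the leading coefficient sits in $(-1)^{\rho^-+\kappa^-+c^-(D)}F$; at and above the threshold the $\min$ saturates and the sign of $p^\ell_0(\L)$ flips to $(-1)^{1+\rho^-+\kappa^-+c^-(D)}F$. I expect the main obstacle to be showing that when two contributions share the same lowest $a$-power their leading $z$-coefficients have matching sign, so that they add rather than cancel; this is what keeps $p^\ell_0(\L)$ in the stated one-signed class across the threshold, and it is where the strict inequality $\delta^+>\rho^-+\kappa^+$ and the saturation of $\min\{\delta^+-\kappa^+,n-1\}$ must be used most carefully.
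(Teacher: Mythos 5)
Your proposal is correct and follows essentially the same route as the paper's own proof: the paper likewise rests on Proposition \ref{P5.2} as the base, uses $A$-moves to convert $\alpha_j=1$ strips together with lone crossings into parallel $2$-strips (the paper performs all $\kappa^+$ of them at once to settle the case $\sum\alpha_j=\kappa^+$ and then inducts on $\sum\alpha_j$, whereas you nest an induction on $\alpha_{\kappa^+}$ inside one on $\kappa^+$ --- an equivalent bookkeeping choice), and applies VP$^+$ to a crossing of a strip with $\alpha_{\kappa^+}\ge 2$ so that $D_-$ and $D_0$ fall under the inductive hypotheses. The degree comparisons come out exactly as you set them up ($D_0$ carries $E(\L)$, $D_-$ carries $e(\L)$, each winning by a strict gap of at least $2$), so the cancellation issue you anticipate at the threshold $\delta^+-\kappa^+=n-1$ never materializes here; one only tracks the sign class of the single contributing term, as in the paper.
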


\begin{proof} Let us first consider the case when $\alpha_j=1$ for all $j$. We shall first perform $\kappa^+$ $A$-moves as shown in Figure \ref{alpha_sigma_reduction}. Let us denote the resulting diagram by $\mathbb{D}$. Notice that $\mathbb{D}$ is now a diagram for a link in
$P_3(\mu_1,\ldots,\mu_{\rho^+-\delta^+},2,\ldots,2,1,\ldots,1;-\nu_1,\ldots,-\nu_{\rho^-}\vert 0;-2\beta_1,\ldots,-2\beta_{\kappa^-})$ (with $\kappa^+$ 2's and $\delta^+-\kappa^+$ 1's). At this point, $\mathbb{D}$ has the same cycle length as $D$ has, but it has $\kappa^+$ less lone crossings, with each lone crossing replaced by a vertical strip of 2 crossings. Also, $\mathbb{D}$ has $\delta^+-\kappa^+> \rho^-$ lone crossings, hence Proposition \ref{P5.2} applies to $\mathbb{D}$. We have $s(\mathbb{D})=s(D)-\kappa^+$, $w(\mathbb{D})=w(D)-\kappa^+$, $c(\mathbb{D})=c(D)-\kappa^+$, $c^-(\mathbb{D})=c^-(D)$, $r^-(\mathbb{D})=r^-(D)$, $r^+(\mathbb{D})=0$. Thus we have
\begin{eqnarray*}
E(\L)&=&E(\mathbb{D})=s(\mathbb{D})-w(\mathbb{D})-1-2r^-(\mathbb{D})\\
&=&s(D)-w(D)-1-2r^-(D),\\
p_0^h(\L)&=&p_0^h(\mathbb{D})\in  (-1)^{c^-(\mathbb{D})}F=(-1)^{c^-(D)}F,\\
e(\L)&=&e(\mathbb{D})=-s(\mathbb{D})-w(\mathbb{D})+1+2\min\{\delta^+-\kappa^+,n-1\}\\
&=&-s(\mathbb{D})-w(\mathbb{D})+1+2\kappa^++2\min\{\delta^+-\kappa^+,n-1\},\\
p^\ell_0(\L)=p_0^\ell(\mathbb{D})&\in&
\left\{
\begin{array}{ll}
(-1)^{\rho^-+\kappa^-+c^-(\mathbb{D})}F,&\ {\rm if}\ \delta^+-\kappa^+<n-1,\\
(-1)^{1+\rho^-+\kappa^-+c^-(\mathbb{D})}F,&\ {\rm if}\ \delta^+-\kappa^+\ge n-1,
\end{array}
\right.\\
&=&
\left\{
\begin{array}{ll}
(-1)^{\rho^-+\kappa^-+c^-(D)}F,&\ {\rm if}\ \delta^+-\kappa^+<n-1,\\
(-1)^{1+\rho^-+\kappa^-+c^-(D)}F,&\ {\rm if}\ \delta^+-\kappa^+\ge n-1.
\end{array}
\right.
\end{eqnarray*}
Thus the statement holds. Use induction on $\sum \alpha_j\ge \kappa^+$, the above has established the initial step $\sum \alpha_j= \kappa^+$. Assume that the statement holds for $\sum \alpha_j\ge \kappa^+$ up to certain values (greater than or equal to $\kappa^+$) and consider the case that it is increased by one. It is necessary that one of the $\alpha_j$'s is now greater than 1, say $\alpha_1\ge 2$. Apply VP$^+$ to a crossing in the $\alpha_1$ strip. The induction hypothesis  applies to both $D_-$ and $D_0$ with $D_0$ having one less $\alpha_j$ strips. We have
$s(D_0)=s(D)+1-2\alpha_1$, $w(D_0)=w(D)-2\alpha_1$, $r^+(D_0)=r^+(D)+1-\alpha_1$, $ r^-(D_0)=r^-(D)$, $c^-(D_0)=c^-(D)$, $s(D_-)=s(D)-2$, $w(D_-)=w(D)-2$, $r^+(D_-)=r^+(D)-1$, $c^-(D_-)=c^-(D)$. It follows that
\begin{eqnarray*}
-1+E(D_0)&=&-1+(s(D)+1-2\alpha_1)-(w(D)-2\alpha_1)-1-2r^-(D)\\
&=&s(D)-w(D)-1-2r^-(D),\\
zp_0^h(D_0)&\in &(-1)^{c^-(D_0)}F=(-1)^{c^-(D)}F,\\
-2+E(D_-)&=&-2+(s(D)-2)-(w(D)-2)-1-2r^-(D)\\
&=&s(D)-w(D)-3-2r^-(D),\\
-1+e(D_0)&=&-1-(s(D)+1-2\alpha_1)-(w(D)-2\alpha_1)\\
&+&1+2(\kappa^+-1)+2\min\{\delta^++1-\kappa^+,n-1\}+2(r^+(D)+1-\alpha_1)\\
&=&-s(D)-w(D)-1+2\alpha_1+2\kappa^++2\min\{\delta^+-(\kappa^+-1),n-1\}+2r^+(D)\\
&\ge & -s(D)-w(D)+3+2\kappa^++2\min\{\delta^+-\kappa^+,n-1\}+2r^+(D),\\
-2+e(D_-)&=&-2-(s(D)-2)-(w(D)-2)+1+2\kappa^++2\min\{\delta^+-\kappa^+,n-1\}+2(r^+(D)-1)\\
&=&-s(D)-w(D)+1+2\kappa^++2\min\{\delta^+-\kappa^+,n-1\}+2r^+(D),\\
p_0^\ell(\L)=p_0^\ell(D_-)&\in &
\left\{
\begin{array}{ll}
(-1)^{\rho^-+\kappa^-+c^-(D)}F,&\ {\rm if}\ \delta^+-\kappa^+<n-1,\\
(-1)^{1+\rho^-+\kappa^-+c^-(D)}F,&\ {\rm if}\ \delta^+-\kappa^+\ge n-1.
\end{array}
\right.
\end{eqnarray*}
The statement follows by comparison and the theorem is proved.
\end{proof}

\medskip
\begin{corollary}\label{C5.5}
The statements of Proposition \ref{P5.4} hold if $\rho^+=\delta^+=2$, $\rho^-=0$ and $\kappa^+=1$. That is, 
\begin{eqnarray}
&E(\L)=s(D)-w(D)-1-2r^-(D),&e(\L)=-s(D)-w(D)+1+2\alpha_1.
\end{eqnarray}
\end{corollary}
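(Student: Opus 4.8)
The assertion is precisely the boundary case $2n=2$ (equivalently $n=1$) of Proposition~\ref{P5.4}, which is excluded there by the standing hypothesis $2n\ge 4$. The plan is therefore to re-run the proof of Proposition~\ref{P5.4} with $\rho^+=\delta^+=2$, $\rho^-=0$, $\kappa^+=1$ and $n=1$, and to check that the single structural step which genuinely used $2n\ge 4$ can be replaced. First I would note that the governing inequality $\delta^+>\rho^-+\kappa^+$ still holds, since $2>0+1$, and that substituting $n=1$ into the formulas of Proposition~\ref{P5.4} reproduces the claimed values: the term $\min\{\delta^+-\kappa^+,n-1\}=\min\{1,0\}$ collapses to $0$, and with $r^+(D)=-\kappa^++\sum\alpha_j=\alpha_1-1$ the lower formula becomes $-s(D)-w(D)+1+2\kappa^++2r^+(D)=-s(D)-w(D)+1+2\alpha_1$, while the upper formula is unchanged. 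Since Corollary~\ref{C5.5} asserts only the degrees, I need establish only $E(\L)$ and $e(\L)$, but to drive the induction I would still carry along the extreme coefficients $p^h_0(\L),p^\ell_0(\L)\in(-1)^{c^-(D)}F$ to rule out cancellation at the top and bottom powers of $a$.

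For the base case $\sum\alpha_j=\kappa^+=1$, that is $\alpha_1=1$, I would perform the single $A$-move of Remark~\ref{additional_reduction}, absorbing one of the two lone crossings into the $2\alpha_1=2$ strip. Exactly as in Proposition~\ref{P5.4} this yields a diagram $\mathbb{D}$ of a link in $P_3(2,1;0\vert 0;-2\beta_1,\ldots,-2\beta_{\kappa^-})$, with $s(\mathbb{D})=s(D)-1$, $w(\mathbb{D})=w(D)-1$ and $r^-(\mathbb{D})=r^-(D)$. The one change from the proof of Proposition~\ref{P5.4} is that $\mathbb{D}$ now has cycle length $2$, so Proposition~\ref{P5.2} no longer applies to it; instead its parameters $\rho^-=0$, $\kappa^+=0$ place it under Corollary~\ref{C5.3}. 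Applying Corollary~\ref{C5.3} to $\mathbb{D}$ and undoing the shifts gives $E(\L)=E(\mathbb{D})=s(D)-w(D)-1-2r^-(D)$ and $e(\L)=e(\mathbb{D})=-s(D)-w(D)+3=-s(D)-w(D)+1+2\alpha_1$, which is the assertion for $\alpha_1=1$.

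For the inductive step $\alpha_1\ge 2$ I would apply VP$^+$ to a crossing in the $\alpha_1$ strip. The oriented smoothing collapses the whole antiparallel strip (the remaining crossings become removable), so $D_0\in P_3(1,1;0\vert 0;-2\beta_1,\ldots,-2\beta_{\kappa^-})$ is governed by Corollary~\ref{C5.3}, while switching the crossing cancels a bigon, so $D_-\in P_3(1,1;0\vert 2(\alpha_1-1);-2\beta_1,\ldots,-2\beta_{\kappa^-})$ is governed by the induction hypothesis of this corollary. From the skein relation (\ref{Skein1}) the top $a$-power of $H(\L)=H(D_+)$ is $-1+E(D_0)$ and the bottom is $-2+e(D_-)$; inserting the parameter shifts $s(D_0)=s(D)+1-2\alpha_1$, $w(D_0)=w(D)-2\alpha_1$, $r^-(D_0)=r^-(D)$ and $s(D_-)=s(D)-2$, $w(D_-)=w(D)-2$ reproduces exactly $E(\L)=s(D)-w(D)-1-2r^-(D)$ and $e(\L)=-s(D)-w(D)+1+2\alpha_1$, closing the induction.

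The step I expect to require the most care is the persistence of the comparisons at the degenerate cycle length. With $n=1$ the quantity $\min\{\delta^+-\kappa^+,n-1\}$ is frozen at $0$, so the slack that $2n\ge 4$ supplied in Proposition~\ref{P5.4} is gone, and I would verify explicitly that (i) the $A$-move reduction genuinely lands inside the hypotheses of Corollary~\ref{C5.3} and that $D_0$ in the inductive step does as well, and (ii) the strict domination separating $-2+e(D_-)$ from $-1+e(D_0)$ (a margin of $2$ that is independent of $n$, and hence survives at $n=1$) still forces $e(\L)=-2+e(D_-)$ rather than $-1+e(D_0)$. Tracking that $p^h_0(\L)$ and $p^\ell_0(\L)$ lie in the nonzero sets $(-1)^{c^-(D)}F$ and $(-1)^{1+\kappa^-+c^-(D)}F$ throughout then guarantees no accidental cancellation of the extreme terms.
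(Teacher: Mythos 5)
Your proposal is correct and follows essentially the same route as the paper, whose proof of Corollary \ref{C5.5} simply asserts that the argument of Proposition \ref{P5.4} goes through under the substitution $\rho^-=0$, $\rho^+=\delta^+=2$, $\kappa^+=1$; you have carried out exactly that verification, correctly identifying that the only structural change is replacing the appeals to Proposition \ref{P5.2} (unavailable since $2n=2$) by Corollary \ref{C5.3}, both in the $A$-move base case and for $D_0$ in the induction on $\alpha_1$.
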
 

\begin{proof}
One can verify that the proof of Proposition \ref{P5.4} goes through by substituting $\rho^-=0$, $\rho^+=\delta^+=2$ and $\kappa^+=1$. \end{proof}

\medskip
\begin{proposition}\label{P5.6}
Let  $\L\in P_3(\mu_1,\ldots,\mu_{\rho^+};-\nu_1,\ldots,-\nu_{\rho^-}\vert 2\alpha_1,\ldots, 2\alpha_{\kappa^+}; -2\beta_1,\ldots, -2\beta_{\kappa^-})$ with $\rho^++\rho^-\ge 4$ and $D$ be a standard diagram of $\L$ such that $\delta^+\le \rho^-+\kappa^+$.  Then we have
\begin{eqnarray}
E(\L)&=&s(D)-w(D)-1-2r^-(D),\ p_0^h(\L)\in (-1)^{c^-(D)}F,\label{genEpower}\\
e(\L)&=&-s(D)-w(D)+1+2\delta^++2r^+(D),\ p_0^\ell(\L)\in(-1)^{1+\kappa^++\kappa^-+\delta^++c^-(D)}F. \label{genepower}
\end{eqnarray}
\end{proposition}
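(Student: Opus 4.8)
The plan is to follow the template of Proposition \ref{P5.4}, adapted to the opposite inequality $\delta^+\le\rho^-+\kappa^+$. As in the earlier propositions I would first reduce to the base case $\alpha_j=1$ for all $j$ and $\beta_i=1$ for all $i$ (so that $r^+(D)=r^-(D)=0$ and the two asserted formulas collapse to $E(\L)=s(D)-w(D)-1$ and $e(\L)=-s(D)-w(D)+1+2\delta^+$), and then recover arbitrary $\alpha_j$ and $\beta_i$ by a double induction on $\sum\alpha_j\ge\kappa^+$ and $\sum\beta_i\ge\kappa^-$ via the procedures VP$^+$ and VP$^-$.

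The crux is the base case, which I would treat by performing $\min\{\delta^+,\kappa^+\}$ $A$-moves (Figure \ref{alpha_sigma_reduction}), each lowering $s$, $w$, and $c$ by one while fixing $c^-$. The hypothesis $\delta^+\le\rho^-+\kappa^+$ is exactly what guarantees that the reduced diagram $\mathbb{D}$ falls into an already-understood regime. If $\delta^+>\kappa^+$, all $\kappa^+$ $\alpha$-strips are consumed and $\mathbb{D}$ retains $\delta^+-\kappa^+\le\rho^-$ positive lone crossings, so $\mathbb{D}$ lies in the regime $\kappa^+=0$, $0<\delta^+\le\rho^-$ governed by Proposition \ref{P3.4}, augmented when $\kappa^->0$ by the $\kappa^-$ attached negative torus blocks $T_o(-2,2)$ exactly as in the attached-block computation of Proposition \ref{P5.2}. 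If instead $\delta^+\le\kappa^+$, all $\delta^+$ lone crossings are consumed and $\mathbb{D}$ has none left, landing in the regime $\delta^+=0$ that extends Proposition \ref{P3.2} by its positive and negative torus blocks. Substituting $s(\mathbb{D})=s(D)-\min\{\delta^+,\kappa^+\}$ and $w(\mathbb{D})=w(D)-\min\{\delta^+,\kappa^+\}$ into the formula read off from the relevant regime returns $E(\L)=s(D)-w(D)-1$ in both cases; the quantity $\min\{\delta^+,\kappa^+\}$ introduced by the $A$-moves cancels cleanly against the shift in $s$ and $w$, so that the surviving $e$-power is $-s(D)-w(D)+1+2\delta^+$ with the full summand $2\delta^+$ and no truncating $\min$, in contrast to Proposition \ref{P5.4}.

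The inductive steps are routine and mirror those already performed. When some $\alpha_1\ge 2$, apply VP$^+$ to a crossing in the $\alpha_1$ strip; the induction hypothesis applies to $D_0$ (with $s(D_0)=s(D)+1-2\alpha_1$, $w(D_0)=w(D)-2\alpha_1$, $r^+(D_0)=r^+(D)+1-\alpha_1$, $c^-(D_0)=c^-(D)$) and to $D_-$ (with $s$ and $w$ each reduced by $2$ and $r^+$ reduced by $1$), and a direct comparison gives $E(\L)=-1+E(D_0)$ and $e(\L)=-2+e(D_-)$ with the sign classes preserved. When some $\beta_{\kappa^-}\ge 2$, apply VP$^-$ to a crossing in the $\beta_{\kappa^-}$ strip; here $E(\L)=2+E(D_+)$ and $e(\L)=1+e(D_0)$, and the substitutions $r^-(D_0)=r^-(D)+1-\beta_{\kappa^-}$ and $c^-(D_0)=c^-(D)-2\beta_{\kappa^-}$ reproduce $E(\L)=s(D)-w(D)-1-2r^-(D)$ precisely as in Proposition \ref{P5.2}. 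I would delegate these arithmetic comparisons to the reader.

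The main obstacle is the leading-coefficient sign $(-1)^{1+\kappa^++\kappa^-+\delta^++c^-(D)}$ of $p_0^\ell(\L)$ in the base case, because the two $A$-move sub-cases route the computation through different propositions whose exponents do not match the target on the nose: Proposition \ref{P3.4} delivers $(-1)^{1+(\delta^+-\kappa^+)+\kappa^-+c^-(D)}$ and the $\delta^+=0$ extension of Proposition \ref{P3.2} delivers $(-1)^{1+(\kappa^+-\delta^+)+\kappa^-+c^-(D)}$, each differing from $(-1)^{1+\kappa^++\kappa^-+\delta^++c^-(D)}$ by an even quantity ($2\kappa^+$ and $2\delta^+$ respectively) and hence giving the same sign class. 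Confirming this agreement, and establishing the two foundational sub-results on which it rests---the $\delta^+=0$ extension of Proposition \ref{P3.2} and the $\kappa^+=0$, $0<\delta^+\le\rho^-$ extension of Proposition \ref{P3.4} with negative torus blocks---is the delicate part. I would carry out the sign bookkeeping by isolating the main cycle of Seifert circles from its attached blocks and applying Remarks \ref{elementary_remark} and \ref{toruslink_formula} block by block, as in the proof of Proposition \ref{P3.5}, verifying that the surviving sign collects one factor of $-1$ from each $A$-move, one from each negative block $T_o(-2,2)$, and one from each absorbed lone crossing, together with the parity $c^-(D)$, and that the induction then propagates this exponent unchanged.
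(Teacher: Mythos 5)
Your proposal runs into two genuine gaps, both at places where the paper's own proof has to do something you did not anticipate. The first is the boundary case $\delta^+=\rho^-+\kappa^+$. In your inductive step you apply VP$^+$ to a crossing in an $\alpha$ strip and assert that the induction hypothesis applies to $D_0$; but smoothing a crossing in an antiparallel strip removes that strip entirely, so $D_0$ has only $\kappa^+-1$ $\alpha$ strips, and when $\delta^+=\rho^-+\kappa^+$ it satisfies $\delta^+>\rho^-+(\kappa^+-1)$, i.e.\ it lies outside the hypothesis of the proposition. Its $e$-power is then \emph{not} $-s(D_0)-w(D_0)+1+2\delta^++2r^+(D_0)$ but the truncated formula of Proposition \ref{P5.4}, carrying $\min\{\delta^+-(\kappa^+-1),n-1\}$ in place of $\delta^+$. (Concretely: for $\L\in P_3(\mu_1,1,1;-\nu_1\vert 4;0)$ one has $\delta^+=2=\rho^-+\kappa^+$, and $D_0\in P_3(\mu_1,1,1;-\nu_1\vert 0;0)$ has $\delta^+=2>\rho^-=1$, so Proposition \ref{P3.5}, not \ref{P3.4}, governs it.) The comparison showing that $D_-$ still supplies the lowest $a$-power is therefore not routine: the paper needs Proposition \ref{P5.4} applied to $D_0$ together with the inequality $\kappa^++\min\{\delta^++1-\kappa^+,n-1\}\ge\delta^+$ (which rests on $\rho^-\le n-1$, itself a consequence of $\rho^+\ge\delta^+=\rho^-+\kappa^+$), and it devotes a separate sub-induction (an $A$-move for $\alpha_{\kappa^+}=1$, then VP$^+$ with $D_0$ handled by Proposition \ref{P5.4}) to exactly this case. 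Your plan never invokes Proposition \ref{P5.4} at all.

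The second gap is in your base case. After performing the $A$-moves you land (when $\delta^+>\kappa^+$ and $\kappa^->0$) in the regime $\kappa^+=0$, $0<\delta^+-\kappa^+\le\rho^-$ with $\kappa^-$ attached negative blocks; this is covered by neither Proposition \ref{P3.4} (which requires $\kappa^-=0$) nor Proposition \ref{P5.2} (which requires $\delta^+>\rho^-$). That last hypothesis is not incidental: it is precisely what lets the proof of Proposition \ref{P5.2} absorb every $\nu$ strip by an N-move and reduce to an \emph{alternating} Type B diagram, where Remark \ref{elementary_remark} and the results of \cite{Diao2021} apply. In your regime there are too few lone crossings to do this, the diagram stays non-alternating, and it is not a connected sum, so neither Remark \ref{elementary_remark} nor Remark \ref{toruslink_formula} can be applied ``block by block'' as you suggest; that is exactly why the paper's Proposition \ref{P3.4} (the $\kappa^-=0$ analogue) already required a long skein induction rather than an N-move reduction. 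Your two ``foundational sub-results'' are thus instances of Proposition \ref{P5.6} itself, and deferring to them makes the argument circular. The paper sidesteps this by inducting on $\kappa=\kappa^++\kappa^-$ starting from Proposition \ref{P3.4} and peeling off one attached block at a time with VP$^{\pm}$, so that the auxiliary diagrams $D_{\pm}$ which must be evaluated from scratch genuinely are connected sums of torus links, where Remark \ref{toruslink_formula} legitimately applies.
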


\begin{proof}
We shall use induction on $\kappa=\kappa^++\kappa^-$. The statement holds for $\kappa=0$ by Proposition \ref{P3.4}. 
Assume that the statement holds for some $\kappa\ge 0$, we need to consider the case when it is increased by one. Consider first the case that $\kappa^-$ is increased by one, that is, $\kappa^-\ge 1$ and the statement holds for $\kappa^--1$ and the current $\kappa^+$, then we start with $\beta_{\kappa^-}=1$ and apply $VP^-$ on a crossing in the $\beta_{\kappa^-}$ strip. The induction hypothesis applies to $D_0$ since the condition $\delta^+\le \rho^-+\kappa^+$ still holds, and $D_+$ is the connected sum of the torus links $T_p(\mu_j,2)$, $1\le j\le \rho^+-\delta^+$, $T_p(-\nu_j,2)$, $1\le j\le \rho^-$, $T_o(2\alpha_j,2)$, $1\le j\le \kappa^+$, and $T_o(-2\beta_i,2)$, $1\le j\le \kappa^--1$. We have $s(D_0)=s(D)-1$, $w(D_0)=w(D)+2$, $c^-(D_0)=c^-(D)-2$, $r^+(D_0)=r^+(D)$, $r^-(D_0)=r^-(D)$, $s(D_+)=s(D)-\delta^+$, $w(D_+)=w(D)+2-\delta^+$, $r^+(D_+)=r^+(D)$ and $r^-(D_+)=r^-(D)$. 
It follows that (by using (\ref{TorusConnect3}) and  (\ref{TorusConnect4})  in  Remark \ref{toruslink_formula} for $D_+$)
\begin{eqnarray*}
1+E(D_0)&=&1+s(D_0)-w(D_0)-1-2r^-(D_0) = s(D)-w(D)-3-2r^-(D),\\
1+e(D_0)&=&1-s(D_0)-w(D_0)+1+2\delta^++2r^+(D_0)\\
& =& -s(D)-w(D)+1+2\delta^++2r^+(D),\\
-zp_0^\ell(D_0)&\in& (-1)(-1)^{1+\kappa^++(\kappa^--1)+\delta^++c^-(D)+2}F\\
&=&(-1)^{1+\kappa^++\kappa^-+\delta^++c^-(D)}F,\\
2+E(D_+)&=&2+(s(D)-\delta^+)-(w(D)+2-\delta^+)-1-2r^-(D)\\
&=&s(D)-w(D)-1-2r^-(D),\\
p_0^h(D_+)&\in &(-1)^{\sum \nu_i}F=(-1)^{c^-(D)}F,\\
2+e(D_+)&=&2-(s(D)-\delta^+)-(w(D)+2-\delta^+)+1+2r^+(D)\\
&=&-s(D)-w(D)+1+2\delta^++2r^+(D),\\
p_0^\ell(D_+)&\in &(-1)^{\rho^+-\delta^++\rho^-+\sum \nu_i+\kappa^++\kappa^--1}F\\
&=&(-1)^{1+\kappa^++\kappa^-+\delta^++c^-(D)}F.
\end{eqnarray*}
We see that the statement of the proposition holds by comparison of the above results. Assume now that the statement holds for $\beta_{\kappa^-}-1\ge 1$, then for $\beta_{\kappa^-}$, again apply $VP^-$ on a crossing in the $\beta_{\kappa^-}$ strip. The induction hypothesis applies to both $D_0$ and $D_+$. We have $s(D_0)=s(D)+1-2\beta_{\kappa^-}$, $w(D_0)=w(D)+2\beta_{\kappa^-}$, $c^-(D_0)=c^-(D)+2\beta_{\kappa^-}$, $r^+(D_0)=r^+(D)$, $r^-(D_0)=r^-(D)+1-\beta_{\kappa^-}$, $s(D_+)=s(D)-2$, $w(D_+)=w(D)+2$, $c^-(D_+)=c^-(D)+2\beta_{\kappa^-}$, $r^+(D_+)=r^+(D)$ and $r^-(D_-)=r^-(D)-1$. It follows that
\begin{eqnarray*}
1+E(D_0)&=&1+s(D_0)-w(D_0)-1-2r^-(D_0) = s(D)-w(D)-1-2\beta_{\kappa^-}-2r^-(D),\\
2+E(D_+)&=&2+(s(D)-2)-(w(D)+2)-1-2(r^-(D)-1)=s(D)-w(D)-1-2r^-(D),\\
p_0^h(D_+)&\in& (-1)^{c^-(D)+2\beta_{\kappa^-}}F=(-1)^{c^-(D)}F,\\
1+e(D_0)&=&1-s(D_0)-w(D_0)+1+2\delta^++2r^+(D_0)\\
& =& -s(D)-w(D)+1+2\delta^++2r^+(D),\\
-zp_0^\ell(D_0)&\in & (-1)(-1)^{1+\kappa^++(\kappa^--1)+\delta^++c^-(D)+2\beta_{\kappa^-}}F\\
&=&(-1)^{1+\kappa^++\kappa^-+\delta^++c^-(D)}F,\\
2+e(D_-)&=&2-(s(D)-2)-(w(D)+2)+1+2\delta^++2r^+(D)\\
&=&-s(D)-w(D)+3+2\delta^++2r^+(D).
\end{eqnarray*}
Comparison now shows that the statement of the theorem still holds.

\medskip
Let us now consider the case when $\kappa^+$ is increased by one. That is, $\kappa^+\ge 1$ and the statement holds for $\kappa^+-1$ and $\kappa^-$. Notice that it means the statement holds under the condition $\delta^+\le \rho^-+\kappa^+-1$. If $\delta^+\le \rho^-+\kappa^+-1$, then we start with $\alpha_{\kappa^+}=1$ and apply $VP^+$ on a crossing in the $\alpha_{\kappa^+}$ strip. The induction hypothesis applies to $D_0$. We have $s(D_0)=s(D)-1$, $w(D_0)=w(D)-2$, $c(D_0)=c(D)-2$, $c^-(D_0)=c^-(D)$, $r^+(D_0)=r^+(D)$ and $r^-(D_0)=r^-(D)$. It follows that
\begin{eqnarray*}
-1+E(D_0)&=&-1+s(D_0)-w(D_0)-1-2r^-(D_0) = s(D)-w(D)-1-2r^-(D),\\
z p_0^h(D_0)&\in &
(-1)^{c^-(D_0)}F=(-1)^{c^-(D)}F,\\
-1+e(D_0)&=&-1-s(D_0)-w(D_0)+1+2\delta^++2r^+(D_0)\\
& =& -s(D)-w(D)+3+2\delta^++2r^+(D).
\end{eqnarray*}
On the other hand,
$D_-$ is the connected sum of the torus links $T_p(\mu_j,2)$, $1\le j\le \rho^+-\delta^+$, $T_p(-\nu_j,2)$, $1\le j\le \rho^-$, $T_o(2\alpha_j,2)$, $1\le j\le \kappa^+-1$, and $T_o(-2\beta_i,2)$, $1\le j\le \kappa^-$. We have $s(D_-)=s(D)-\delta^+$, $w(D_-)=w(D)-2-\delta^+$, $r^+(D_-)=r^+(D)$ and $r^-(D_-)=r^-(D)$. Direct calculations using (\ref{TorusConnect3}) and  (\ref{TorusConnect4}) in Remark \ref{toruslink_formula} then lead to
\begin{eqnarray*}
-2+E(D_-)&=&-2+(s(D)-\delta^+)-(w(D)-2-\delta^+)-1-2r^-(D)\\
&=&s(D)-w(D)-1-2r^-(D),\\
p_0^h(D_-)&\in &(-1)^{\sum \nu_i}F=(-1)^{c^-(D)}F,\\
-2+e(D_-)&=&-2-(s(D)-\delta^+)-(w(D)-2-\delta^+)+1+2r^+(D)\\
&=&-s(D)-w(D)+1+2\delta^++2r^+(D),\\
p_0^\ell(D_-)&\in &(-1)^{\rho^+-\delta^++\rho^-+\sum \nu_i+(\kappa^+-1)+\kappa^-}F\\
&=&(-1)^{1+\kappa^++\kappa^-+\delta^++c^-(D)}F.
\end{eqnarray*}
We see that the statement of the theorem holds by comparison of the above results. Assume now that the statement holds for $\alpha_{\kappa^+}-1\ge 1$, then for $\alpha_{\kappa^+}$, again apply $VP^+$ on a crossing in the $\alpha_{\kappa^+}$ strip. The induction hypothesis applies to both $D_0$ and $D_-$. We have $s(D_0)=s(D)+1-2\alpha_{\kappa^+}$, $w(D_0)=w(D)-2\alpha_{\kappa^+}$, $c^-(D_0)=c^-(D)$, $r^+(D_0)=r^+(D)+1-\alpha_{\kappa^+}$ and $r^-(D_0)=r^-(D)$. It follows that
\begin{eqnarray*}
-1+E(D_0)&=&-1+s(D_0)-w(D_0)-1-2r^-(D_0) = s(D)-w(D)-1-2r^-(D),\\
z p_0^h(D_0)&\in &
(-1)^{c^-(D_0)}F=(-1)^{c^-(D)}F,\\
-1+e(D_0)&=&-1-s(D_0)-w(D_0)+1+2\delta^++2r^+(D_0)\\
& =& -s(D)-w(D)+1+2\alpha_{\kappa^+}+2\delta^++2r^+(D).
\end{eqnarray*}
On the other hand, we have $s(D_-)=s(D)-2$, $w(D_-)=w(D)-2$, $c^-(D_-)=c^-(D)$, $r^+(D_-)=r^+(D)-1$ and $r^-(D_-)=r^-(D)$. It follows that
\begin{eqnarray*}
-2+E(D_-)&=&-2+(s(D)-2)-(w(D)-2)-1-2r^-(D)=s(D)-w(D)-3-2r^-(D),\\
-2+e(D_-)&=&-2-(s(D)-2)-(w(D)-2)+1+2\delta^++2(r^+(D)-1)=\\
&=&-s(D)-w(D)+1+2\delta^++2r^+(D),\\
p_0^\ell(D_-)&=&(-1)^{1+\kappa^++\kappa^-+\delta^++c^-(D)}F.
\end{eqnarray*}
Comparison now shows that the statement of the theorem still holds. 

\medskip
We still need to consider the case $\delta^+=\rho^-+\kappa^+$, since in this case the induction hypothesis does not apply to $D_0$ in the above argument. Here we shall use induction on $\alpha_{\kappa^+}$, starting with $\alpha_{\kappa^+}=1$. We first apply an $A$-move to $D$ and observe that now the induction hypothesis applies to the resulting diagram $\mathbb{D}$, since $\mathbb{D}$ now has $\delta^+-1$ lone crossings in its main cycle and has $\kappa^+-1$ $\alpha_j$ strips. We have $s(\mathbb{D})=s(D)-1$, $w(\mathbb{D})=w(D)-1$, $c^-(\mathbb{D})=c^-(D)$, $r^-(\mathbb{D})=r^-(D)$ and $r^+(\mathbb{D})=r^+(D)$, it follows that
\begin{eqnarray*}
E(\L)&=&E(\mathbb{D})=s(\mathbb{D})-w(\mathbb{D})-1-2r^-(\mathbb{D})\\
&=&s(D)-w(D)-1-2r^-(D),\\
p^h_0(\L)&=&p^h_0(\mathbb{D})\in (-1)^{c^-(\mathbb{D})}F=(-1)^{c^-(D)}F,\\
e(\L)&=&e(\mathbb{D})=-s(\mathbb{D})-w(\mathbb{D})+1+2(\delta^+-1)+2r^+(\mathbb{D})\\
&=&-s(D)-w(D)+1+2\delta^++2r^+(D),\\
p_0^\ell(\L)&=&p_0^\ell(\mathbb{D})\in (-1)^{1+(\kappa^+-1)+\kappa^-+(\delta^+-1)+c^-(\mathbb{D})}F\\
&=& (-1)^{1+\kappa^++\kappa^-+\delta^++c^-(D)}F.
\end{eqnarray*}
This proves the case $\alpha_{\kappa^+}=1$. Assume now that the statement is true for some $\alpha_{\kappa^+}$ value ($\ge 1$) and $\alpha_{\kappa^+}$ is increased by one, that is, the statement holds for $\alpha_{\kappa^+}-1$. Apply VP$^+$ to a crossing in the $\alpha_{\kappa^+}$ strip. The induction hypothesis applies to $D_-$ and Proposition \ref{P5.4} applies to $D_0$ since it has $\delta^+$ lone crossings in its main cycle and only $\kappa^+-1$ $\alpha_j$ strips (thus $\delta^+=\rho^-+\kappa^+>\rho^-+(\kappa^+-1)$). We have $s(D_0)=s(D)+1-2\alpha_{\kappa^+}$, $w(D_0)=w(D)-2\alpha_{\kappa^+}$, $c^-(D_0)=c^-(D)$, $r^-(D_0)=r^-(D)$, $r^+(D_0)=r^+(D)+1-\alpha_{\kappa^+}$, $s(D_-)=s(D)-2$, $w(D_-)=w(D)-2$, $r^-(D_-)=r^-(D)$, $r^+(D_-)=r^+(D)-1$, and $-1+2\alpha_{\kappa^+}\ge 3$ since $\alpha_{\kappa^+}\ge 2$. Also, since $\rho^+\ge \delta^+=\rho^-+\kappa^+$, we have $2n\ge 2\rho^-+\kappa^+$. It follows that $\rho^-\le n-1$ hence
$$
\kappa^++\min\{\delta^++1-\kappa^+,n-1\}=\kappa^++\min\{\rho^-+1,n-1\}\ge \kappa^++\rho^-=\delta^+.
$$
Thus we have
\begin{eqnarray*}
-1+E(D_0)&=&-1+s(D_0)-w(D_0)-1-2r^-(D_0) = s(D)-w(D)-1-2r^-(D),\\
z p_0^h(D_0)&\in &
(-1)^{c^-(D_0)}F=(-1)^{c^-(D)}F,\\
-1+e(D_0)&=&-1-s(D_0)-w(D_0)+1+2(\kappa^+-1)+2\min\{\delta^+-(\kappa^+-1),n-1\}+2r^+(D_0)\\
& =& -s(D)-w(D)-1+2\alpha_{\kappa^+}+2\kappa^++2\min\{\delta^++1-\kappa^+,n-1\}+2r^+(D)\\
&\ge & -s(D)-w(D)+3+2\delta^++2r^+(D),\\
-2+E(D_-)&=&-2+(s(D)-3)-(w(D)-3)-1-2r^-(D)=s(D)-w(D)-3-2r^-(D),\\
-2+e(D_-)&=&-2-(s(D)-2)-(w(D)-2)+1+2\delta^++2(r^+(D)-1)=\\
&=&-s(D)-w(D)+1+2\delta^++2r^+(D),\\
p_0^\ell(D_-)&\in&(-1)^{1+\kappa^++\kappa^-+\delta^++c^-(D)}F.
\end{eqnarray*}
Comparison now shows that the statement of the theorem still holds. 
\end{proof}

\medskip
\begin{corollary}\label{C5.7}
The statements of Proposition \ref{P5.6} hold if $\rho^+=2$, $\rho^-=0$ and $\delta^+\le \kappa^+$. That is, 
\begin{eqnarray}
&E(\L)=s(D)-w(D)-1-2r^-(D),&e(\L)=-s(D)-w(D)+1+2\delta^++2r^+(D).
\end{eqnarray}
\end{corollary}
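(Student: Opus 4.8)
The plan is to follow the template already used for Corollaries \ref{C5.3} and \ref{C5.5}: rather than reprove anything from scratch, I would verify that the inductive argument of Proposition \ref{P5.6} specializes intact to the boundary case $\rho^+=2$, $\rho^-=0$ (so $2n=2$ and $n=1$), in which the hypothesis $\delta^+\le\rho^-+\kappa^+$ becomes $\delta^+\le\kappa^+$. Two structural simplifications make this plausible. Since $\rho^-=0$ there are no negative strips in the first part, so the N-moves, which required $\rho^->0$, are never invoked. Moreover every negative crossing now comes from a $\beta$-strip, so $c^-(D)=2\sum\beta_i$ is even and all of the $(-1)^{c^-(D)}$ factors drop out, which is consistent with the $p^h_0(\L)\in F$ refinement and with the specialized sign class of $p^\ell_0(\L)$ that one expects here. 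Since $\delta^+\le\rho^+=2$, the parameter $\delta^+$ ranges only over $\{0,1,2\}$, and in particular the boundary equality $\delta^+=\kappa^+$ can occur only when $\kappa^+\le 2$.

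For the base case of the induction on $\kappa=\kappa^++\kappa^-$, I would note that $\kappa=0$ together with $\delta^+\le\kappa^+=0$ forces $\delta^+=0$, so $\L\in P_3(\mu_1,\mu_2;0\vert 0;0)$ is alternating and Proposition \ref{P3.1} applies directly with $n=1$; because $\min\{n-1,\delta^+\}=0$, its formulas collapse to exactly $E(\L)=s(D)-w(D)-1$ and $e(\L)=-s(D)-w(D)+1$, which is the claim with $r^+(D)=r^-(D)=\delta^+=0$. This is the one place where the general proof's appeal to Proposition \ref{P3.4} must be replaced, since the latter assumed $\rho^->0$. I would then run the two induction loops verbatim: increasing $\kappa^-$ by applying VP$^-$ to a crossing in the $\beta_{\kappa^-}$ strip, and increasing $\kappa^+$ by applying VP$^+$ to a crossing in the $\alpha_{\kappa^+}$ strip, in each case checking that the bookkeeping for $s(D)$, $w(D)$, $c^-(D)$ and $r^\pm(D)$ across $D_+$, $D_0$, $D_-$ is identical to that in Proposition \ref{P5.6} and that the degree comparisons together with the $F$-membership of $p^h_0$ and $p^\ell_0$ survive. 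Because $D_{\pm}$ now reduce to connected sums built from only the two torus links $T_p(\mu_1,2)$, $T_p(\mu_2,2)$ and the $T_o$ pieces, these comparisons follow from the formulas in Remark \ref{toruslink_formula}, and I expect them to go through mechanically.

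The step I expect to be the genuine obstacle is the boundary sub-case $\delta^+=\kappa^+$, the analogue of the final sub-case $\delta^+=\rho^-+\kappa^+$ of Proposition \ref{P5.6}. Here the induction hypothesis fails on $D_0$, since after removing one $\alpha$-strip the main cycle still carries $\delta^+$ lone crossings but only $\kappa^+-1$ strips, so $\delta^+>\rho^-+(\kappa^+-1)$; the argument must instead perform an $A$-move (Remark \ref{additional_reduction}) for the initial step $\alpha_{\kappa^+}=1$ and invoke Proposition \ref{P5.4} on $D_0$ for the inductive step $\alpha_{\kappa^+}\ge 2$. The delicate point is that the needed instance of Proposition \ref{P5.4} must already be available in the $\rho^+=2$, $\rho^-=0$ specialization: when $\delta^+=\kappa^+=2$ the diagram $D_0$ lies in the situation $\rho^+=\delta^+=2$, $\kappa^+=1$ recorded in Corollary \ref{C5.5}, while when $\delta^+=\kappa^+=1$ the diagram $D_0$ has $\kappa^+=0$ and is instead governed by Corollary \ref{C5.3}. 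I would therefore concentrate the effort on confirming that these previously established specializations apply to the relevant $D_0$, and that the $e$-power contributed by $D_0$ strictly exceeds that of $D_-$, so that $p^\ell_0(\L)$ is supplied by $D_-$ in the asserted sign class. Checking this compatibility, rather than any new computation, is where the real work lies.
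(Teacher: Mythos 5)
Your proposal is correct and takes essentially the same route as the paper: the paper's proof of Corollary \ref{C5.7} simply states that the argument of Proposition \ref{P5.6} goes through after substituting $\rho^-=0$, the only noted exception being that the base step $\kappa^++\kappa^-=0$ is supplied by Proposition \ref{P3.1} rather than Proposition \ref{P3.4}, exactly as you say. Your further observation about the boundary sub-case $\delta^+=\kappa^+$ --- that the appeal to Proposition \ref{P5.4} (which requires $\rho^++\rho^-\ge 4$) must be rerouted through Corollaries \ref{C5.5} and \ref{C5.3} in this specialization --- addresses a point the paper's one-line proof glosses over, and your resolution of it is correct.
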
 

\begin{proof}
One can verify that the proof of Proposition \ref{P5.6} goes through by substituting $\rho^-=0$. The only exception is that at the initial induction step $\kappa=\kappa^++\kappa^-=0$, the statement of the proposition is guaranteed by Proposition \ref{P3.1} instead of Proposition \ref{P3.4}.
\end{proof}

\medskip
\begin{theorem}\label{Type3LowboundTheorem}
The formulas given in Theorems \ref{MT1} and \ref{MT2} are lower bounds of the braid indices of the corresponding pretzel links.
\end{theorem}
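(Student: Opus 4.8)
The plan is to derive every lower bound directly from the Morton--Frank--Williams inequality, which gives $\b(\L)\ge \b_0(\L)=(E(\L)-e(\L))/2+1$. Since Propositions \ref{P3.1}--\ref{P5.6} together with Corollaries \ref{C5.3}, \ref{C5.5}, \ref{C5.7} already determine $E(\L)$ and $e(\L)$ for a standard diagram $D$ of every Type 3 pretzel link, and since in each case they certify that the leading coefficients $p_0^h(\L)$ and $p_0^\ell(\L)$ are nonzero (so that $E(\L),e(\L)$ really are the extreme $a$-degrees and no cancellation occurs), the whole task reduces to substituting those values into $\b_0(\L)$ and matching the output with the quantity asserted in Theorems \ref{MT1} and \ref{MT2} --- reading off the smaller of the two listed integers in the exceptional family (\ref{MT1e4}). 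I would first record, once and for all, the diagram data of a standard $D$ in terms of the parameters, namely $w(D)=\sum\mu_j-\sum\nu_i+2\sum\alpha_j-2\sum\beta_i$ and $s(D)=(\rho^++\rho^-)-\kappa^+-\kappa^-+2\sum\alpha_j+2\sum\beta_i$, together with $r^+(D)=-\kappa^++\sum\alpha_j$ and $r^-(D)=-\kappa^-+\sum\beta_i$ from Remark \ref{toruslink_formula}.

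The key simplification is uniform. Whenever a proposition returns $E(\L)=s(D)-w(D)-1-2r^-(D)$ and $e(\L)=-s(D)-w(D)+1+2X+2r^+(D)$ for a case-dependent quantity $X$, the bulky terms $s(D)$ and $w(D)$ cancel in $\b_0(\L)$, leaving
\[
\b_0(\L)=s(D)-r^+(D)-r^-(D)-X=(\rho^++\rho^-)+\sum\alpha_j+\sum\beta_i-X.
\]
Inserting $X=\min\{\delta^+,n-1\}$ from Proposition \ref{P5.2}, $X=\kappa^++\min\{\delta^+-\kappa^+,n-1\}$ from Proposition \ref{P5.4}, and $X=\delta^+$ from Proposition \ref{P5.6} (with $2n=\rho^++\rho^-$) reproduces exactly (\ref{MT2e1}) and (\ref{MT2e2}); the mirror cases follow from Remark \ref{remark2.2}(ii). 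The same displayed identity, specialized through Corollaries \ref{C5.3}, \ref{C5.5}, \ref{C5.7}, disposes of every subcase of (\ref{MT1e5})--(\ref{MT1e7}) with $\rho^+=2,\rho^-=0$ or $\rho^+=0,\rho^-=2$, and the summary formula (\ref{Basic_e3}) covers the basic links with $\kappa^+=\kappa^-=0$.

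For the remaining $\rho^+=\rho^-=1$ entries of Theorem \ref{MT1}, Propositions \ref{P3.3}, \ref{P4.1}, \ref{P4.2}, \ref{P4.4}, \ref{P4.5} express $E(\L)$ and $e(\L)$ directly in the parameters, so $\b_0(\L)$ is a one-line computation in each case: for instance Proposition \ref{P3.3} gives $\b_0(\L)=1$ or $2$ as in (\ref{MT1e0}), Proposition \ref{P4.2} gives $\b_0(\L)=2+\sum\alpha_j+\sum\beta_i$ matching (\ref{MT1e7}), and the two branches of $E(\L)$ in Proposition \ref{P4.1} (according to whether $\alpha_{\kappa^+}=1$) yield the values $2+\sum\alpha_j$ and $1+\sum\alpha_j$ of (\ref{MT1e3}) and (\ref{MT1e2}). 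In the exceptional family (\ref{MT1e4}) Proposition \ref{P4.1} produces $\b_0(\L)=1+\sum\alpha_j$, only the lower of the two consecutive integers, so the MFW inequality delivers the asserted lower bound and nothing more, the exact value being settled (or conjectured) by the constructions of Section \ref{Upper_Sec}. The genuinely degenerate links --- those with $\mu_1=1,\nu_1=2$, or $\rho^+=\rho^-=\delta^+=1$ --- are first reduced, via Remark \ref{R4.3} and the torus-link formulas of Remark \ref{toruslink_formula}, to torus links or to the Type 2 links treated in \cite{Diao2024}, whose degrees are already known.

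I expect the main obstacle to be organizational rather than analytic: one must verify that the long list of mutually exclusive hypotheses in (\ref{MT1e0})--(\ref{MT1e7}) and (\ref{MT2e1})--(\ref{MT2e2}) partitions all Type 3 pretzel links \emph{exactly} into the domains of the propositions above, with no case omitted and no two overlapping, and that every boundary reduction (a lone crossing absorbed by an $A$-move, an $N$-move, or a collapse to a torus link) lands in the correct branch. Although the parity and sign data $(-1)^{(\cdots)}$ of $p_0^h,p_0^\ell$ are irrelevant to the lower bound itself, keeping them aligned through these reductions is what guarantees that the $E,e$ quoted above are indeed the inputs to $\b_0(\L)$, so this bookkeeping cannot be skipped.
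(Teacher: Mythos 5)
Your proposal is correct and follows essentially the same route as the paper: the paper's proof of Theorem \ref{Type3LowboundTheorem} likewise invokes the MFW inequality, cites Propositions \ref{P3.1}--\ref{P5.6} and Corollaries \ref{C5.3}, \ref{C5.5}, \ref{C5.7} for $E(\L)$ and $e(\L)$, and reduces the matter to case-by-case matching of $\b_0(\L)=(E(\L)-e(\L))/2+1$ against the formulas of Theorems \ref{MT1} and \ref{MT2} (recorded there as a proposition-to-formula table, with the arithmetic left to the reader). Your uniform cancellation identity $\b_0(\L)=(\rho^++\rho^-)+\sum\alpha_j+\sum\beta_i-X$ is exactly the computation the paper omits, and your matchings check out, up to minor bookkeeping (e.g., Corollary \ref{C5.3} is the paper's reference for the $\rho^+=0,\rho^-=2$ subcase of (\ref{MT1e3}), and Proposition \ref{P4.4} is also needed for the third and fourth subcases of (\ref{MT1e4})).
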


\begin{proof}
The results in Sections \ref{Type3basic_case}, \ref{Type3S2} and \ref{Type3S3} allow us to compute $\b_0(\L)=(E(\L)-e(\L))/2+1$ for any Type 3 pretzel link $\L$, since every Type 3 pretzel link has been discussed in at least one of the propositions in these sections. Thus one just needs to verify in each case that $\b_0(\L)$ matches with the expression on the right side of the corresponding formula in Theorems \ref{MT1} and \ref{MT2}. We list the formulas and the corresponding results needed for their proofs and leave the calculations of $\b_0(\L)$ to the reader. \end{proof}

\medskip
\begin{supertabular}{ll}
Formula (\ref{MT1e0}):& Proposition \ref{P3.3}\\
Formula (\ref{MT1e1}):& Proposition \ref{P4.4}, Remark \ref{R4.3}/\cite[Theorem 1.4, (7)]{Diao2024}\\
Formula (\ref{MT1e2}):& Propositions \ref{P4.1}, \ref{P4.4}, Remark \ref{R4.3}/\cite[Theorem 1.4, (8)]{Diao2024} \\
Formula (\ref{MT1e3}):& Propositions \ref{P4.1}, \ref{P4.2}, \ref{P4.5}, Corollary \ref{C5.3}\\
Formula (\ref{MT1e4}):& Propositions \ref{P4.1}, \ref{P4.4}\\
Formula (\ref{MT1e5}):& Corollary \ref{C5.7}\\
Formula (\ref{MT1e6}):& Remark \ref{R4.3}/\cite[Theorem 1.4, (8), (9)]{Diao2024}, Proposition \ref{P4.5}, 
Corollaries \ref{C5.5}, \ref{C5.7}\\
Formula (\ref{MT1e7}):& Proposition \ref{P4.2}, Corollary \ref{C5.7}\\
Formula (\ref{MT2e1}):& Propositions \ref{P5.4} \\
Formula (\ref{MT2e2}):& Proposition \ref{P5.6}\\
\end{supertabular}

\medskip
\section{The determination of braid index upper bounds}\label{Upper_Sec}

\begin{theorem}\label{Type3UpboundTheorem}
With the exception of formula (\ref{MT1e4}), the formulas given in Theorems \ref{MT1} and \ref{MT2} are upper bounds of the braid indices of the corresponding pretzel links, that is, we have $\b(\L)\le \b_0(\L)$ for each $\L$ covered in formulas (\ref{MT1e1}) to (\ref{MT1e3}), and (\ref{MT1e5}) to (\ref{MT2e2}). In the case of formula (\ref{MT1e4}), we have $\b(\L)\le \b_0(\L)+1$.
\end{theorem}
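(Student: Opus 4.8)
The plan is to lean on Yamada's theorem: since $\b(\L)$ is bounded above by the number of Seifert circles $s(D')$ of \emph{any} diagram $D'$ of $\L$, it suffices to exhibit for each $\L$ a diagram realizing $s(D')=\b_0(\L)$ (respectively $s(D')=\b_0(\L)+1$ for the exceptional family of (\ref{MT1e4})). By Theorem \ref{Type3LowboundTheorem} the quantity $\b_0(\L)=(E(\L)-e(\L))/2+1$ has already been identified with the right-hand sides of all the formulas, so nothing more about the HOMFLY-PT polynomial is needed; the entire task reduces to construction and to bookkeeping of Seifert circles. The cases in (\ref{MT1e0}), which the theorem does not list, are immediate since there $\L$ is the unknot, a two-component trivial link, or a torus link $T_p(\mu_1-\nu_1,2)$, all with obvious small braid indices.

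The construction toolkit is exactly the set of circle-reducing moves assembled in Section \ref{bound_prep}: MP moves on lone crossings (Remark \ref{MP_move}), which on a chain of $k$ lone crossings delete $\lfloor k/2\rfloor$ circles; N-moves (Figure \ref{Type3fig}), each trading a tangle sum for a rational tangle and deleting one circle; A-moves (Remark \ref{additional_reduction}, Figure \ref{alpha_sigma_reduction}), each absorbing a lone crossing into an $\alpha_j$ strip at the cost of one circle; together with the more elaborate constructions imported from \cite{Diao2024}, which are genuinely necessary whenever the gap between $\L$ and its alternating counterpart grows without bound, as Example \ref{Ex1}(vi) exhibits. The starting count is fixed by the standard diagram $D$: its main cycle supplies $2n$ circles (confirmed in the alternating basic case by Proposition \ref{P3.1} through $E=s(D)-w(D)-1$), while each antiparallel block $2\alpha_j$ begins as $2\alpha_j-1$ extra circles that collapse to an $\alpha_j$-contribution after its $\alpha_j-1$ internal MP moves, and similarly for the $2\beta_i$ blocks.

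I would then organize the verification along the same case division used for the lower bound, as recorded in the table following Theorem \ref{Type3LowboundTheorem}. For the two-strip families (\ref{MT1e1})--(\ref{MT1e3}) and (\ref{MT1e5})--(\ref{MT1e7}), $\L$ reduces after one or two N-, A- or MP-moves to a connected sum of torus links or to a Type B / Type M2 alternating diagram whose minimal Seifert-circle count is already pinned down in \cite{Diao2021,Diao2024}, and one simply reads off $s(D')$. For the multi-strip families (\ref{MT2e1}) and (\ref{MT2e2}) I would first perform the $\kappa^+$ A-moves available (in the $\delta^+>\rho^-+\kappa^+$ regime of (\ref{MT2e1})) to consume lone crossings, then carry out the maximal run of MP moves around the main cycle, capped at $n-1$ because the cycle cannot be severed; the surviving count is then exactly $2n-\kappa^+-\min\{\delta^+-\kappa^+,n-1\}+\sum\alpha_j+\sum\beta_i$ or $2n-\delta^++\sum\alpha_j+\sum\beta_i$. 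The careful point here is to confirm that all these moves can be realized \emph{simultaneously} on a single diagram and that the $\sum\alpha_j+\sum\beta_i$ contribution of the antiparallel blocks survives the cycle reductions intact, which mirrors the simultaneous accounting of $r^+(D)$, $r^-(D)$ and the $\min$-terms in Propositions \ref{P5.4} and \ref{P5.6}.

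The main obstacle is the exceptional family of (\ref{MT1e4}). There every construction I can produce still carries one extra Seifert circle beyond $\b_0(\L)$, so I can only certify $\b(\L)\le\b_0(\L)+1$. Since the MFW inequality is known not to be sharp in general, the failure to reach $\b_0(\L)$ circles does not by itself show the braid index exceeds $\b_0(\L)$, and the genuine difficulty is precisely that the Seifert-circle method cannot decide between $\b_0(\L)$ and $\b_0(\L)+1$. That the gap is real for at least some members is shown by Example \ref{Ex1}(iii): passing to the parallel double $\mathbb{D}$ and computing $\b_0(\mathbb{D})=7$, together with $\b(\mathbb{D})\le 2\b(\L)$, forces $\b(\L)=\b_0(\L)+1=4$, which is the numerical evidence behind the conjecture announced in the abstract.
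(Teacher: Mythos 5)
Your overall strategy --- Yamada's theorem plus explicit circle-reducing constructions, organized along the same case table used for the lower bound --- is exactly the paper's, and your accounting for Formulas (\ref{MT1e3}), (\ref{MT1e4}), (\ref{MT1e5}), (\ref{MT1e7}), (\ref{MT2e1}) and (\ref{MT2e2}) matches the paper's proof: A-moves to absorb lone crossings into $\alpha_j$ strips, MP moves around the main cycle capped at $n-1$, and the observation that for (\ref{MT1e4}) one lands exactly one circle above $\b_0(\L)$.

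However, there is a genuine gap at the hardest case, Case (i) of Formula (\ref{MT1e2}): $\L\in P_3(\mu_1;-(\mu_1+1)\vert 2\alpha_1,\ldots,2\alpha_{\kappa^+};0)$ with $\mu_1>1$ and $\alpha_{\kappa^+}>\mu_1$ (Case (ii) of (\ref{MT1e1}) is reduced by the paper to this case, so it inherits the gap). Here the toolkit you describe cannot reach the bound: since $\mu_1>1$ and $\nu_1>1$, the main cycle has no lone crossings, hence no MP moves on the cycle, no A-moves, and no N-moves ($\delta^+=0$); the standard diagram has $2-\kappa^++2\sum\alpha_j$ Seifert circles and the only available reductions are the $-\kappa^++\sum\alpha_j$ internal MP moves of the $\alpha_j$ strips, which leave $2+\sum\alpha_j$ circles --- one more than the required $1+\sum\alpha_j$. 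Your claim that the two-strip families ``reduce after one or two N-, A- or MP-moves to a connected sum of torus links or to a Type B / Type M2 alternating diagram whose minimal Seifert-circle count is already pinned down'' fails here, and cannot be repaired by reduction to an alternating diagram even in principle, because the alternating counterpart has strictly \emph{larger} braid index (Example \ref{Ex1}(iv): a link in $P_3(2;-3\vert 6;0)$ has braid index $4$ while its alternating counterpart has braid index $5$); nor is this one of the cases the paper settles by citing \cite[Theorem 1.4]{Diao2024}, which covers only the subfamilies that collapse to $P_2$ links. The paper closes this case with a construction not in your list (Figures \ref{MT1e2i_fig} and \ref{MT1e2i_fig2}): the $\mu_1$ and $\nu_1=\mu_1+1$ strips are merged, by re-routing strands, into a single long Seifert circle, and one then verifies that $-\kappa^++\sum\alpha_j$ ``special moves'' compatible with this long circle can still be performed. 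This is precisely where the hypothesis $\alpha_{\kappa^+}>\mu_1$ enters, and its failure for $1<\alpha_{\kappa^+}\le\mu_1$ is exactly what leaves (\ref{MT1e4}) unsettled --- so any correct proof must isolate and use this hypothesis, which your argument never does. A smaller instance of the same problem: in Cases (iv) and (v) of (\ref{MT1e2}) and Case (i) of (\ref{MT1e6}), a bare N-move is not enough; it must be followed by a re-routing of the bottom (respectively top) long strand that saves one further circle while preserving all local MP moves, a construction your proposal does not supply.
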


\begin{proof}
Let $\L\in P_3(\mu_1,\ldots,\mu_{\rho^+};-\nu_1,\ldots,-\nu_{\rho^-}\vert 2\alpha_1,\ldots, 2\alpha_{\kappa^+};-2\beta_1,\ldots,-2\beta_{\kappa^-})$ and $D$ a standard diagram of $\L$. Let $\rho^++\rho^-=2n$ be the length of the main cycle of Seifert circles in $D$. Keep in mind that we only need to prove the theorem for the cases where $\nu_i>1$ for $1\le i\le \rho^-$ when $\rho^->0$. For such Type 3 pretzel links, we can always perform $\min\{\delta^+,n-1\}-\kappa^++\sum \alpha_j$ MP reduction moves involving positive lone crossings and $-\kappa^-+\sum \beta_i$ MP reduction moves involving negative lone crossings. It follows that $2n-\min\{\delta^+,n-1\}+\sum \alpha_j+\sum \beta_i$ is a braid index upper bound of $\L$ since $s(D)=2n-\kappa^++2\sum \alpha_j-\kappa^-+2\sum \beta_i$. However, in many cases, this upper bound needs to be improved since it is larger than $\b_0(\L)$. In the following proofs, we shall identify the extra Seifert circle reduction moves so that the resulting diagram will have $\b_0(\L)$ Seifert circles.
We shall go through the formula list in Theorems \ref{MT1} and \ref{MT2}.

\medskip\noindent
Formula (\ref{MT1e0}): This is obvious.

\medskip\noindent
Formula (\ref{MT1e1}): Case (i) $\kappa^-=0, \kappa^+=1, \mu_1=1, \nu_1=3, \alpha_1=1$. In this case $\L$ is the unknot hence $\b(\L)=1=\alpha_1$.

\medskip
Case (ii) $\kappa^-=0, \kappa^+>1, \mu_1=1, \nu_1=3, \alpha_{\kappa^+}=1, \alpha_{\kappa^+-1}>2$. $\L\in P_3(2;-3\vert 2\alpha_1,\ldots,2\alpha_{\kappa^+-1};0)$ in this case. Thus the result follows from Case (i) of Formula (\ref{MT1e2}).

\medskip
Case (iii) $\kappa^-=0, \kappa^+>1, \mu_1=1, \nu_1=2, \alpha_{\kappa^+}>1$. In this case $\L\in P_2(2\alpha_1,\ldots,2\alpha_{\kappa^+};-2)$ and the result follows from \cite[Theorem 1.4, formula (7)]{Diao2024}.

\medskip\noindent
Formula (\ref{MT1e2}): Case (i) $\kappa^-=0,  \kappa^+>0, \mu_1>1, \nu_1=\mu_1+1, \alpha_{\kappa^+}>\mu_1$. 
Figure \ref{MT1e2i_fig} illustrates how the upper bound $1+\sum \alpha_j$ is achieved using $\L\in P_3(3;-4\vert 8,8,8;0)$ as an example. Notice that we can always place the $\mu_1$ strip at the left side of the diagram, and in general the $\mu_1$ and $\nu_1$ strips may be separated by $\alpha_j$ strips. The sequence of moves shown from picture 2 to 5 in Figure \ref{MT1e2i_fig} creates a long Seifert circle and reduces the number of Seifert circles by one. Two sets of compatible special moves are shown in Figure \ref{MT1e2i_fig} and the last set of special moves are left to the reader. 

\begin{figure}[htb!]
\includegraphics[scale=.55]{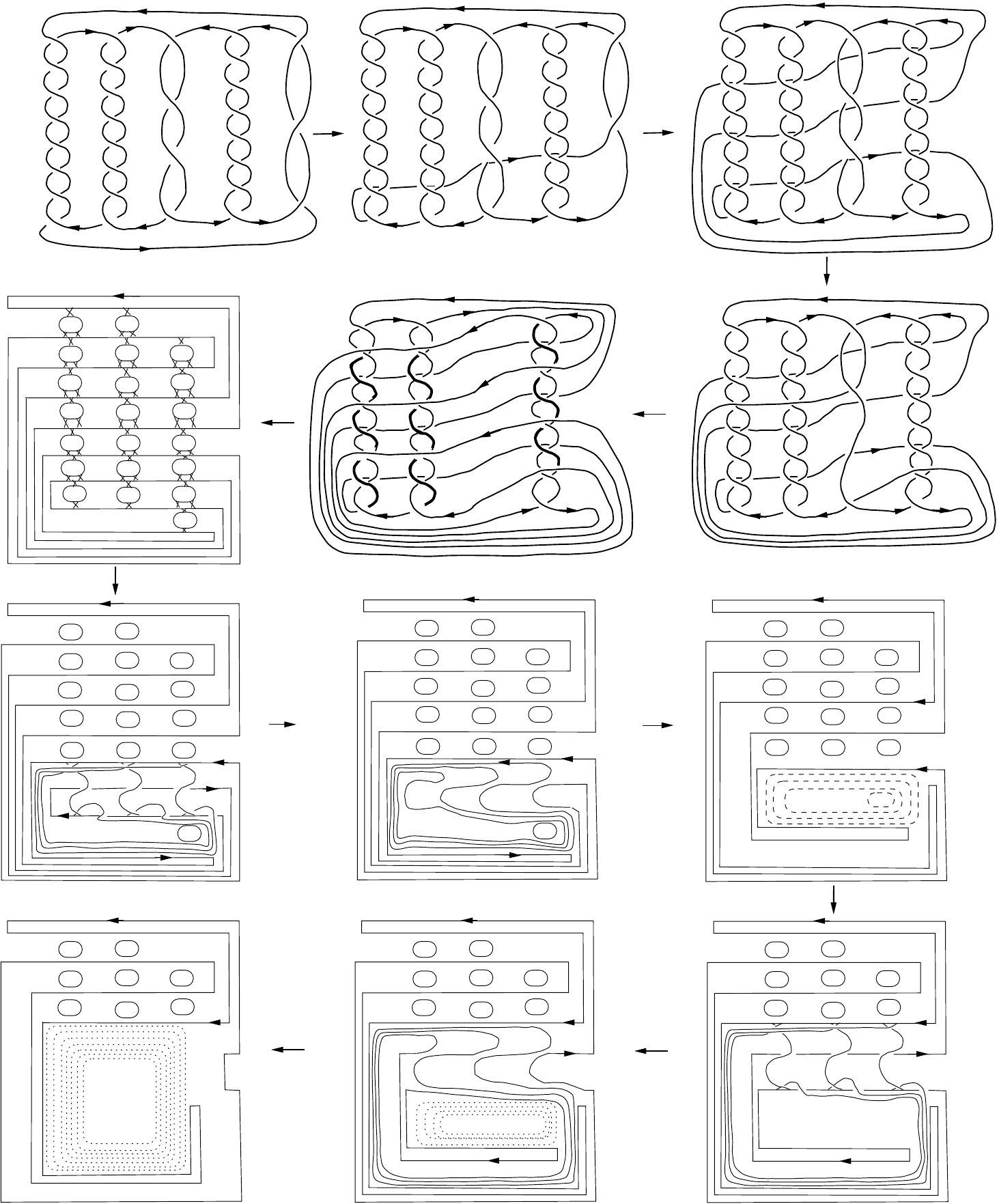}
\caption{The over-strands where the special moves can be made are marked by thickened curves in the fifth diagram. The sixth diagram shows the Seifert circle decomposition before the special moves are made. The first three compatible special moves are shown in the seventh diagram. Crossings between Seifert circles are mostly omitted since they do not affect the Seifert circle decomposition when the strands are re-routed in the same directions of the local braids.  } 
\label{MT1e2i_fig} 
\end{figure}

\medskip
Notice that the number of special moves we can make equals the number of MP-moves we can make on the original diagram, which equals $-\kappa^++\sum \alpha_j$. Thus $\L$ can be represented by a diagram with $1-\kappa^++2\sum \alpha_j-(-\kappa^++\sum \alpha_j)=1+\sum \alpha_j$ Seifert circles. Notice that this procedure can always be carried out. In general, if $D$ is a standard diagram of $\L\in P_3(\mu_1;-(\mu_1+1)\vert 2\alpha_1,\ldots,2\alpha_{\kappa^+};0)$ with $\alpha_{\kappa^+}>\mu_1$, then we can always change the $\mu_1$ and $\nu_1$ strips into a long Seifert circle as shown in Figure \ref{MT1e2i_fig2} for the case of $\mu_1=5$. Where the thin curves indicate the re-routed under-strands from the $\mu_1$ strip and the thick curve indicate the re-routed over-strands from the $-\nu_1=-(\mu_1+1)$ strip. The boxes with shadows indicate where the $\alpha_j$ strips can be placed. Figure \ref{MT1e2i_fig2} is a demonstration of case from $P_3(5;-6\vert 16,14,12,12;0)$. Notice that this process can be carried out regardless the positions of the $\alpha_j$ strips.

\begin{figure}[htb!]
\includegraphics[scale=.8]{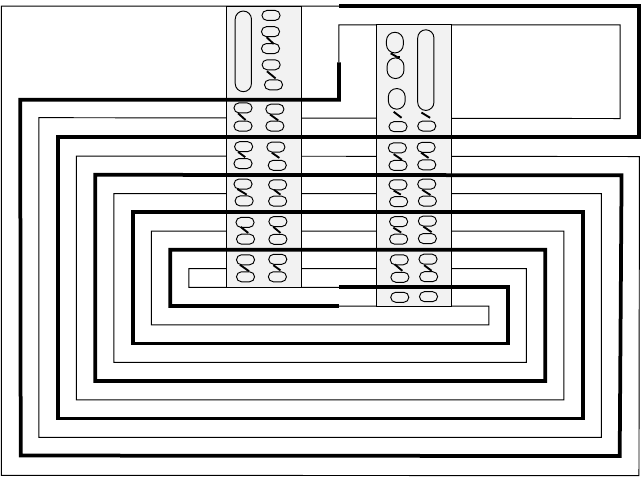}
\caption{The structure of the Seifert circle decomposition of a diagram $D$ of $\L\in P_3(5;-6\vert 16,14,12,12;0)$ after its $\mu_1$ and $\nu_1$ strips are combined into a long Seifert circle. In this case the strips in the diagram are ordered as follows:  $12$, $16$, $-6$, $14$, $12$, $5$. The over-strands that can be used for the special moves (or MP moves) are highlighted by a single line segment between the two small Seifert circles.} 
\label{MT1e2i_fig2} 
\end{figure}

\medskip
Case (ii) $\kappa^-=0,  \kappa^+>1, \mu_1=1, \nu_1=3, \alpha_{\kappa^+}=\alpha_{\kappa^+-1}=1$. $\L\in P_3(2;-3\vert 2\alpha_1,\ldots,2\alpha_{\kappa^+-2},2;0)$ in this case. A standard diagram $D$ with these parameters has 

$$s(D)=2+2\sum_{1\le j\le \kappa^+-1}\alpha_j-(\kappa^+-1)=1-\kappa^++2\sum \alpha_j$$
(keep in mind that $\alpha_{\kappa^+}=\alpha_{\kappa^+-1}=1$) with $-\kappa^++\sum \alpha_j$ MP-moves. Thus $D$ can be realized by a diagram with $1+\sum \alpha_j$ Seifert circles.

\medskip
Case (iii) $\kappa^-=0,  \kappa^+>0, \mu_1=1, \nu_1=2, \alpha_{\kappa^+}=1$. In this case $\L\in P_2(2\alpha_1,\ldots,2\alpha_{\kappa^+-1},2;-2)$ and the result follows from \cite[Theorem 1.4, formula (8)]{Diao2024}.

\medskip
Case (iv) $\kappa^-=0,  \kappa^+>0, \mu_1=1, \nu_1=3, \alpha_{\kappa^+}>1$. The argument here is similar to Case (v) below.

\medskip
Case (v) $\kappa^-=0,  \kappa^+>0, \mu_1=1, \nu_1\ge 4$. Let $D$ be a standard diagram of $\L$. In this case an N-move followed by a re-routing of the bottom strand of $D$ results in a diagram $\hat{D}$ such that $s(\hat{D})=s(D)-1=1-\kappa^++2\sum \alpha_j$, while keeping the number of local MP moves unchanged, that is, $r^+(D)=-\kappa^++\sum\alpha_j$. See Figure \ref{MT1e2v_fig} for an illustration of this process. It follows that $\L$ can be represented by a diagram with $1+\sum\alpha_j$ Seifert circles.

\begin{figure}[htb!]
\includegraphics[scale=.6]{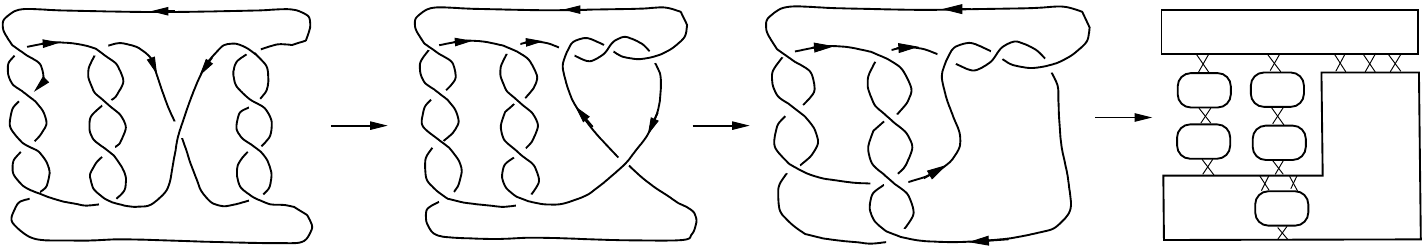}
\caption{The N-move followed by a re-routing of the bottom long strand reduces the number of Seifert circles by one, while keeping the number of local MP moves unchanged.} 
\label{MT1e2v_fig} 
\end{figure}

\medskip\noindent
Formula (\ref{MT1e3}): In all three cases here, we observe that if $D$ is a standard diagram of $\L$, then $s(D)=2-\kappa^++2\sum \alpha_j$ and $D$ has $-\kappa^++\sum \alpha_j$ MP-moves. Hence $\L$ can be represented by a diagram with $2+\sum\alpha_j$ Seifert circles.

\medskip\noindent
Formula (\ref{MT1e4}): In each case, the larger of the two numbers are given by the same argument for Formula (\ref{MT1e3}), while the smaller number is given by $\b_0(\L)$.

\medskip\noindent
Formula (\ref{MT1e5}): $\rho^+=\delta^+=2$, $\rho^-=0$, $\kappa^+\ge 2$. The given condition allows us to perform two A-moves on a standard diagram of $\L$, resulting in a diagram $\hat{D}$ with $s(\hat{D})=-\kappa^+-\kappa^-+2\sum \alpha_j+2\sum \beta_i$ and $-\kappa^+-\kappa^-+\sum \alpha_j+\sum \beta_i$ MP-moves, and the result follows. See Figure \ref{MT1e5_fig} for an example.

\begin{figure}[htb!]
\includegraphics[scale=.6]{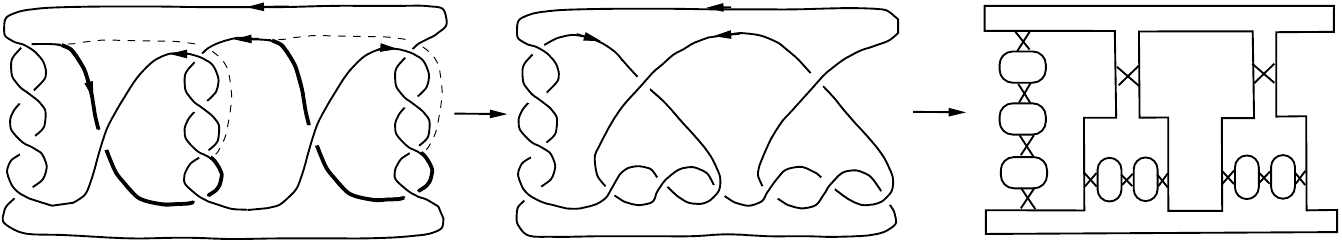}
\caption{Two A-moves on a diagram of $\L\in P_3(1,1;0\vert 4,4,4;0)$ result in a diagram with 9 Seifert circles which still allow $-\kappa^++\sum\alpha_j=3$ MP-moves. Thus $\L$ can be represented by a diagram with $\sum\alpha_j=6$ Seifert circles.} 
\label{MT1e5_fig} 
\end{figure}

\medskip\noindent
Formula (\ref{MT1e6}): 
Case (i) $\kappa^+>0,  \kappa^->0, \mu_1=1, \nu_1\ge 2$. Using flypes we can assume that  the $\mu_1$ and $\nu_1$ strips are adjacent. An N-move changes $D$ to a non alternating Type M2 diagram $\hat{D}$ as shown in Figure \ref{MT1e6_fig} in the middle. Since we can always find an adjacent pair of $\alpha_j$ and $\beta_i$ strips, the re-routing as shown in Figure \ref{MT1e6_fig} on the right is always possible. Notice that in the resulting diagram we have $1-\kappa^+-\kappa^-+2\sum\alpha_j+2\sum\beta_i$ Seifert circles, while the total number of MP moves remain unchanged, which is $-\kappa^+-\kappa^-+\sum\alpha_j+\sum\beta_i$. It follows that $\L$ can be represented by a diagram with $1+\sum\alpha_j+\sum\beta_i$ Seifert circles.

\begin{figure}[htb!]
\includegraphics[scale=.6]{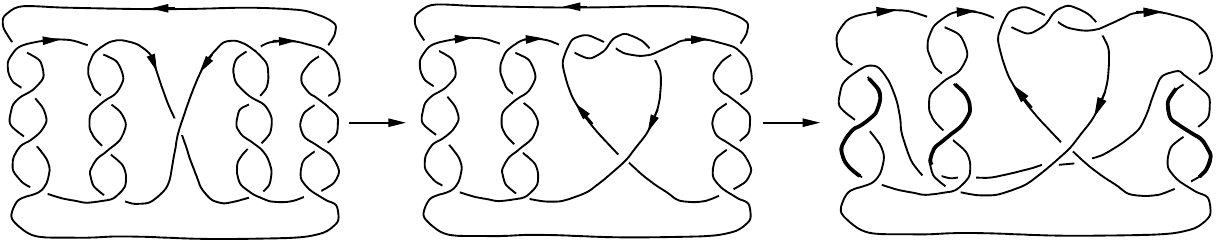}
\caption{The N-move following a re-routing of the top long strand reduces the number of Seifert circles by one, while keeping the number of local MP moves unchanged.} 
\label{MT1e6_fig} 
\end{figure}

Cases (ii) and (iii) $\rho^+=2$, $\rho^-=0$, $\delta^+=1$, $\kappa^+\ge 1$ or $\delta^+\ge 1$, $\kappa^+=1$. This is similar to the case of Formula (\ref{MT1e5}), but the given condition only allows us to perform one A-move, which gives us a diagram $\hat{D}$ with $s(\hat{D})=1-\kappa^+-\kappa^-+2\sum \alpha_j+2\sum \beta_i$ and $-\kappa^+-\kappa^-+\sum \alpha_j+\sum \beta_i$ MP-moves, and the result follows.

\medskip\noindent
Formula (\ref{MT1e7}): $2+\sum\alpha_j+\sum\beta_i$ is a braid index upper bound for all cases since a standard diagram always have $2-\kappa^+-\kappa^-+2\sum\alpha_j+2\sum\beta_i$ Seifert circles with $-\kappa^+-\kappa^-+\sum\alpha_j+\sum\beta_i$ MP-moves.

\medskip\noindent
Formula (\ref{MT2e1}): We only need to consider the case $\delta^+>\rho^-+\kappa^+$ as the other case is the mirror image of this. The given condition allows us to perform $\kappa^+$ A-moves (see Figure \ref{alpha_sigma_reduction}) without affecting the number of MP-moves. In the resulting diagram $\hat{D}$, we still have $\delta^+-\kappa^+$ lone crossings in the main cycle of Seifert circles (whose length of $2n$ remains unchanged). Thus we can still perform $\min\{\delta^+-\kappa^+,n-1\}$ MP-moves on these lone crossings. The A-moves have reduced the contribution of each $\alpha_j$ strip to the total number of Seifert circles by one, namely from $2\alpha_j-1$ to $2\alpha_j-2$, while we can still perform $\alpha_j-1$ MP-moves on the crossings in the remaining $\alpha_j$ strip. That is, we have $s(\hat{D})=2n-2\kappa^+-\kappa^-+2\sum\alpha_j+2\sum\beta_i$, and $\hat{D}$ has a total of $\min\{\delta^+-\kappa^+,n-1\}-\kappa^+-\kappa^-+\sum\alpha_j+\sum\beta_i$ MP-moves. Thus $\L$ can be represented by a diagram with $2n-\kappa^+-\min\{\delta^+-\kappa^+,n-1\}+\sum\alpha_j+\sum\beta_i$ Seifert circles.

\medskip\noindent
Formula (\ref{MT2e2}): If $\delta^+\le \kappa^+$, then we can perform $\delta^+$ A-moves and there are no more lone crossings left on the main cycle of Seifert circles in the resulting diagram, and the result follows from a similar calculation in the case of Formula (\ref{MT2e1}). If $\delta^+> \kappa^+$, then we can repeat the argument used for Formula (\ref{MT2e1}) to obtain the same expression. Since $\delta^+-\kappa^+<\rho^-$, we have $2(\delta^+-\kappa^+)<\delta^++\rho^--\kappa^+\le 2n-\kappa^+$ (since $\delta^+\le \rho^+$ and $\rho^++\rho^-=2n$), hence $\delta^+-\kappa^+\le n-1$ so $2n-\kappa^+-\min\{\delta^+-\kappa^+,n-1\}=2n-\delta^+$. This completes the proof of Theorem \ref{Type3UpboundTheorem}.
\end{proof}

\section{Further discussions}\label{end_sec}

We would like to spend this last section to further discuss the unsettled cases, namely the exceptional cases listed in Formula (\ref{MT1e4}). As we have shown in Examples \ref{Ex1}(iii), the link in $P_3(2;-3\vert 4;0)$ represented a case where $\b_0(\L)$ is strictly smaller than $\b(\L)$. By computing $\b_0$ for their corresponding parallel doubles, we are able to find a few more such examples, which we list below.

\medskip
\begin{examples}{\em 
Let $D$ be a standard diagram of $\L$, where $\L$ is one of the Type 3 pretzel links listed below, and let $\mathbb{D}$ be the parallel double of $D$, then we have $\b_0(\mathbb{D})=2\b_0(\L)+1$, it follows that $\b(\L)= 1+\b_0(\L)=2+\sum \alpha_j$ for each $\L$ in the list below.

\smallskip\noindent
(i) $\L\in P_3(2;-3\vert 4,4;0)$, $\b_0(\mathbb{D})=11$ and $\b(\L)=6$;

\smallskip\noindent
(ii) $\L\in P_3(2;-3\vert 4,4,4;0)$, $\b_0(\mathbb{D})=15$ and $\b(\L)=8$;

\smallskip\noindent
(iii) $\L\in P_3(2;-3\vert 4,6;0)$, $\b_0(\mathbb{D})=13$ and $\b(\L)=7$;

\smallskip\noindent
(iv) $\L\in P_3(4;-5\vert 4;0)$, $\b_0(\mathbb{D})=7$ and $\b(\L)=4$;

\smallskip\noindent
(v) $\L\in P_3(4;-5\vert 4,4;0)$, $\b_0(\mathbb{D})=11$ and $\b(\L)=6$.}
\end{examples}

\medskip
However, we should point out that this approach did not get us very far. For all examples that we have tried in which $\min\{\alpha_j\}\ge 3$, we have found that $\b_0(\mathbb{D})=2\b_0(\L)$ regardless of the $\mu_1$ value, which does not help us. On the other hand, if $D$ is a standard diagram and $\mathcal{D}$ is the parallel triple of $D$, a direct computation shows $\b(\L)=2+\sum\alpha_j$ for the list below:

\smallskip\noindent
(i) $\L\in P_3(3;-4\vert 6;0)$, $\b_0(\mathcal{D})=13$ and $\b(\L)=5$;

\smallskip\noindent
(ii) $\L\in P_3(4;-5\vert 6;0)$, $\b_0(\mathcal{D})=13$ and $\b(\L)=5$;

\smallskip\noindent
(iii) $\L\in P_3(3;-4\vert 6,6;0)$, $\b_0(\mathcal{D})=22$ and $\b(\L)=8$;

\medskip
Similar to the cases of the parallel doubles, for all examples in which we have used the parallel triples $\mathcal{D}$ with $\min\{\alpha_j\}\ge 4$, we have found that $\b_0(\mathcal{D})=3\b_0(\L)$ regardless of the $\mu_1$ value, which does not help us. One can speculate that for cases with $\min\{\alpha_j\}= 4$ a parallel quadruple might lead us to $\b(\L)=2+\sum \alpha_j$, however this approach become computationally prohibitive.  These computational results, together with how we prove Case (i) of Formula (\ref{MT1e2}), lead us to the following conjecture, which we use to end our comprehensive study of the braid indices of pretzel links.

\medskip
\begin{conjecture}{\em
If $\L\in P_3(\mu_1;-(\mu_1+1)\vert 2\alpha_1,\ldots,2\alpha_{\kappa^+};0)$ where $\mu_1>1$ and $1<\min\{\alpha_j\}\le \mu_1$, then 
 $\b(\L)=2+\sum\alpha_j$. If $\L\in P_3(1;-3\vert 2\alpha_1,\ldots,2\alpha_{\kappa^+-2},4,2;0)$ (with $\alpha_j\ge 2$, $1\le j\le \kappa^+-2$), then
 $\b(\L)=1+\sum\alpha_j$. Similarly, we conjecture that if $\L\in P_3(\nu_1+1;-\nu_1\vert 0;-2\beta_1,\ldots,-2\beta_{\kappa^-})$ where $\nu_1>1$ and $1<\min\{\beta_i\}\le \nu_1$, then 
 $\b(\L)=2+\sum\beta_i$, and if $\L\in P_3(3;-1\vert 0;-2\beta_1,\ldots,-2\beta_{\kappa^--2},-4,-2;0)$ (with $\beta_i\ge 2$, $1\le i\le \kappa^--2$), then
 $\b(\L)=1+\sum\beta_i$.}
\end{conjecture}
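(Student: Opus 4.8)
The plan is to confirm the conjectured value together with the one genuinely hard ingredient: a lower bound that the Morton--Frank--Williams inequality cannot supply on its own. The upper bound $\b(\L)\le 2+\sum\alpha_j$ is already available from the argument for Formula (\ref{MT1e3}) in the proof of Theorem \ref{Type3UpboundTheorem}, and the MFW bound is $\b_0(\L)=1+\sum\alpha_j$ (the smaller of the two values in Formula (\ref{MT1e4})). Hence the whole content is to show $\b(\L)>\b_0(\L)$, which then forces $\b(\L)=\b_0(\L)+1=2+\sum\alpha_j$. Before doing any work I would reduce the four assertions to the first: the third and fourth follow from the first and second by the mirror-image relations of Remark \ref{remark2.2}(ii), and the second follows from the first through the identity $P_3(1;-3\vert 2\alpha_1,\ldots,2\alpha_{\kappa^+-1},2;0)=P_3(2;-3\vert 2\alpha_1,\ldots,2\alpha_{\kappa^+-1};0)$ recorded in Remark \ref{remark1.3}. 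So only $\L\in P_3(\mu_1;-(\mu_1+1)\vert 2\alpha_1,\ldots,2\alpha_{\kappa^+};0)$ with $1<m:=\min_j\alpha_j\le\mu_1$ needs attention.

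The lower bound would be obtained by cabling, exactly as in Example \ref{Ex1}(iii) but with the cable size adapted to the geometry. Let $D$ be the standard diagram and let $D^{(m)}$ be its parallel $m$-cable with the blackboard framing inherited from $D$, where $m=\min_j\alpha_j$. The two soft inequalities $\b_0(D^{(m)})\le\b(D^{(m)})$ (MFW applied to the cable) and $\b(D^{(m)})\le m\,\b(\L)$ (any $m$-strand parallel cable of a braid on $\b(\L)$ strands, in any framing, is a braid on $m\,\b(\L)$ strands, since framing twists are themselves braid words) hold unconditionally. It therefore suffices to prove the single arithmetic fact
\begin{equation}
\b_0(D^{(m)})\ \ge\ m\,\b_0(\L)+1. \label{cableclaim}
\end{equation}
Granting (\ref{cableclaim}), the chain $m\,\b_0(\L)+1\le\b(D^{(m)})\le m\,\b(\L)$ gives $\b(\L)\ge\b_0(\L)+1/m$, and since $m\ge 2$ while $\b(\L)$ and $\b_0(\L)$ are integers, this forces $\b(\L)\ge\b_0(\L)+1$, as desired. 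The numerical data in Section \ref{end_sec} is precisely the verification of (\ref{cableclaim}) in small cases ($m=2$ via $\mathbb{D}$, $m=3$ via $\mathcal{D}$), so the conjecture amounts to establishing (\ref{cableclaim}) uniformly.

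To prove (\ref{cableclaim}) I would extend the $E$/$e$ bookkeeping of Sections \ref{Type3basic_case}--\ref{Type3S3} from $D$ to $D^{(m)}$. After Seifert's algorithm the $m$-cable again has the structure of a main cycle, but each elementary strip is now an $m$-cabled $(2,k)$ torus tangle; running the VP$^+$/VP$^-$ recursions along the cable reduces the HOMFLY-PT computation to connected sums of the cabled building blocks $T_p(\mu_1,2)$, $T_p(-(\mu_1+1),2)$ and $T_o(2\alpha_j,2)$, whose $m$-cables are again torus links of type $(2m,\cdot)$. Their exact $a$-spans and extremal $z$-coefficients are in principle computable through the Hecke-algebra/full-twist eigenvalue description of the HOMFLY-PT polynomial of torus links, and substituting these into the recursions should yield $E(D^{(m)})$ and $e(D^{(m)})$ in closed form. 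The point of the hypothesis $m\le\mu_1$ is that it is exactly the threshold at which the near-cancellation between the $\mu_1$ and $(\mu_1+1)$ strips (the feature responsible for the exceptional Formula (\ref{MT1e4})) stops propagating through the cable: when $m\le\mu_1$ the extremal coefficient of $D^{(m)}$ survives and lengthens the $a$-span by the extra unit recorded in (\ref{cableclaim}), whereas when $m>\mu_1$ the long-Seifert-circle reduction of Figure \ref{MT1e2i_fig} collapses the span and the cable detects nothing.

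The main obstacle will be controlling the cancellations in the extremal coefficient polynomials $p_0^h(D^{(m)})$ and $p_0^\ell(D^{(m)})$. In the uncabled computations of Sections \ref{Type3basic_case}--\ref{Type3S3} these coefficients are monomials lying in $\pm F$, so their non-vanishing is automatic; for the cable they become genuine Laurent polynomials arising from the Hecke-algebra expansion, and a single accidental cancellation at the top or bottom $a$-degree would drop the span and break (\ref{cableclaim}). Proving that no such cancellation occurs simultaneously for all $\mu_1$, all $\kappa^+$, and all admissible $\{\alpha_j\}$---equivalently, pinning down precisely where the behavior flips between $m\le\mu_1$ and $m>\mu_1$---is the crux, and it is exactly what the case-by-case computations of Section \ref{end_sec} leave unresolved. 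An alternative that would bypass cabling altogether is to produce a sharper intrinsic lower bound for $\b(\L)$ (for instance from a Bennequin-type or categorified invariant), but such tools lie outside the HOMFLY-PT framework developed here.
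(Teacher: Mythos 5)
The statement you are addressing is the paper's closing \emph{conjecture}: the paper itself offers no proof of it, only computational evidence (parallel doubles for $\min\{\alpha_j\}=2$, parallel triples for $\min\{\alpha_j\}=3$) together with the explicit admission that the approach becomes computationally prohibitive beyond that. Your proposal is likewise not a proof, and you say so yourself. The parts of your write-up that are solid are the parts already in place in the paper: the upper bound $\b(\L)\le 2+\sum\alpha_j$ from the proof of Theorem \ref{Type3UpboundTheorem}, the value $\b_0(\L)=1+\sum\alpha_j$ coming from Proposition \ref{P4.1} via formula (\ref{MT1e4}), the reduction of the four assertions to the first one (mirror images via Remark \ref{remark2.2}(ii), plus the identity recorded in Remark \ref{remark1.3}), and the arithmetic by which the cable inequality $\b_0(D^{(m)})\ge m\,\b_0(\L)+1$, combined with $\b_0(D^{(m)})\le \b(D^{(m)})\le m\,\b(\L)$ and integrality, would force $\b(\L)=\b_0(\L)+1$. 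All of this is correct, and for $m=2$ it is precisely the argument of Example \ref{Ex1}(iii).

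The genuine gap is the key inequality $\b_0(D^{(m)})\ge m\,\b_0(\L)+1$ itself, which you do not prove and which is essentially as hard as the conjecture: it asserts that no cancellation occurs at the extremal $a$-degrees of the HOMFLY-PT polynomial of the $m$-cable, so that its MFW bound strictly exceeds $m$ times that of $\L$. Your sketch --- run the VP$^+$/VP$^-$ recursions on the cabled diagram and control extremal coefficients via Hecke-algebra formulas for cabled torus links --- is a research program rather than an argument; you concede that ruling out cancellation simultaneously for all $\mu_1$, all $\kappa^+$ and all admissible $\{\alpha_j\}$ is unresolved. Note moreover that the paper's machinery does not transfer routinely: every induction in Sections \ref{Type3basic_case}--\ref{Type3S3} rests on the fact that the building blocks are $(2,k)$-torus tangles whose extremal coefficients are monomials lying in $\pm F$, so non-vanishing is automatic; after cabling, the blocks are $(2m,\cdot)$-type pieces whose extremal coefficients are genuine polynomials, and the sign-coherence that drives every comparison step is exactly what is lost. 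Your reformulation of the conjecture as this single cable inequality, with $m=\min_j\alpha_j$, is a useful packaging that matches the paper's own speculation about higher parallels, and your heuristic for the threshold $\min\{\alpha_j\}\le\mu_1$ (beyond which Case (i) of formula (\ref{MT1e2}) shows MFW is sharp and there is nothing to detect) is consistent with the paper's data; but neither you nor the paper proves the statement, so what you have is a plausible strategy with its central step missing.
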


\end{document}